\tikzset{node distance=2.5cm, auto}
\newcommand{\casikcuatro}{
\scalebox{0.7} 
{
\begin{pspicture}(0,-0.25)(0.5399996,0.25)
\psdots[dotsize=0.12](0.06,0.17)
\psdots[dotsize=0.12](0.4599996,0.17)
\psdots[dotsize=0.12](0.06,-0.17)
\psdots[dotsize=0.12](0.4599996,-0.17)
\psline[linewidth=0.04cm](0.06,0.15)(0.4399996,-0.15)
\psline[linewidth=0.04cm](0.4599996,0.13)(0.4599996,-0.13)
\psline[linewidth=0.04cm](0.08,0.17)(0.4199997,0.17)
\psline[linewidth=0.04cm](0.06,0.13)(0.06,-0.15)
\psline[linewidth=0.04cm](0.08,-0.17)(0.4199997,-0.17)
\end{pspicture}}}
\newcommand{\arbolitoizq}{\scalebox{0.7} 
{
\begin{pspicture}(0,-0.32)(0.44,0.32)
\psdots[dotsize=0.12](0.06,0.24)
\psdots[dotsize=0.12](0.06,0.0)
\psdots[dotsize=0.12](0.22,-0.24)
\psdots[dotsize=0.12](0.36,0.0)
\psline[linewidth=0.04cm](0.06,0.2)(0.06,0.02)
\psline[linewidth=0.04cm](0.08,-0.02)(0.2,-0.24)
\psline[linewidth=0.04cm](0.22,-0.26)(0.36,0.0)
\end{pspicture}
} }
\newcommand{\arbolitoder}{\scalebox{0.7}{\begin{pspicture}(0,-0.32)(0.44,0.32)
\psdots[dotsize=0.12](0.36,0.24)
\psdots[dotsize=0.12](0.36,0.0)
\psdots[dotsize=0.12](0.2,-0.24)
\psdots[dotsize=0.12](0.06,0.0)
\psline[linewidth=0.04cm](0.36,0.2)(0.36,0.02)
\psline[linewidth=0.04cm](0.34,-0.02)(0.22,-0.24)
\psline[linewidth=0.04cm](0.2,-0.26)(0.06,0.0)
\end{pspicture} 
}}
\newcommand{\ldos}{\scalebox{0.7} 
{
\begin{pspicture}(0,-0.2)(0.14,0.2)
\psdots[dotsize=0.12](0.06,0.12)
\psdots[dotsize=0.12](0.06,-0.12)
\psline[linewidth=0.04cm](0.06,0.08)(0.06,-0.1)
\end{pspicture}
} }
\newcommand{\ltres}{\scalebox{0.7} 
{
\begin{pspicture}(0,-0.32)(0.16,0.3)
\psdots[dotsize=0.12](0.08,0.0)
\psdots[dotsize=0.12](0.08,-0.24)
\psline[linewidth=0.04cm](0.08,-0.04)(0.08,-0.22)
\psdots[dotsize=0.12](0.08,0.22)
\psline[linewidth=0.04cm](0.08,0.22)(0.08,0.04)
\end{pspicture}
} }
\newcommand{\cdos}{\scalebox{0.7} 
{
\begin{pspicture}(0,-0.21)(0.46,0.19)
\psdots[dotsize=0.12](0.38,0.11)
\psdots[dotsize=0.12](0.08,0.11)
\psline[linewidth=0.04cm](0.36,0.09)(0.24,-0.13)
\psline[linewidth=0.04cm](0.22,-0.15)(0.08,0.11)
\psdots[dotsize=0.12](0.22,-0.13)
\end{pspicture}
} }
\newcommand{\Bc}{\mathscr{B}_c}
\newcommand{\PP}{\mathbb P}
\newcommand{\Eb}{E^{\bullet}}
\theoremstyle{plain} \theoremstyle{remark}
\newtheorem{theo}{\textbf{Theorem}}
\newtheorem{remark}{ \textbf{Remark}}
\newtheorem{defi}{ \textbf{Definition}}
\newtheorem{coro}{\textbf{Corollary}}
\newtheorem{prop}{ \textbf{Proposition}}
\newtheorem{ex}{\textbf{Example}}
\newcommand{\Iv}{\mathrm{Iv}}
\newcommand{\Ep}{E^{\bullet}}
\newcommand{\lacito}{\scalebox{0.7} 
{\begin{pspicture}(0,-0.2641111)(0.76822263,0.2641111)
\psline[linewidth=0.04cm](0.064580314,0.1884375)(0.34458032,0.0284375)
\psline[linewidth=0.04cm](0.064580314,-0.1915625)(0.34458032,-0.0315625)
\psline[linewidth=0.04cm](0.40458032,0.0484375)(0.6845803,0.1884375)
\psline[linewidth=0.04cm](0.4245803,-0.0315625)(0.6845803,-0.2115625)
\psdots[dotsize=0.12](0.6941115,0.19)
\psdots[dotsize=0.12](0.07411111,-0.19)
\psdots[dotsize=0.12](0.074111514,0.19)
\psdots[dotsize=0.12](0.6941111,-0.19)
\psline[linewidth=0.04cm](0.064580314,0.1484375)(0.064580314,-0.1515625)
\psline[linewidth=0.04cm](0.6941115,0.15)(0.6845803,-0.2115625)
\psdots[dotsize=0.22093812,linecolor=blue](0.37411126,-0.01)
\end{pspicture}}}
\newcommand{\triangulito}{
\scalebox{0.7} 
{\begin{pspicture}(0,-0.2641111)(0.4845803,0.2641111)
\psline[linewidth=0.04cm](0.0504692,0.1884375)(0.3304692,0.0284375)
\psline[linewidth=0.04cm](0.0504692,-0.1915625)(0.3304692,-0.0315625)
\psdots[dotsize=0.12](0.06,-0.19)
\psdots[dotsize=0.12](0.0600004,0.19)
\psline[linewidth=0.04cm](0.0504692,0.1484375)(0.0504692,-0.1515625)
\psdots[dotsize=0.22093812,linecolor=blue](0.36000013,-0.01)
\end{pspicture}}}
\newcommand{\HCK}{\mathcal{H}_{{\mathrm CK}}}
\newcommand{\Gr}{\mathscr{G}_c}
\newcommand{\Grp}{\mathscr{G}^{\bullet}_c}
\newcommand{\Arb}{\mathscr{A}}
\newcommand{\BB}{\mathbb{B}}
\newcommand{\FF}{\mathbb{F}}
\newcommand{\KK}{\mathbb{K}}
\newcommand{\Tt}{\mathbb{T}}
\newcommand{\T}{\mathcal{T}}
\newcommand{\TT}{\mathscr{T}}
\newcommand{\admc}{\mathrm{Ac}}
\newcommand{\be}{\begin{equation}}
\newcommand{\eeq}{\end{equation}}
\newcommand{\CN}{\mathcal{N}}
\newcommand{\CH}{\mathcal{H}}
\newcommand{\CF}{\mathcal{F}}
\newcommand{\NA}{\mathbb{K}[t_{\alpha}:\alpha\in\mathbb{T}(M_{2^+})]}
\newcommand{\aut}{\mathrm{aut}}
\begin{document}
\title{An antipode formula for the natural Hopf algebra of a set operad.}

\author{Miguel A. M\'endez 
\\Departamento de Matem\'atica\\
\small \emph{Instituto Venezolano de Investigaciones
Cient\'\i ficas}\\
 \small \emph{A.P. 21827, Caracas 1020--A, Venezuela}\\
\small {email:mmendezenator@gmail.com}\\\small {phone:582125041412.}\\ and\\
\\Jean Carlos Liendo 
  \small \emph{Departamento de Matemática}\\ \emph{Facultad de Ciencias}\\ \emph{Universidad Central de Venezuela} \\
  \small \emph{Caracas 1020-Venezuela.}} \maketitle

\begin{abstract} A set-operad is a monoid in the category of combinatorial species with
respect to the operation of substitution. From a set-operad, we give here a simple construction of a Hopf algebra that we call {\em the natural Hopf algebra} of the operad.  We obtain a combinatorial formula for its antipode in terms of Shr\"oder trees,
generalizing the Hayman-Schmitt formula for the Faá di Bruno Hopf algebra. From there we derive more readable formulas for specific operads. The classical Lagrange inversion formula is obtained in this way from the set-operad of pointed sets. 
We also derive antipodes formulas for the natural Hopf algebra corresponding to the operads of connected graphs, the NAP operad, and for its generalization, the set-operad of trees enriched with a monoid. When the set operad  is left
cancellative, we can construct a family of posets. The natural Hopf algebra is then obtained as an incidence reduced Hopf algebra,
by taking a suitable equivalence relation over the intervals of that family of posets.
We also present a simple combinatorial construction of an epimorphism from the natural Hopf algebra corresponding to the NAP operad, to the Connes and Kreimer Hopf
algebra. \end{abstract}

Keywords: Hopf algebras, posets,  operads, species.
\section{Introduction}
Assembling and disassembling combinatorial structures was stated by Joni and Rota \cite{Joni-Rota} to be the paradigm for defining products and coproducts in combinatorial bialgebras. Set operads are families of combinatorial structures where this paradigm can be completely fulfilled. 

The operation of substitution of species formalizes the notion, present in many combinatorial constructions, of placing combinatorial  objects  `inside' other combinatorial objects.
Hence, the intuitive and informal description of a combinatorial species as a family of combinatorial structures that is closed by relabelling could be extended to describe a set operad. For this end it is appropriate to think of a set operad $M$ as a such family together with a self reproducing recipe $\eta:M(M)\rightarrow M$ that assembles a set of structures (pieces)  using an external structure (pattern) in order to obtain a bigger one. This recipe satisfies axioms of associativity and existence of identity. 
 
When the operad also satisfies the left cancellation law, the relation obtained by splitting sets of structures (assemblies) into smaller ones by the use of $\eta$ is a partial order. A set operad satisfying a left cancellation law is called a cancellative operad ($c$-operad). 

The construction of families of partially ordered sets from $c$-operads and the {\em natural incidence Hopf algebras}  associated to those families were introduced in \cite{PhdMendez} (see also \cite{MendezandYang, Bruno1, MendezKoszulduality, Bruno2}, for the construction of posets from $c$-operads, and \cite{Chapoton-Livernet} for a similar (independent) construction of incidence Hopf algebras from operads).
The intervals of these posets form an hereditary family in the sense of Schmitt \cite{Schmitt2}, and any Hopf relation on them  gives rise to a reduced incidence Hopf algebra. The natural relation $\sim$ gives rise to $\CN_M$, the natural reduced incidence Hopf algebra, and the coarser isomorphism relation 
$\equiv$ to the standard reduced incidence Hopf algebra $\CH_M$ \cite{Chapoton-Livernet}. The present approach gives at once an epimorhism from $\CN_M$ to  $\CH_M$. Since in \cite{Chapoton-Livernet}  the standard reduced incidence Hopf algebra associated to the NAP operad was proved to be isomorphic to the Connes and Kreimer Hopf algebra, we obtain an epimorphism $\CN_{\Arb}\rightarrow \HCK$. We give here a direct explicit combinatorial construction of it.

However, the requirement of the operad to be cancellative is not essential for the definition of the natural Hopf algebra. A similar general definition of Hopf algebras from algebraic (non-symmetric) cooperads was given in \cite{Vanderlaan}. See also \cite{Chapoton-Livernet} for an equivalent one as the Hopf algebra of functions of a group of formal series associated to the operad.
In this article we give a simple construction of the natural Hopf algebra from an arbitrary set operad, and a general combinatorial antipode formula based on signed enriched Schr\"oder trees \cite{Schroder}. Although not stated precisely in these terms, our definition of the natural Hopf algebra actually rests in the cooperad structure of the dual $M^*$. This can be done since, as (vector) species with distinguished basis, $M^{*}$ and $M$ are isomorphic. For reasons of simplicity, in this article we state the definition of coproduct in the natural coalgebra without the use of the notion of cooperad. In a forthcoming paper, we shall extend the results presented here to more general cooperads.   

It is worth mentioning that the device of Schr\"oder trees in our antipode formula is the same as in the bar and cobar constructions for the definition of Koszulness for quadratic operads (see \cite{Ginzburg-Kapranov, Getzler, Bruno2}). For sure not a coincidence, it remains an intriguing fact as yet to be fully understood.

Using the technique of coloring the internal nodes of the Shr\"oder trees with its depth and other bijections, we obtain for each of the examples studied here, particular and more readable forms of our general antipode formula.  All these  examples of operads and Hopf algebras were already introduced in \cite{PhdMendez} and the present article closes questions unanswered for more than twenty years.

\section{Set operads and set monoids}
Recall that a (combinatorial) species is a functor from the category of finite sets and
bijections $\BB$, to the category $\FF$ of finite sets and arbitrary functions \cite{Joyal1, B-L-L}. The
combinatorial species with the natural transformation as morphisms, form a category. We
will say that $M=N$ if $M$ and $N$ are isomorphic species.

An element $m_U\in M[U]$ is called an $M$-structure over the set of labels $U$. Two
$M$-structures $m_U$ and $m'_V$ are said to be isomorphic if there exists a bijection
$f:U\rightarrow V$ such that $M[f]m_U=m'_V$. The bijection $f$ is called an isomorphism
$m_U\stackrel{f}{\rightarrow}m'_V$. The isomorphism types of $M$-structures are usually
also called {\em unlabelled} $M$-structures. We denote by $\Tt(M)$ the set of isomorphism
types of $M$-structures. For an $M$-structure $m$, $\tau_M(m)=\tau(m)\in \Tt(M)$ denotes the isomorphism type of $m$.

A species sending the empty set to the empty set is called {\em positive}. A positive
species sending singleton sets to singleton sets is called a {\em delta} species. For a species $M$, $M_+$ will denote the positive species from $M$
$$M_+[U]=\begin{cases}M[U]&\mbox{ if $U\neq\emptyset$}\\\emptyset &\mbox{otherwise}.\end{cases}$$
More generally, for a positive integer $k$, $M_{k^+}$ denotes the species 
$$M_{k^+}[U]=\begin{cases}M[U]&\mbox{ if $|U|\geq k$}\\\emptyset &\mbox{otherwise}.\end{cases}$$
 Given
two species $M$ and $N$, the operations of product and substitution are
defined respectively by 
\be M.N[U]=\coprod_{U_1+U_2=U}M[U_1]\times N[U_2]\eeq
\noindent and
\be M(N)[U]=\coprod_{\pi\in \Pi[U]} \prod_{B\in\pi}N[B]\times M[\pi],\eeq
\noindent where $\Pi[U]$ is the set of set partitions of the finite set $U$, and $N$ is assumed to be positive in the definition of substitution. The elements
of $M(N)[U]$ are pairs of the form $(a,m'_{\pi})$, where $a=\{n_B|B\in \pi\}$, an assembly of
$N$-structures, is an element of $\prod_{B\in\pi}N[B]$, and $m'_{\pi}$ is an element of
$M[\pi]$. The set partition $\pi$ is called the partition subjacent to the assembly $a$.
The species\be 1[U]= \begin{cases}\{U\}&\text{if $U=\emptyset$}\\\emptyset &
\text{otherwise},\end{cases} \eeq is the identity with respect to the product $1.M=M.1=M$.
The singleton species: \be X[U]=\begin{cases}U&\text{if $|U|=1$}\\\emptyset &
\text{otherwise},\end{cases} \eeq

\noindent is the identity for the operation of substitution, $M(X)=X(M)=M$, $M$ being a
positive species. The class of species with the product is a monoidal category. A monoid in this category will be called a {\em set monoid}, or shorter: a monoid. A set monoid is then a species $N$ together with two morphisms: the identity $\iota:1\rightarrow N$ and an associative product $\nu:N.N\rightarrow N$. We assume here that the $M[\emptyset]$ is a unitary set, and hence the morphism $\iota$ is trivially defined. The species $E$ of sets has a natural monoidal structure $\eta:E.E\rightarrow E$ sending a pair $(U_1,U_2)$ to the union $U_1+U_2$. The species $L$ of linear orders has a natural monoidal structure $\eta: L.L\rightarrow L$ being the concatenation of linear orders. See \cite{LibroMarcelo} for theoretical developments and interesting examples of the more general structure of Hopf monoids in  species.    

The class of positive species  with the operation of substitution and
$X$ as identity is a monoidal category. A monoid in this category will be called a {\em
set operad}.

A set operad is then a positive species plus two morphisms $\eta:M(M)\rightarrow M$,
$e:X\rightarrow M$, $\eta$ being and associative `product' and $e$ `choosing' the
identity in $M[U]$ for each unitary set $U$. In this article we only consider {\em
augmented} operads, i.e., those whose subjacent species is delta. For a connected operad
the identity $e$ is trivially defined and we will only specify the product $\eta$.

Recall that the derivative $M'$ and the pointing $M^{\bullet}$ of a species  $M$ are defined respectively by 
$M'[U]=M[U\uplus\{\ast\}]$ and $M^{\bullet}[U]=U\times M[U]$. The relation between both operations is the following: $M^{\bullet}=X.M'.$

There is a very interesting connection between operads and monoids, the derivative is a functorial operation that sends operads into monoids. Let $(M,\eta)$ be an operad. By the chain rule we have
$$\eta':M(M)'=M'(M).M'\rightarrow M'.$$ 
Let $i:M'=M'(X)\hookrightarrow M'(M)$ be the inmersion $i=I_{M'}(e)$. The reader can check that the natural transformation $\nu=\eta'\circ i$ is an associative product $\nu:M'.M'\rightarrow M'$, and hence $(M',\nu)$ is a monoid. For example, the derivative of the operad $E_+$ (Example \ref{Faa}) is the monoid $E$ of above.

\section{The Natural Hopf algebra}
Let $\KK$ be a field of characteristic zero and $\KK[t_{\alpha}|\alpha\in \Tt(M)]$ the
free commutative $\KK$-algebra generated by indeterminates  that are in correspondence
with the isomophism types of the structures of an operad $M$.

The purpose of this section is to construct a coalgebra structure compatible with the
algebra structure of $\KK[t_{\alpha}|\alpha\in \Tt(M)]$, $M$ being a set operad. For that
end, for an assembly of $M$-structures $a=\{m_B\}_{B\in\pi}$, let us denote by
$t_{\tau(a)}$ the monomial $\prod_{B\in\pi}t_{\tau(m_B)}$. For an indeterminate
$t_{\alpha}$, $\alpha \in \Tt(M)$, we define \be\label{Deltanat}
\Delta(t_{\alpha})=\sum_{\eta(a,m'_{\pi})=m}t_{\tau(a)}\otimes t_{\tau(m'_{\pi})},\eeq \noindent
where $m$ is some $M$-structure of type $\alpha$. By equivariance, the definition in
equation (\ref{Deltanat}) is independent of the selection of the representative $m$.

Extending $\Delta$ multiplicatively to $\mathbb{K}[t_{\alpha}|\alpha\in\Tt(M)]$, we
obtain a coproduct \be \Delta:\mathbb{K}[t_{\alpha}|\alpha\in\Tt(M)]\rightarrow
\mathbb{K}[t_{\alpha}|\alpha\in\Tt(M)]\otimes\KK[t_{\alpha}|\alpha\in\Tt(M)],\eeq
\noindent the coassociativity of $\Delta$ follows from the associativity of the operad
product $\eta$.
We denote the type of the singleton element of $M[1]$ by a singleton unlabelled node $\bullet$.
The multiplicative functional \be \epsilon(t_{\alpha})=\begin{cases} 1&\mbox{if $\alpha=\bullet$
}\\0&\mbox{otherwise}\end{cases}\eeq \noindent is a counity because
of the properties of $e$ as unity of the operad. They form a bialgebra that we call the
{\em natural bialgebra}, and denote by $\CN_{M}.$ Identifying $t_{\bullet}$ with $1$,
the unity of the algebra, $\CN_M$ becomes a Hopf algebra. As an algebra $\CN_M$ is equal
to $\NA$. $\CN_M$ is obviously commutative but in general not co-commutative.
\subsection{Examples}
\begin{ex}\label{Faa}
	Let $E$ be the uniform species defined as $E[U]=\{U\}$ for every finite set $U$. $E_+$ is an operad, $\eta_U:E_+(E_+)[U]\rightarrow E_+[U]$ being the morphism sending $(\pi,\{\pi\})$ to $\{U\}$. the linearization of $E_+$ is the $\mathrm{Comm}$ operad.
	The isomorphism types can be identified with the positive integers, and the natural Hopf algebra $\mathcal{N}_{E_+}=\mathbb{K}[t_1,t_2,t_3,\dots]$, $t_1\cong 1$,  is the Fa\'a di Bruno Hopf algebra,
	$$\Delta t_n=\sum_{\pi\in\Pi[n]} \prod_{B\in\pi}t_{|B|}\otimes t_{|\pi|}=\sum_k\left(\sum_{\substack{j_1+2j_2+3j_3+\dots=n\\
		j_1+j_2+\dots=k}}\frac{n!}{1!^{j_1}j_1!2!^{j_2}j_2!3!^{j_3}j_3!\dots}t_2^{j_2}t_3^{j_3}\dots\right
		)\otimes
		t_k.$$
\end{ex}
\begin{figure}
\scalebox{1}
{
\begin{pspicture}(0,-2.36)(15.393,2.373)
\usefont{T1}{ptm}{m}{n}
\rput(1.2528125,1.63){$b$}
\usefont{T1}{ptm}{m}{n}
\rput(0.64281255,1.63){$a$}
\usefont{T1}{ptm}{m}{n}
\rput(0.8028125,0.97){$c$}
\usefont{T1}{ptm}{m}{n}
\rput(2.2728124,0.23){$d$}
\psline[linewidth=0.03cm](5.87,1.5)(6.35,1.5)
\psline[linewidth=0.03cm](5.81,1.0)(5.79,1.46)
\pscircle[linewidth=0.03,dimen=outer](5.81,0.92){0.1}
\psline[linewidth=0.03cm](5.89,0.94)(6.35,0.94)
\psline[linewidth=0.03cm](6.41,1.02)(6.41,1.44)
\psline[linewidth=0.03cm](5.85,1.46)(6.35,0.98)
\usefont{T1}{ptm}{m}{n}
\rput(5.7528124,1.77){$a$}
\usefont{T1}{ptm}{m}{n}
\rput(6.4028125,1.77){$b$}
\usefont{T1}{ptm}{m}{n}
\rput(5.7328124,0.71){$c$}
\usefont{T1}{ptm}{m}{n}
\rput(6.4228125,0.71){$d$}
\pscircle[linewidth=0.03,dimen=outer,fillstyle=solid](0.66,0.78){0.1}
\psline[linewidth=0.03cm](0.72,1.36)(1.2,1.36)
\psline[linewidth=0.03cm](0.66,0.86)(0.64,1.32)
\psellipse[linewidth=0.03,linestyle=dashed,dash=0.16cm 0.16cm,dimen=outer](0.95,1.2)(0.69,0.7)
\psline[linewidth=0.03cm](1.5,0.8)(2.0,0.46)
\usefont{T1}{ptm}{m}{n}
\rput(3.926719,1.125){\Large ${\stackrel{\eta}{\mapsto}}$}
\pscircle[linewidth=0.03,dimen=outer](2.18,0.4){0.1}
\pscircle[linewidth=0.03,linestyle=dashed,dash=0.16cm 0.16cm,dimen=outer](2.27,0.31){0.33}
\usefont{T1}{ptm}{m}{n}
\rput(9.772813,1.04){$b$}
\usefont{T1}{ptm}{m}{n}
\rput(8.202812,1.94){$a$}
\usefont{T1}{ptm}{m}{n}
\rput(9.322812,0.38){$c$}
\usefont{T1}{ptm}{m}{n}
\rput(9.892813,0.4){$d$}
\pscircle[linewidth=0.03,dimen=outer](8.36,1.71){0.1}
\psline[linewidth=0.03cm](14.26,1.71)(14.74,1.71)
\psline[linewidth=0.03cm](14.2,1.21)(14.18,1.67)
\pscircle[linewidth=0.03,dimen=outer](14.2,1.13){0.1}
\psline[linewidth=0.03cm](14.28,1.15)(14.74,1.15)
\psline[linewidth=0.03cm](14.8,1.23)(14.8,1.65)
\psline[linewidth=0.03cm](14.24,1.67)(14.74,1.19)
\usefont{T1}{ptm}{m}{n}
\rput(14.142813,1.98){$a$}
\usefont{T1}{ptm}{m}{n}
\rput(14.792812,1.98){$b$}
\usefont{T1}{ptm}{m}{n}
\rput(14.142813,0.92){$c$}
\usefont{T1}{ptm}{m}{n}
\rput(14.812813,0.92){$d$}
\pscircle[linewidth=0.03,dimen=outer](9.78,0.77){0.1}
\psline[linewidth=0.03cm](9.78,0.29)(9.78,0.67)
\psellipse[linewidth=0.03,linestyle=dashed,dash=0.16cm 0.16cm,dimen=outer](9.47,0.61)(0.69,0.7)
\psline[linewidth=0.03cm](8.44,1.53)(8.98,1.07)
\usefont{T1}{ptm}{m}{n}
\rput(12.556719,1.43){\Large $\stackrel{\eta}{\mapsto}$}
\pscircle[linewidth=0.03,linestyle=dashed,dash=0.16cm 0.16cm,dimen=outer](8.31,1.83){0.33}
\psline[linewidth=0.03cm](9.24,0.22)(9.72,0.22)
\psline[linewidth=0.03cm](5.86,-1.08375)(6.34,-1.08375)
\psline[linewidth=0.03cm](5.88,-1.64375)(6.34,-1.64375)
\psline[linewidth=0.03cm](5.84,-1.12375)(6.34,-1.60375)
\usefont{T1}{ptm}{m}{n}
\rput(5.7428126,-0.81375){$a$}
\usefont{T1}{ptm}{m}{n}
\rput(6.3928123,-0.81375){$b$}
\usefont{T1}{ptm}{m}{n}
\rput(5.7628126,-1.87375){$c$}
\usefont{T1}{ptm}{m}{n}
\rput(6.412812,-1.87375){$d$}
\psline[linewidth=0.03cm](5.8,-1.58375)(5.78,-1.12375)
\pscircle[linewidth=0.03,dimen=outer](5.8,-1.66375){0.1}
\psline[linewidth=0.03cm](6.4,-1.56375)(6.4,-1.14375)
\usefont{T1}{ptm}{m}{n}
\rput(4.096719,-1.34375){\Large $\stackrel{\eta}{\mapsto}$}
\psellipse[linewidth=0.03,linestyle=dashed,dash=0.16cm 0.16cm,dimen=outer](1.07,-1.50375)(0.81,0.82)
\psline[linewidth=0.03cm](0.8,-1.18375)(1.28,-1.18375)
\psline[linewidth=0.03cm](0.74,-1.68375)(0.72,-1.22375)
\pscircle[linewidth=0.03,dimen=outer](0.74,-1.76375){0.1}
\psline[linewidth=0.03cm](0.82,-1.74375)(1.28,-1.74375)
\psline[linewidth=0.03cm](1.34,-1.66375)(1.34,-1.24375)
\psline[linewidth=0.03cm](0.78,-1.22375)(1.28,-1.70375)
\usefont{T1}{ptm}{m}{n}
\rput(0.6828125,-0.91375){$a$}
\usefont{T1}{ptm}{m}{n}
\rput(1.3328124,-0.87375){$b$}
\usefont{T1}{ptm}{m}{n}
\rput(0.70281243,-1.97375){$c$}
\usefont{T1}{ptm}{m}{n}
\rput(1.3328124,-1.99375){$d$}
\pscircle[linewidth=0.03,dimen=outer](9.42,-1.5607812){0.1}
\pscircle[linewidth=0.03,dimen=outer](8.62,-1.0007813){0.1}
\psline[linewidth=0.03cm](8.9,-0.9007813)(9.2,-0.9007813)
\pscircle[linewidth=0.03,dimen=outer](9.48,-0.98078126){0.1}
\psline[linewidth=0.03cm](9.48,-1.3807813)(9.48,-1.1407813)
\pscircle[linewidth=0.03,dimen=outer](8.64,-1.5607812){0.1}
\psline[linewidth=0.03cm](8.96,-1.62)(9.2,-1.62)
\psline[linewidth=0.03cm](8.86,-1.0807812)(9.3,-1.4407812)
\usefont{T1}{ptm}{m}{n}
\rput(8.522813,-0.7107813){$a$}
\usefont{T1}{ptm}{m}{n}
\rput(9.472813,-0.7107813){$b$}
\usefont{T1}{ptm}{m}{n}
\rput(8.602814,-1.7707813){$c$}
\usefont{T1}{ptm}{m}{n}
\rput(9.452812,-1.7707813){$d$}
\usefont{T1}{ptm}{m}{n}
\rput(12.296719,-1.2807811){\Large $\stackrel{\eta}{\mapsto}$}
\psline[linewidth=0.03cm](14.28,-1.0607812)(14.76,-1.0607812)
\pscircle[linewidth=0.03,dimen=outer](14.22,-1.6407813){0.1}
\psline[linewidth=0.03cm](14.3,-1.6207813)(14.76,-1.6207813)
\psline[linewidth=0.03cm](14.82,-1.5407813)(14.82,-1.1207813)
\psline[linewidth=0.03cm](14.26,-1.1007812)(14.76,-1.5807812)
\usefont{T1}{ptm}{m}{n}
\rput(14.162812,-0.79078126){$a$}
\usefont{T1}{ptm}{m}{n}
\rput(14.812813,-0.79078126){$b$}
\usefont{T1}{ptm}{m}{n}
\rput(14.182813,-1.8707813){$c$}
\usefont{T1}{ptm}{m}{n}
\rput(14.812813,-1.8707813){$d$}
\psline[linewidth=0.03cm](14.22,-1.5607812)(14.2,-1.1007812)
\psline[linewidth=0.03cm](8.62,-1.3407812)(8.62,-1.2007812)
\pscircle[linewidth=0.03,linestyle=dashed,dash=0.16cm 0.16cm,dimen=outer](9.49,-0.81){0.33}
\pscircle[linewidth=0.03,linestyle=dashed,dash=0.16cm 0.16cm,dimen=outer](9.51,-1.69){0.33}
\pscircle[linewidth=0.03,linestyle=dashed,dash=0.16cm 0.16cm,dimen=outer](8.67,-1.65){0.33}
\pscircle[linewidth=0.03,linestyle=dashed,dash=0.16cm 0.16cm,dimen=outer](8.59,-0.87){0.33}
\psframe[linewidth=0.025999999,dimen=outer](15.36,2.36)(0.0,-2.36)
\psline[linewidth=0.025999999cm](0.02,-0.16)(15.38,-0.16)
\psline[linewidth=0.025999999cm](7.26,2.36)(7.26,-2.34)
\pscircle[linewidth=0.03,dimen=outer,fillstyle=solid](1.26,1.36){0.1}
\pscircle[linewidth=0.03,dimen=outer,fillstyle=solid](0.66,1.38){0.1}
\pscircle[linewidth=0.03,dimen=outer,fillstyle=solid](0.74,-1.16375){0.1}
\pscircle[linewidth=0.03,dimen=outer,fillstyle=solid](1.34,-1.18375){0.1}
\pscircle[linewidth=0.03,dimen=outer,fillstyle=solid](1.34,-1.76375){0.1}
\pscircle[linewidth=0.03,dimen=outer,fillstyle=solid](5.8,-1.06375){0.1}
\pscircle[linewidth=0.03,dimen=outer,fillstyle=solid](6.4,-1.08375){0.1}
\pscircle[linewidth=0.03,dimen=outer,fillstyle=solid](6.4,-1.66375){0.1}
\pscircle[linewidth=0.03,dimen=outer,fillstyle=solid](6.41,1.5){0.1}
\pscircle[linewidth=0.03,dimen=outer,fillstyle=solid](6.41,0.92){0.1}
\pscircle[linewidth=0.03,dimen=outer,fillstyle=solid](5.81,1.52){0.1}
\pscircle[linewidth=0.03,dimen=outer,fillstyle=solid](9.78,0.22){0.1}
\pscircle[linewidth=0.03,dimen=outer,fillstyle=solid](9.18,0.22){0.1}
\pscircle[linewidth=0.03,dimen=outer,fillstyle=solid](14.2,1.73){0.1}
\pscircle[linewidth=0.03,dimen=outer,fillstyle=solid](14.8,1.71){0.1}
\pscircle[linewidth=0.03,dimen=outer,fillstyle=solid](14.8,1.13){0.1}
\pscircle[linewidth=0.03,dimen=outer,fillstyle=solid](14.22,-1.0407813){0.1}
\pscircle[linewidth=0.03,dimen=outer,fillstyle=solid](14.82,-1.0607812){0.1}
\pscircle[linewidth=0.03,dimen=outer,fillstyle=solid](14.82,-1.6407813){0.1}
\end{pspicture}}
\caption{Products of $\Gr.$}\label{graphproduct}
\end{figure}
\begin{ex}\label{pointed}
Let $\Ep$ be the species of pointed sets. The elements of $\Ep[U]$ are the pairs $(U,v)$, where $v$ is the distinguished element of $U$. The pointed set $(U,v)$ can be though of as a small tree (or corolla). We describe the operad structure of $\Ep$. An element of $\Ep(\Ep)[U]$ is a pair $(a,(\pi, B_0))$ where the assemmbly $a=\{(B,v_B)\mid B\in \pi\}$ is a pointed partition (a partition where each block is a pointed set) and $B_0$ is the distinguished block of the partition $\pi$
We define the product 
$$\eta(a,(\pi,B_0))=(U,v_0),$$
\noindent where $v_0$ is defined to be $v_{B_0}$, the distinguished element of the distinguished block $B_0$. The isomorphism types can be identified also with the positive integers, the subjacent algebra is also
$\mathbb{K}[t_2,t_3,\dots]$. 
Its coproduct $\Delta_{\Eb}$ is obtained as follows
\begin{eqnarray}\label{copoint}
n\Delta_{\Eb}(t_n)&=&\sum_{m\in\Eb[n]}\sum_{\eta(a,m')=m}t_{\tau(a)}\otimes
t_{\tau(m')}\\&=&\sum_{k=1}^n
\sum_{\pi\in\gamma_k(\Eb)[n]}\prod_{(B,b)\in\pi}t_{|B|}\otimes
kt_k\\&=&\sum_{k=1}^{n}B_{n,k}(0,2t_2,3t_3,\dots)\otimes k t_k.
\end{eqnarray}
$\CN_{\Eb}$ is isomorphic to the Fa\'a di Bruno Hopf algebra, making the change $t_n\leftarrow nt_n$.
\end{ex}

\begin{ex}Let $\Gr$ be the species of simple, connected graphs.
$\Gr$ is a set operad.
 For $(\{g_B\}_{B\in\pi},g'_{\pi})\in\mathscr{G}_c(\Gr)[U]$,
$\eta(\{g_B\}_{B\in\pi},g'_{\pi})$ is the graph $g$ constructed with vertices in $U$ as
follows. Keep all the edges of the graphs in the assembly, and for each edge $\{B,B'\}$
of the external graph $g'$, connect all the vertices in $B$ with all the vertices in
$B'$. In other words, $\{x,y\}$ is an edge in $g$ if one of the following two conditions
is satisfied:
\begin{enumerate} \item \{x,y\} is an edge in $g_B$, for some block $B$ of $\pi$, \item
There exist an edge $\{B,B'\}$ of $g'$, $B, B'\in \pi$, such that $x\in B$ and $y\in
B'.$ \end{enumerate} The Hopf algebra $\CN_{\Gr}$ is freely generated by the
unlabelled simple and connected graphs. As an example of the coproduct we have

\begin{equation}\label{cograph}
\Delta(t_{\casikcuatro})=t_{\casikcuatro}\otimes 1
+2t_{\ltres}\otimes t_{\ldos}+
t_{\ldos}\otimes t_{\ltres}+
1\otimes t_{\casikcuatro}.
\end{equation}

See Fig. \ref{graphproduct} for the products used to compute the coproduct in Eq. (\ref{cograph}). By considering the complete graphs, the Fa\'a Di Bruno Hopf algebra is contained in $\CN_{\Gr}$.
\end{ex}

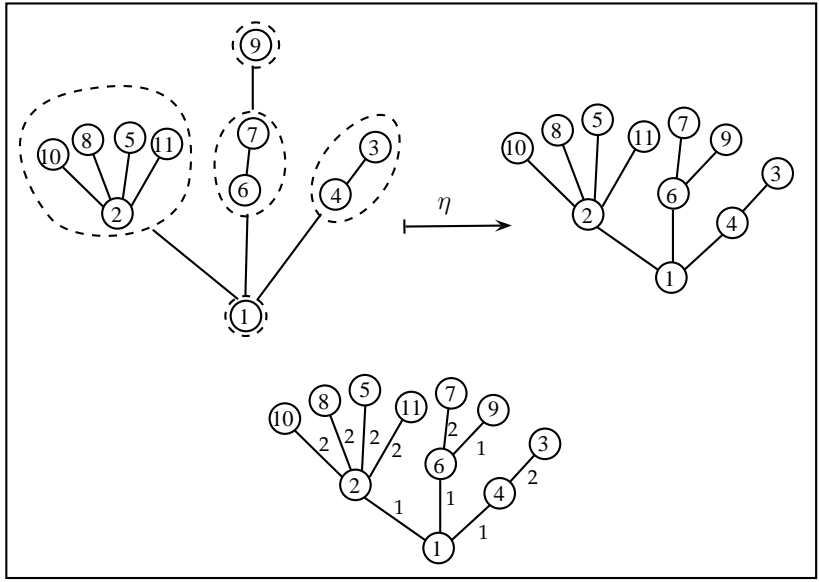
\begin{figure}\begin{center}
\scalebox{0.7} 
{
\begin{pspicture}(0,-5.48)(15.86,5.48)
\psline[linewidth=0.04cm](2.34,1.56)(1.4,2.48)
\psline[linewidth=0.04cm](2.48,1.62)(2.04,2.78)
\psline[linewidth=0.04cm](2.66,1.66)(2.84,2.94)
\psline[linewidth=0.04cm](2.76,1.48)(3.46,2.7)
\psline[linewidth=0.04cm](5.0,2.0)(5.1,2.9)
\psline[linewidth=0.04cm](6.74,1.78)(7.34,2.58)
\psbezier[linewidth=0.04,linestyle=dashed,dash=0.16cm 0.16cm](1.82,3.8)(3.62,4.16)(4.099997,3.4301543)(3.94,2.14)(3.780003,0.8498457)(1.7780365,0.8502638)(1.319457,1.36)(0.8608775,1.8697362)(0.02,3.44)(1.82,3.8)
\psbezier[linewidth=0.04,linestyle=dashed,dash=0.16cm 0.16cm](4.44,2.083335)(4.66,0.8000002)(6.0,1.552058)(5.72,2.746029)(5.44,3.94)(4.22,3.36667)(4.44,2.083335)
\psbezier[linewidth=0.04,linestyle=dashed,dash=0.16cm 0.16cm](6.2662134,1.7661103)(6.4324274,0.7200002)(7.916699,1.6139488)(7.9283495,2.8369744)(7.94,4.06)(6.1,2.8122203)(6.2662134,1.7661103)
\psellipse[linewidth=0.04,linestyle=dashed,dash=0.16cm 0.16cm,dimen=outer](5.2,4.67)(0.46,0.45)
\psellipse[linewidth=0.04,linestyle=dashed,dash=0.16cm 0.16cm,dimen=outer](5.01,-0.48)(0.41,0.4)
\psline[linewidth=0.04cm](3.24,1.16)(4.86,-0.16)
\psline[linewidth=0.04cm](5.0,-0.1)(5.04,1.46)
\psline[linewidth=0.04cm](5.14,3.44)(5.14,4.28)
\psline[linewidth=0.04cm](5.22,-0.18)(6.44,1.46)
\pscircle[linewidth=0.04,dimen=outer,fillstyle=solid](2.57,1.47){0.31}
\pscircle[linewidth=0.04,dimen=outer,fillstyle=solid](1.35,2.59){0.31}
\pscircle[linewidth=0.04,dimen=outer,fillstyle=solid](2.01,2.87){0.31}
\pscircle[linewidth=0.04,dimen=outer,fillstyle=solid](2.81,2.91){0.31}
\pscircle[linewidth=0.04,dimen=outer,fillstyle=solid](3.51,2.79){0.31}
\pscircle[linewidth=0.04,dimen=outer,fillstyle=solid](5.15,2.99){0.31}
\pscircle[linewidth=0.04,dimen=outer,fillstyle=solid](4.99,1.91){0.31}
\pscircle[linewidth=0.04,dimen=outer,fillstyle=solid](5.01,-0.48){0.31}
\pscircle[linewidth=0.04,dimen=outer,fillstyle=solid](5.2,4.67){0.31}
\usefont{T1}{ptm}{m}{n}
\rput(4.9793754,-0.485){\large 1}
\usefont{T1}{ptm}{m}{n}
\rput(2.555,1.455){\large 2}
\usefont{T1}{ptm}{m}{n}
\rput(5.1975,4.655){\large 9}
\usefont{T1}{ptm}{m}{n}
\rput(1.285625,2.555){\large 10}
\usefont{T1}{ptm}{m}{n}
\rput(3.4493752,2.775){\large 11}
\usefont{T1}{ptm}{m}{n}
\rput(1.9862515,2.855){\large 8}
\pscircle[linewidth=0.04,dimen=outer,fillstyle=solid](6.71,1.83){0.31}
\pscircle[linewidth=0.04,dimen=outer,fillstyle=solid](7.47,2.73){0.31}
\usefont{T1}{ptm}{m}{n}
\rput(7.4412503,2.715){\large 3}
\usefont{T1}{ptm}{m}{n}
\rput(6.7256246,1.815){\large 4}
\usefont{T1}{ptm}{m}{n}
\rput(4.9581246,1.915){\large 6}
\usefont{T1}{ptm}{m}{n}
\rput(5.1381264,2.955){\large 7}
\usefont{T1}{ptm}{m}{n}
\rput(2.81,2.875){\large 5}
\psline[linewidth=0.04cm,tbarsize=0.07055555cm 5.0,arrowsize=0.093cm 3.45,arrowlength=1.4,arrowinset=0.4]{|->}(8.0,1.22)(10.06,1.24)
\usefont{T1}{ptm}{m}{n}
\rput(8.781251,1.62){\Large $\eta$}
\psframe[linewidth=0.04,dimen=outer](15.86,5.48)(0.44,-5.48)
\psline[linewidth=0.04cm](8.7,-4.94)(9.66,-4.06)
\psline[linewidth=0.04cm](8.7,-4.94)(7.16,-3.86)
\psline[linewidth=0.04cm](8.7,-4.9)(8.7,-3.6)
\psline[linewidth=0.04cm](6.94,-3.64)(5.82,-2.54)
\psline[linewidth=0.04cm](7.04,-3.5)(6.6,-2.34)
\psline[linewidth=0.04cm](7.2,-3.7)(7.28,-2.14)
\psline[linewidth=0.04cm](7.36,-3.54)(8.06,-2.32)
\psline[linewidth=0.04cm](8.92,-3.14)(9.6,-2.42)
\psline[linewidth=0.04cm](8.76,-3.1)(8.86,-2.2)
\psline[linewidth=0.04cm](10.0,-3.66)(10.66,-2.94)
\pscircle[linewidth=0.04,dimen=outer,fillstyle=solid](7.27,-1.89){0.31}
\pscircle[linewidth=0.04,dimen=outer,fillstyle=solid](5.75,-2.43){0.31}
\pscircle[linewidth=0.04,dimen=outer,fillstyle=solid](6.51,-2.09){0.31}
\pscircle[linewidth=0.04,dimen=outer,fillstyle=solid](7.09,-3.69){0.31}
\pscircle[linewidth=0.04,dimen=outer,fillstyle=solid](8.15,-2.21){0.31}
\pscircle[linewidth=0.04,dimen=outer,fillstyle=solid](8.91,-1.95){0.31}
\pscircle[linewidth=0.04,dimen=outer,fillstyle=solid](9.71,-2.27){0.31}
\pscircle[linewidth=0.04,dimen=outer,fillstyle=solid](8.71,-3.29){0.31}
\pscircle[linewidth=0.04,dimen=outer,fillstyle=solid](9.83,-3.85){0.31}
\pscircle[linewidth=0.04,dimen=outer,fillstyle=solid](10.69,-2.91){0.31}
\usefont{T1}{ptm}{m}{n}
\rput(6.48625,-2.105){\large 8}
\usefont{T1}{ptm}{m}{n}
\rput(8.678123,-3.325){\large 6}
\usefont{T1}{ptm}{m}{n}
\rput(8.898123,-1.965){\large 7}
\usefont{T1}{ptm}{m}{n}
\rput(9.7175,-2.245){\large 9}
\usefont{T1}{ptm}{m}{n}
\rput(8.149376,-2.225){\large 11}
\usefont{T1}{ptm}{m}{n}
\rput(5.705624,-2.425){\large 10}
\usefont{T1}{ptm}{m}{n}
\rput(9.825624,-3.865){\large 4}
\pscircle[linewidth=0.04,dimen=outer,fillstyle=solid](8.67,-4.89){0.31}
\usefont{T1}{ptm}{m}{n}
\rput(8.639375,-4.905){\large 1}
\usefont{T1}{ptm}{m}{n}
\rput(7.075,-3.705){\large 2}
\usefont{T1}{ptm}{m}{n}
\rput(10.661249,-2.905){\large 3}
\usefont{T1}{ptm}{m}{n}
\rput(7.27,-1.885){\large 5}
\usefont{T1}{ppl}{m}{n}
\rput(7.4609375,-2.79){2}
\usefont{T1}{ppl}{m}{n}
\rput(6.9809375,-2.77){2}
\usefont{T1}{ppl}{m}{n}
\rput(6.5009375,-2.91){2}
\usefont{T1}{ppl}{m}{n}
\rput(8.940937,-2.63){2}
\usefont{T1}{ppl}{m}{n}
\rput(7.9239063,-4.11){1}
\usefont{T1}{ppl}{m}{n}
\rput(8.903906,-3.95){1}
\usefont{T1}{ppl}{m}{n}
\rput(9.483906,-2.99){1}
\usefont{T1}{ppl}{m}{n}
\rput(9.523907,-4.59){1}
\usefont{T1}{ppl}{m}{n}
\rput(10.4609375,-3.55){2}
\usefont{T1}{ppl}{m}{n}
\rput(7.8809376,-3.03){2}
\psline[linewidth=0.04cm](13.12,0.2)(14.08,1.08)
\psline[linewidth=0.04cm](13.12,0.2)(11.58,1.28)
\psline[linewidth=0.04cm](13.12,0.24)(13.12,1.54)
\psline[linewidth=0.04cm](11.36,1.5)(10.24,2.6)
\psline[linewidth=0.04cm](11.46,1.64)(11.02,2.8)
\psline[linewidth=0.04cm](11.62,1.44)(11.7,3.0)
\psline[linewidth=0.04cm](11.78,1.6)(12.48,2.82)
\psline[linewidth=0.04cm](13.34,2.0)(14.02,2.72)
\psline[linewidth=0.04cm](13.18,2.04)(13.28,2.94)
\psline[linewidth=0.04cm](14.42,1.48)(15.08,2.2)
\pscircle[linewidth=0.04,dimen=outer,fillstyle=solid](11.69,3.25){0.31}
\pscircle[linewidth=0.04,dimen=outer,fillstyle=solid](10.17,2.71){0.31}
\pscircle[linewidth=0.04,dimen=outer,fillstyle=solid](10.93,3.05){0.31}
\pscircle[linewidth=0.04,dimen=outer,fillstyle=solid](11.51,1.45){0.31}
\pscircle[linewidth=0.04,dimen=outer,fillstyle=solid](12.57,2.93){0.31}
\pscircle[linewidth=0.04,dimen=outer,fillstyle=solid](13.33,3.19){0.31}
\pscircle[linewidth=0.04,dimen=outer,fillstyle=solid](14.13,2.87){0.31}
\pscircle[linewidth=0.04,dimen=outer,fillstyle=solid](13.13,1.85){0.31}
\pscircle[linewidth=0.04,dimen=outer,fillstyle=solid](14.25,1.29){0.31}
\pscircle[linewidth=0.04,dimen=outer,fillstyle=solid](15.11,2.23){0.31}
\usefont{T1}{ptm}{m}{n}
\rput(10.90625,3.035){\large 8}
\usefont{T1}{ptm}{m}{n}
\rput(13.098124,1.815){\large 6}
\usefont{T1}{ptm}{m}{n}
\rput(13.318123,3.175){\large 7}
\usefont{T1}{ptm}{m}{n}
\rput(14.1375,2.895){\large 9}
\usefont{T1}{ptm}{m}{n}
\rput(12.569376,2.915){\large 11}
\usefont{T1}{ptm}{m}{n}
\rput(10.125625,2.715){\large 10}
\usefont{T1}{ptm}{m}{n}
\rput(14.245625,1.275){\large 4}
\pscircle[linewidth=0.04,dimen=outer,fillstyle=solid](13.09,0.25){0.31}
\usefont{T1}{ptm}{m}{n}
\rput(13.059375,0.235){\large 1}
\usefont{T1}{ptm}{m}{n}
\rput(11.495,1.435){\large 2}
\usefont{T1}{ptm}{m}{n}
\rput(15.081249,2.235){\large 3}
\usefont{T1}{ptm}{m}{n}
\rput(11.69,3.255){\large 5}
\end{pspicture}
}\caption{ Product of the operad $\Arb$ and corresponding bicoloring.}\label{NAP}\end{center}
\end{figure}
\begin{ex}\label{rooted}
Let $\Arb$ be the species of rooted trees. The operad structure on $\Arb$ is defined as follows.
For a pair $(a,T'_{\pi})\in \Arb(\Arb)[U]$, $a=\{T_B\}_{B\in\pi}$ a forest, and $T'_{\pi}$ a tree with vertices in the partition $\pi$, the tree  $T=\eta(a,T'_{\pi})$ will have all the edges in $a$ plus a few more defined as follows: for every pair of trees $T_{B}$ and $T_{B'}$ such that $\{B,B'\}$ is an edge of $T'_{\pi}$, insert an edge between the roots of  $T_{B}$ and $T_{B'}$. The root of $T$ will be the root of the tree in $a$ whose associate block is the root of $T'_{\pi}$ (see fig. \ref{NAP}). The linearization of it is the NAP operad (\cite{Livernet}). The factorization of a tree $T$ can be codified as a bicoloring $b$ of the edges of $T$ in the colors $1$ and $2$ (see Fig. \ref{NAP}). The edges in $T$ coming from the external tree are colored $1$, the edges coming from the internal ones are colored $2$. Denote by $a_b(2)$ the assembly of internal trees (colored 2) and by $T_b(1)$ the sub-rooted tree of $T$ induced by the edges of color $1$. Then, the coproduct of $\CN_{\Arb}$ can be written as
\be\label{coproductnap} \Delta(t_{\tau(T)})=\sum_{b}t_{\tau(a_b(2))}\otimes t_{\tau(T_b(1))}
\eeq 
\noindent The sum of above is over all the bicolorations $b$ of edges of $T$ such that edges colored $1$ induce a subrooted tree of $T$ (that may be empty).
For example, we have the coproduct
$$\Delta( t_{\arbolitoder})=1\otimes t_{\arbolitoder}+t_{\ldos}\otimes t_{\cdos}+t_{\ltres}\otimes t_{\ldos}+t_{\ldos}\otimes t_{\ltres}+t_{\ldos}^2\otimes t_{\ldos}+t_{\arbolitoder}\otimes 1,$$
according with the bicolorings
\[
\scalebox{0.8}
{
\begin{pspicture}(0,-0.34)(5.0759373,0.34)
\psdots[dotsize=0.12](0.42,0.26)
\psdots[dotsize=0.12](0.42,0.02)
\psdots[dotsize=0.12](0.26,-0.22)
\psdots[dotsize=0.12](0.12,0.02)
\psline[linewidth=0.04cm](0.42,0.22)(0.42,0.04)
\psline[linewidth=0.04cm](0.4,0.0)(0.28,-0.22)
\psline[linewidth=0.04cm](0.26,-0.24)(0.12,0.02)
\usefont{T1}{ptm}{m}{n}
\rput(0.04421875,-0.2){\small 1}
\usefont{T1}{ptm}{m}{n}
\rput(0.40421876,-0.2){\small 1}
\usefont{T1}{ptm}{m}{n}
\rput(0.52421874,0.14){\small 1}
\psdots[dotsize=0.12](1.2857188,0.26)
\psdots[dotsize=0.12](1.2857188,0.02)
\psdots[dotsize=0.12](1.1257187,-0.22)
\psdots[dotsize=0.12](0.9857187,0.02)
\psline[linewidth=0.04cm](1.2857188,0.22)(1.2857188,0.04)
\psline[linewidth=0.04cm](1.2657187,0.0)(1.1457187,-0.22)
\psline[linewidth=0.04cm](1.1257187,-0.24)(0.9857187,0.02)
\usefont{T1}{ptm}{m}{n}
\rput(0.9299375,-0.2){\small 1}
\usefont{T1}{ptm}{m}{n}
\rput(1.2699375,-0.2){\small 1}
\usefont{T1}{ptm}{m}{n}
\rput(1.4200938,0.14){\small 2}
\psdots[dotsize=0.12](3.0748124,0.26)
\psdots[dotsize=0.12](3.0748124,0.02)
\psdots[dotsize=0.12](2.9148126,-0.22)
\psdots[dotsize=0.12](2.7748125,0.02)
\psline[linewidth=0.04cm](3.0748124,0.22)(3.0748124,0.04)
\psline[linewidth=0.04cm](3.0548124,0.0)(2.9348125,-0.22)
\psline[linewidth=0.04cm](2.9148126,-0.24)(2.7748125,0.02)
\usefont{T1}{ptm}{m}{n}
\rput(2.7291875,-0.2){\small 2}
\usefont{T1}{ptm}{m}{n}
\rput(3.0590312,-0.2){\small 1}
\usefont{T1}{ptm}{m}{n}
\rput(3.1790311,0.14){\small 1}
\psdots[dotsize=0.12](2.1651876,0.26)
\psdots[dotsize=0.12](2.1651876,0.02)
\psdots[dotsize=0.12](2.0051875,-0.22)
\psdots[dotsize=0.12](1.8651875,0.02)
\psline[linewidth=0.04cm](2.1651876,0.22)(2.1651876,0.04)
\psline[linewidth=0.04cm](2.1451876,0.0)(2.0251875,-0.22)
\psline[linewidth=0.04cm](2.0051875,-0.24)(1.8651875,0.02)
\usefont{T1}{ptm}{m}{n}
\rput(1.8094063,-0.2){\small 1}
\usefont{T1}{ptm}{m}{n}
\rput(2.1795626,-0.2){\small 2}
\usefont{T1}{ptm}{m}{n}
\rput(2.2995625,0.14){\small 2}
\psdots[dotsize=0.12](3.9405313,0.26)
\psdots[dotsize=0.12](3.9405313,0.02)
\psdots[dotsize=0.12](3.7805312,-0.22)
\psdots[dotsize=0.12](3.6405313,0.02)
\psline[linewidth=0.04cm](3.9405313,0.22)(3.9405313,0.04)
\psline[linewidth=0.04cm](3.9205313,0.0)(3.8005311,-0.22)
\psline[linewidth=0.04cm](3.7805312,-0.24)(3.6405313,0.02)
\usefont{T1}{ptm}{m}{n}
\rput(3.5949063,-0.2){\small 2}
\usefont{T1}{ptm}{m}{n}
\rput(3.92475,-0.2){\small 1}
\usefont{T1}{ptm}{m}{n}
\rput(4.0749063,0.14){\small 2}
\psdots[dotsize=0.12](4.82,0.26)
\psdots[dotsize=0.12](4.82,0.02)
\psdots[dotsize=0.12](4.66,-0.22)
\psdots[dotsize=0.12](4.52,0.02)
\psline[linewidth=0.04cm](4.82,0.22)(4.82,0.04)
\psline[linewidth=0.04cm](4.8,0.0)(4.68,-0.22)
\psline[linewidth=0.04cm](4.66,-0.24)(4.52,0.02)
\usefont{T1}{ptm}{m}{n}
\rput(4.474375,-0.2){\small 2}
\usefont{T1}{ptm}{m}{n}
\rput(4.854375,-0.2){\small 2}
\usefont{T1}{ptm}{m}{n}
\rput(4.954375,0.14){\small 2}
\end{pspicture}
}.\]
\end{ex}

\begin{ex} We generalize now the operad $\Arb$. Let $N$ be an arbitrary species.
 The species   $\Arb_N$ has as elements $N$-enriched rooted trees. An enriched rooted tree $T_N\in\Arb_N[U]$ is of the form $(T,\{n_u\}_{u\in  U})$, where $T\in\Arb[U]$ is a rooted tree and $n_u$ is an $N$-structure on the fiber (set of descendants) of the vertex $u$. As a species $\Arb_N$ is uniquely defined by the implicit equation \be \Arb_N=X.N(\Arb_N).\eeq

  An element $(a, T_N)$ in $\Arb_N(\Arb_N)[U]$ is of the form
 \be (a,T_N)=(\{(T_B,\{n_u\}_{u\in B})\mid B\in\pi\},(T_{\pi},\{n'_B\}_{B\in\pi}))
 \eeq
 If $(N,\nu)$ is a monoid, we define the product $\eta_{\nu}:\Arb_N(\Arb_N)\rightarrow\Arb_N$, $\eta_{\nu}(a,T_N)=(T_2,\{n''_u\}_{u\in U})$, where $T_2$ is the product $T_2=\eta(\{T_B\}_{B\in\pi}, T_{\pi})$, $\eta$ as defined in the Example \ref{rooted}. The fibers of $T_2$ are enriched by setting
 \begin{equation}
 n''_u=\begin{cases}\nu(n_u,n'_B)&\mbox{if $u$ is the root of some tree $T_B$}\\ n_u&\mbox{otherwise}.\end{cases}
 \end{equation}
\noindent  See Fig. \ref{NAPN}.
\begin{figure}[h]
\begin{center}
\scalebox{0.8}
{\begin{pspicture}(0,-4.91)(14.52,4.91)
\pscircle[linewidth=0.04,dimen=outer](10.76,1.16){0.26}
\pscircle[linewidth=0.04,dimen=outer](11.22,-0.24){0.26}
\rput{-1.6081004}(0.030324182,0.3708421){\pscircle[linewidth=0.04,dimen=outer](13.227206,-0.8949428){0.26}}
\rput{-1.6081004}(0.029001217,0.33943635){\pscircle[linewidth=0.04,dimen=outer](12.107647,-0.8635123){0.26}}
\rput{-1.6081004}(0.06779826,0.35353276){\pscircle[linewidth=0.04,dimen=outer](12.62926,-2.2386918){0.26}}
\psline[linewidth=0.04cm](12.10091,-1.1034178)(12.515482,-2.015411)
\psline[linewidth=0.04cm](12.755387,-2.022146)(13.12051,-1.132042)
\psline[linewidth=0.04cm](10.8,0.94)(11.16,0.0)
\pscircle[linewidth=0.04,dimen=outer](8.9,-1.42){0.26}
\pscircle[linewidth=0.04,dimen=outer](10.76,-3.1){0.26}
\pscircle[linewidth=0.04,dimen=outer](9.82,0.58){0.26}
\pscircle[linewidth=0.04,dimen=outer](9.86,-0.9){0.26}
\pscircle[linewidth=0.04,dimen=outer](11.74,1.32){0.26}
\pscircle[linewidth=0.04,dimen=outer](12.64,2.04){0.26}
\pscircle[linewidth=0.04,dimen=outer](12.66,0.84){0.26}
\psline[linewidth=0.04cm](12.64,1.79)(12.66,1.1)
\psline[linewidth=0.04cm](9.04,-1.58)(10.52,-2.98)
\psline[linewidth=0.04cm](10.7,-2.9)(9.9,-1.11)
\psline[linewidth=0.04cm](9.9,-0.66)(9.88,0.34)
\psline[linewidth=0.04cm](11.22,-0.49)(10.8,-2.84)
\psline[linewidth=0.04cm](11.44,-0.14)(12.5,0.69)
\psline[linewidth=0.04cm](11.42,-0.06)(11.74,1.06)
\psline[linewidth=0.04cm](10.96,-2.96)(12.4,-2.33)
\rput{-1.6081004}(0.057703555,0.353958){\psarc[linewidth=0.04](12.639363,-1.8788337){0.58}{0.0}{180.0}}
\psarc[linewidth=0.04](10.89,-3.15){1.39}{5.630683}{161.09543}
\psarc[linewidth=0.04](11.6,-0.06){0.9}{5.630683}{175.3141}
\usefont{T1}{ptm}{m}{n}
\rput{-1.6081004}(0.06801672,0.35303995){\rput(12.56025,-2.2604713){\large 1}}
\usefont{T1}{ptm}{m}{n}
\rput(10.72875,-3.125){\large 2}
\usefont{T1}{ptm}{m}{n}
\rput(9.788439,-0.885){\large 3}
\usefont{T1}{ptm}{m}{n}
\rput(9.796719,0.555){\large 4}
\usefont{T1}{ptm}{m}{n}
\rput{-1.6081004}(0.028245404,0.3392986){\rput(12.0833,-0.8477474){\large 6}}
\usefont{T1}{ptm}{m}{n}
\rput(8.816094,-1.445){\large 7}
\usefont{T1}{ptm}{m}{n}
\rput(10.732187,1.155){\large 8}
\usefont{T1}{ptm}{m}{n}
\rput(12.635626,0.835){\large 9}
\usefont{T1}{ptm}{m}{n}
\rput(11.208438,-0.245){\large b}
\usefont{T1}{ptm}{m}{n}
\rput(11.709375,1.275){\large d}
\usefont{T1}{ptm}{m}{n}
\rput(12.628124,1.995){\large k}
\psarc[linewidth=0.04](12.72,1.08){0.62}{37.568592}{137.2906}
\psarc[linewidth=0.04](9.92,-0.76){0.62}{37.568592}{137.2906}
\usefont{T1}{ptm}{m}{n}
\rput{-1.6081004}(0.031316563,0.36955005){\rput(13.160732,-0.9389133){\large 5}}
\pscircle[linewidth=0.04,dimen=outer](1.76,1.07){0.26}
\pscircle[linewidth=0.04,dimen=outer](1.74,-0.17){0.26}
\psline[linewidth=0.04cm](1.74,0.83)(1.76,0.05)
\pscircle[linewidth=0.04,dimen=outer](1.87,0.38){1.19}
\psarc[linewidth=0.04](1.78,-0.15){0.62}{56.309933}{123.69007}
\usefont{T1}{ptm}{m}{n}
\rput(1.70625,1.045){\large 8}
\usefont{T1}{ptm}{m}{n}
\rput(1.72125,-0.175){\large b}
\usefont{T1}{ptm}{m}{n}
\rput(2.338125,0.1956226){$n_4$}
\pscircle[linewidth=0.04,dimen=outer](0.89,2.76){0.26}
\pscircle[linewidth=0.04,dimen=outer](0.89,2.76){0.39}
\usefont{T1}{ptm}{m}{n}
\rput(0.8753125,2.7451563){\large d}
\pscircle[linewidth=0.04,dimen=outer](2.84,4.15){0.26}
\pscircle[linewidth=0.04,dimen=outer](2.84,3.01){0.26}
\psline[linewidth=0.04cm](2.84,3.93)(2.84,3.25)
\pscircle[linewidth=0.04,dimen=outer](2.9,3.49){1.06}
\psarc[linewidth=0.04](2.88,3.07){0.62}{56.309933}{118.07249}
\usefont{T1}{ptm}{m}{n}
\rput(2.7975,3.005){\large 9}
\usefont{T1}{ptm}{m}{n}
\rput(2.8275,4.105){\large k}
\usefont{T1}{ptm}{m}{n}
\rput(3.4181252,3.4156225){$n_5$}
\psline[linewidth=0.04cm](1.28,1.37)(0.96,2.39)
\psline[linewidth=0.04cm](2.24,1.47)(2.68,2.47)
\psarc[linewidth=0.04](1.8,1.53){0.9}{23.629377}{155.37643}
\usefont{T1}{ptm}{m}{n}
\rput(2.9381251,1.6956227){$n'_4$}
\pscircle[linewidth=0.04,dimen=outer](5.18,1.23){0.26}
\pscircle[linewidth=0.04,dimen=outer](4.26,1.35){0.26}
\pscircle[linewidth=0.04,dimen=outer](4.64,0.13){0.26}
\psline[linewidth=0.04cm](4.26,1.11)(4.54,0.35)
\psline[linewidth=0.04cm](4.76,0.35)(5.06,1.03)
\pscircle[linewidth=0.04,dimen=outer](2.66,-3.31){0.26}
\pscircle[linewidth=0.04,dimen=outer](3.26,-4.23){0.26}
\pscircle[linewidth=0.04,dimen=outer](3.88,-2.03){0.26}
\pscircle[linewidth=0.04,dimen=outer](3.84,-3.05){0.26}
\psline[linewidth=0.04cm](2.82,-3.45)(3.12,-4.07)
\psline[linewidth=0.04cm](3.4,-4.03)(3.74,-3.27)
\psline[linewidth=0.04cm](3.84,-2.81)(3.86,-2.27)
\pscircle[linewidth=0.04,dimen=outer](4.75,0.76){1.15}
\pscircle[linewidth=0.04,dimen=outer](3.56,-3.19){1.54}
\psline[linewidth=0.04cm](2.42,-0.67)(2.92,-1.81)
\psline[linewidth=0.04cm](4.0,-1.71)(4.48,-0.31)
\psarc[linewidth=0.04](3.28,-4.07){0.58}{26.565052}{157.38014}
\psarc[linewidth=0.04](3.85,-3.02){0.59}{36.253838}{122.90524}
\psarc[linewidth=0.04](4.6,0.23){0.58}{21.801409}{146.30994}
\psarc[linewidth=0.04](3.47,-1.98){1.39}{37.05653}{147.75754}
\usefont{T1}{ptm}{m}{n}
\rput(4.5593753,0.105){\large 1}
\usefont{T1}{ptm}{m}{n}
\rput(3.215,-4.255){\large 2}
\usefont{T1}{ptm}{m}{n}
\rput(3.8412502,-3.055){\large 3}
\usefont{T1}{ptm}{m}{n}
\rput(3.8256247,-2.055){\large 4}
\usefont{T1}{ptm}{m}{n}
\rput(5.15,1.205){\large 5}
\usefont{T1}{ptm}{m}{n}
\rput(4.1981254,1.345){\large 6}
\usefont{T1}{ptm}{m}{n}
\rput(2.638125,-3.295){\large 7}
\psline[linewidth=0.04cm,tbarsize=0.07055555cm 5.0,arrowsize=0.093cm 2.0,arrowlength=1.4,arrowinset=0.4]{|*->}(6.56,-0.51)(8.72,-0.51)
\usefont{T1}{ptm}{m}{n}
\rput(7.332031,-0.155){\large $\eta_{\nu}$}
\usefont{T1}{ptm}{m}{n}
\rput(4.018125,-4.0243773){$n_1$}
\usefont{T1}{ptm}{m}{n}
\rput(12.268126,-3.2243776){$\nu(n_1, n'_1)$}
\usefont{T1}{ptm}{m}{n}
\rput(5.278125,0.2756226){$n_3$}
\usefont{T1}{ptm}{m}{n}
\rput(10.438126,-0.6443775){$n_2$}
\usefont{T1}{ptm}{m}{n}
\rput(4.418125,-2.7843773){$n_2$}
\usefont{T1}{ptm}{m}{n}
\rput(4.8181252,-1.3843774){$n'_1$}
\usefont{T1}{ptm}{m}{n}
\rput(12.788124,-0.2443774){$\nu(n_4, n'_4)$}
\usefont{T1}{ptm}{m}{n}
\rput(13.318127,1.2356226){$n_5$}
\usefont{T1}{ptm}{m}{n}
\rput(13.358126,-2.0043776){$n_3$}
\psframe[linewidth=0.04,dimen=outer](14.52,4.91)(0.0,-4.91)
\end{pspicture}}
\end{center}
\caption{Product of the operad $\Arb_{N}$}\label{NAPN}
\end{figure}

 $\Arb_E=\Arb$, and $\Arb_L$ is the operad of plannar rooted trees. Since for any positive species $M$, $L(M)$ and $E(M)$ are respectively the free and commutatively free monoids generated by $M$, $\CN_{\Arb_{E(N)}}$ and $\CN_{\Arb_{L(N)}}$ give us a large class of Hopf algebras.
For the Hopf algebra $\CN_{\Arb_L}$ of planar trees, we have the coproducts
 \begin{eqnarray*} \Delta( t_{\arbolitoder})&=&t_{\arbolitoder}\otimes 1+t_{\ldos}\otimes t_{\cdos}+t_{\ldos}\otimes t_{\ltres}+t_{\ldos}^2\otimes t_{\ldos}+1\otimes t_{\arbolitoder}\\ 
  \Delta( t_{\arbolitoizq})&=&t_{\arbolitoizq}\otimes 1+t_{\ldos}\otimes t_{\cdos}+t_{\ltres}\otimes t_{\ldos}+1\otimes t_{\arbolitoizq}\end{eqnarray*}

The construction at the end of section 2 is also a source of Hopf algebras associated to enriched trees, since for each operad $M$ we have an operad $\Arb_{M'}$. We can also associate to a monoid $N$ an infinite sequence of Hopf algebras of this form, because we have
 
\begin{equation*}
N\mbox{ monoid}\Rightarrow \Arb_N \mbox{ operad}\Rightarrow(\Arb_M)'\mbox{ monoid}\Rightarrow \Arb_{(\Arb_N)'}\mbox{ operad}
 \Rightarrow (\Arb_{(\Arb_N)'})' \mbox{ monoid}\Rightarrow\dots.
\end{equation*}
\end{ex}
\begin{ex}Let $\Grp$ be the species of simple, connected pointed graphs, i.e., graphs with a distinguished vertex. The elements of $\Grp$ are pairs $(g,v)$, where $g$ is a simple and connected graph $g\in\Gr[B]$, and $v\in B$ is its distinguished vertex. $\Grp$ is a set operad.
For $(\{(g_B,v_B)\}_{B\in\pi},(g'_{\pi},B_0))\in\Grp(\Grp)[U]$,
$\eta(\{(g_B,v_B)\}_{B\in\pi},(g'_{\pi},B_0))$ is the pointed graph $(g,v_0)$ constructed with vertices in $U$ as
follows. Keep all the edges of the graphs in the assembly, and for each edge $\{B,B'\}$
of the external graph $g'_{\pi}$, connect the respective distinguished vertices $v_B$ in $B$ and $v_{B'}$ in 
$B'$. In other words, $\{x,y\}$ is an edge in $g$ if one of the following two conditions
is satisfied:
\begin{enumerate} \item $\{x,y\}$ is an edge in $g_B$, for some block $B$ of $\pi$, \item
There exist an edge $\{B,B'\}$ of $g_{\pi}'$, $B, B'\in \pi$, such that $x=v_B$ and $y=v_B',$ the distinguished vertices of the respective graphs $g_B$ and $g_{B'}$.
\end{enumerate}	Finally, we chose $v_{B_0}$, the distinguished vertex of $g_{B_0}$, to be the distinguished vertex $v_0$ of $g$.
The Hopf algebra $\CN_{\Arb}$ is clearly contained in $\CN_{\Grp}$
As an example we compute the coproduct of the bow tie graph (see Fig. \ref{prodGrp} for corresponding products).
\[\Delta(t_{\lacito})=t_{\lacito}\otimes 1+2t_{\triangulito}\otimes t_{\triangulito}+1\otimes t_{\lacito}.\] 
\end{ex}
 
\begin{figure}[h]
\begin{center}
\scalebox{0.5} 
{
\begin{pspicture}(0,-3.41)(10.96,3.41)
\pscircle[linewidth=0.04,dimen=outer,fillstyle=solid,fillcolor=blue](2.35,1.57){0.16}
\pscircle[linewidth=0.04,dimen=outer,fillstyle=solid](1.24,1.03){0.16}
\pscircle[linewidth=0.126,linecolor=blue,dimen=outer](1.73,1.62){1.09}
\pscircle[linewidth=0.04,dimen=outer,fillstyle=solid,fillcolor=blue](3.95,2.17){0.16}
\pscircle[linewidth=0.04,dimen=outer,fillstyle=solid,fillcolor=blue](3.93,0.95){0.16}
\pscircle[linewidth=0.048,dimen=outer](3.95,2.17){0.28}
\pscircle[linewidth=0.05,dimen=outer](3.93,0.95){0.28}
\psline[linewidth=0.04cm](2.78,1.87)(3.7,2.13)
\psline[linewidth=0.04cm](2.66,1.15)(3.7,0.95)
\psline[linewidth=0.04cm](3.96,1.93)(3.96,1.19)
\psline[linewidth=0.04cm](1.26,2.17)(1.28,1.15)
\psline[linewidth=0.04cm](1.36,1.05)(2.2,1.51)
\psline[linewidth=0.04cm](1.42,2.19)(2.24,1.61)
\pscircle[linewidth=0.04,dimen=outer,fillstyle=solid](1.3,2.27){0.16}
\pscircle[linewidth=0.04,dimen=outer,fillstyle=solid,fillcolor=blue](2.5,-1.59){0.16}
\pscircle[linewidth=0.04,dimen=outer,fillstyle=solid](3.61,-2.13){0.16}
\pscircle[linewidth=0.126,linecolor=blue,dimen=outer](3.12,-1.54){1.09}
\pscircle[linewidth=0.04,dimen=outer,fillstyle=solid,fillcolor=blue](0.9,-0.99){0.16}
\pscircle[linewidth=0.04,dimen=outer,fillstyle=solid,fillcolor=blue](0.92,-2.21){0.16}
\pscircle[linewidth=0.048,dimen=outer](0.9,-0.99){0.28}
\pscircle[linewidth=0.05,dimen=outer](0.92,-2.21){0.28}
\psline[linewidth=0.04cm](2.07,-1.29)(1.15,-1.03)
\psline[linewidth=0.04cm](2.19,-2.01)(1.15,-2.21)
\psline[linewidth=0.04cm](0.89,-1.23)(0.89,-1.97)
\psline[linewidth=0.04cm](3.59,-0.99)(3.57,-2.01)
\psline[linewidth=0.04cm](3.49,-2.11)(2.65,-1.65)
\psline[linewidth=0.04cm](3.43,-0.97)(2.61,-1.55)
\pscircle[linewidth=0.04,dimen=outer,fillstyle=solid](3.55,-0.89){0.16}
\psline[linewidth=0.04cm,tbarsize=0.07055555cm 5.0,arrowsize=0.05291667cm 2.0,arrowlength=1.4,arrowinset=0.4]{|->}(4.66,1.65)(6.56,1.65)
\psline[linewidth=0.04cm,tbarsize=0.07055555cm 5.0,arrowsize=0.05291667cm 2.0,arrowlength=1.4,arrowinset=0.4]{|->}(4.68,-1.45)(6.58,-1.45)
\pscircle[linewidth=0.04,dimen=outer,fillstyle=solid](7.34,0.99){0.16}
\psline[linewidth=0.04cm](7.36,2.17)(7.38,1.15)
\psline[linewidth=0.04cm](7.52,2.19)(8.34,1.61)
\pscircle[linewidth=0.04,dimen=outer,fillstyle=solid](7.4,2.24){0.16}
\pscircle[linewidth=0.04,dimen=outer,fillstyle=solid,fillcolor=blue](8.42,1.57){0.16}
\pscircle[linewidth=0.04,dimen=outer,fillstyle=solid](9.53,0.99){0.16}
\psline[linewidth=0.04cm](9.51,2.17)(9.49,1.15)
\psline[linewidth=0.04cm](9.41,1.05)(8.57,1.51)
\psline[linewidth=0.04cm](9.35,2.19)(8.53,1.61)
\pscircle[linewidth=0.04,dimen=outer,fillstyle=solid](9.47,2.24){0.16}
\psline[linewidth=0.04cm](7.46,1.05)(8.3,1.51)
\pscircle[linewidth=0.04,dimen=outer,fillstyle=solid](7.32,-1.97){0.16}
\psline[linewidth=0.04cm](7.34,-0.79)(7.36,-1.81)
\psline[linewidth=0.04cm](7.5,-0.77)(8.32,-1.35)
\pscircle[linewidth=0.04,dimen=outer,fillstyle=solid](7.38,-0.72){0.16}
\pscircle[linewidth=0.04,dimen=outer,fillstyle=solid,fillcolor=blue](8.4,-1.39){0.16}
\pscircle[linewidth=0.04,dimen=outer,fillstyle=solid](9.51,-1.97){0.16}
\psline[linewidth=0.04cm](9.49,-0.79)(9.47,-1.81)
\psline[linewidth=0.04cm](9.39,-1.91)(8.55,-1.45)
\psline[linewidth=0.04cm](9.33,-0.77)(8.51,-1.35)
\pscircle[linewidth=0.04,dimen=outer,fillstyle=solid](9.45,-0.72){0.16}
\psline[linewidth=0.04cm](7.44,-1.91)(8.28,-1.45)
\usefont{T1}{ptm}{m}{n}
\rput(5.4090624,2.05){\LARGE $\eta$}
\usefont{T1}{ptm}{m}{n}
\rput(5.5290623,-1.03){\LARGE $\eta$}
\psframe[linewidth=0.04,dimen=outer](10.92,3.41)(0.02,-3.41)
\psline[linewidth=0.04cm](0.02,0.19)(10.94,0.21)
\end{pspicture}
}
\caption{Example of products in $\Grp$}\label{prodGrp}
\end{center}
\end{figure}

\section{The general  antipode formula}
In this subsection we present an antipode formula for $\CN_M$, which is a natural
generalization of the antipode formula of Haiman and Schmitt \cite{Haimann-Schmitt}.
We begin by introducing some terminology about enriched Schr\"oder trees as presented in
 \cite{MendezKoszulduality}.  We denote by
 $\mathcal{F}$ the species of Schr\"oder trees, or generalized commutative
parenthesizations. It satisfies the implicit equation.
\begin{eqnarray}\label{ec16}
\mathcal{F}&=&X+E_{2^+}(\mathcal{F})
\end{eqnarray}

The structures of $\mathcal{F}[U]$ are trees whose internal vertices are unlabelled and
whose leaves has as labels the elements of $U$.

For a tree
$\T\in \CF[U]$, denote by $\mathrm{Iv}(\T)$ the set of the internal vertices of $\T$. For an
internal vertex $v$, we denote by $\T_v$ the subtree of $\T$ that has root $v$ and vertices
all the descendants of $v$ in $\T$. Let $U_v$ be the set of leaves of $\T_v$. This allows
us to identify the vertex $v$ with $U_v$, we shall use the set $U_v$ as a label for the vertex $v$.
 
Let $\{v_1, v_2,...,v_k\}$ be the set of sons of $v$. Each of them is either an internal vertex, or a leaf. We denote by $\pi_{v}$ the
partition of $U_v$ induced by the branching at $v$:
$$\pi_{v}=\{U_{v_i}\mid i=1,2,\dots,k\}, $$
        
        \noindent each $U_{v_i}$ being the set of leaves of the tree $\T_{v_i}$, if $v_i$ is an internal vertex, and $U_{v_i}=\{v_i\}$ if not.
        For a species $M$  having the form
        $$M=X+M_{2^+},$$ the species of $M$-enriched Schr\"oder trees is the solution to
        the implicit equation  \be\label{ec17} \CF_M=X+M_{2^+}(\CF_M) \eeq Using (\ref{ec17}), we
        obtain the following recursive description $\CF_M[U]$. If $U$ is an unitary set, the only
        tree in  $\CF_M[U]$ consists of only one leaf labelled with the element of $U$ (the
        singleton-leaf tree). If the cardinal of $|U|\geq 2$, an element  $\TT$ of
        $\CF_M[U]$ is a pair $(\{\TT_B\}_{B\in\pi_r},m_r)$, where for each $B$ in
        the partition $\pi_r$, $\TT_B\in \mathcal{F}_M[B]$ and $m_r\in M_{2^+}[\pi_r]$.
        Iterating this recursive description we get an explicit expression  of the set $\CF_M[U]$;
        
        $$\CF_M[U]=X[U]+\sum_{\T\in \CF[U]} \{\T\}\times \prod_{v\in \mathrm{Iv}(\T)}
        M_{2^+}[\pi_v].$$

In other words, a tree $\TT$ in  $\mathcal{F}_M[U]$ is a Schr\"oder tree $\T\in
\CF[U]$ together with a structure $m_v\in M[\pi_v]$ for each internal vertex $v\in
\mathrm{Iv}(\T)=\mathrm{Iv}(\TT)$ (see Fig. \ref{Schro}).

 If $(M,\eta)$ is a set operad, for a tree $\TT$ in $\mathcal{F}_M[U]$, $\widehat{\eta}(\TT)$
is the element of $M[U]$ obtained by applying the operad product recursively on each
level of the tree. In this way, if $\TT$ is the singleton-leaf tree,
 $\widehat{\eta}(\TT)$ is equal to the singleton structure in $M[U]$. Otherwise, $\TT$ is of
the form $(\{\TT_B\}_{B\in\pi_r}, m_r)$, where $m_r$ is the structure attached to the root
and $\{\TT_B\}_{B\in\pi_r}$ is the assembly of smaller trees whose roots are the sons of
$r$. We define $\widehat{\eta}(\TT)$ by

\be\widehat{\eta}(\TT)=\eta(\{\widehat{\eta}(\TT_B))\}_{B\in \pi_r}, m_r).\eeq
The previous recursive procedure can be replace by any other that systematically apply the products on each internal vertex of the tree, finishing with the root. By associativity of the operad product the result will be the same.
\begin{theo}\label{antipode} The antipode $S$ of  $\CN_M$ is as follows:
for $\alpha\in \Tt(M_{2^+})$,
\begin{equation}
\label{ec18} S(t_{\alpha})= \begin{cases}1&\mbox{if $\alpha=\bullet$} \\\sum_{\substack{\TT\in \mathcal{F}_M \\
\widehat{\eta}(\TT)=m}}\,(-1)^{\mid \mathrm{Iv}(\TT)\mid}\,\prod_{v\in
\mathrm{Iv}(\TT)}t_{\tau(m_v)}&\mbox{otherwise}.\end{cases}
\end{equation}
\noindent where $m$ is an $M$-structure of type $\alpha.$
\end{theo}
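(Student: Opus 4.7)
The plan is to induct on $|U|$, where $m\in M[U]$ is any chosen representative of $\alpha$, and to exploit the standard recursion for the antipode in a connected graded commutative bialgebra. Grading $\CN_M$ by setting $\deg(t_{\tau(m)}):=|U|-1$ makes $\CN_M$ connected graded (the degree-$0$ part being $\KK\cdot t_\bullet=\KK$), so $S$ is forced on the augmentation ideal by
$$S(t_\alpha)=-t_\alpha-\sum_{(t_\alpha)}S(t_{\alpha,(1)})\,t_{\alpha,(2)},$$
the sum running over Sweedler components of the reduced coproduct $\bar\Delta(t_\alpha)=\Delta(t_\alpha)-t_\alpha\otimes 1-1\otimes t_\alpha$. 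The base case is $|U|=2$: formula (\ref{Deltanat}) shows directly that $t_\alpha$ is primitive (only the one-block and the discrete partitions of $U$ arise), so $S(t_\alpha)=-t_\alpha$, which matches the theorem since the unique element of $\CF_M[U]$ is the corolla carrying the single internal label $m$.

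For the inductive step I would expand $\bar\Delta(t_\alpha)$ using (\ref{Deltanat}). The term $1\otimes t_\alpha$ corresponds to the discrete partition $\pi$ (then $a$ is all singletons and $m'_\pi=m$), and $t_\alpha\otimes 1$ to the one-block partition $\pi=\{U\}$; what survives in $\bar\Delta(t_\alpha)$ is indexed by factorizations $\eta(a,m'_\pi)=m$ with $1<|\pi|<|U|$, so every block $B\in\pi$ satisfies $|B|<|U|$ and the inductive hypothesis applies to each $S(t_{\tau(m_B)})$. Since $\CN_M$ is commutative, $S$ is an algebra homomorphism, hence $S\bigl(\prod_{B\in\pi}t_{\tau(m_B)}\bigr)=\prod_{B\in\pi}S(t_{\tau(m_B)})$ is a product of the signed tree sums supplied by induction.

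The combinatorial heart of the argument is the bijection: a non-trivial factorization $(a=\{m_B\}_{B\in\pi},\,m'_\pi)$ of $m$ together with a choice, for every $B\in\pi$, of a Schr\"oder tree $\TT_B\in\CF_M[B]$ with $\hat\eta(\TT_B)=m_B$, corresponds exactly to a Schr\"oder tree $\TT\in\CF_M[U]$ with $\hat\eta(\TT)=m$ whose root partition $\pi_r=\pi$ is not discrete, obtained by grafting the $\TT_B$ at a new root labelled by $m'_\pi$. Under this correspondence $\Iv(\TT)=\{r\}\sqcup\bigsqcup_B\Iv(\TT_B)$, so the product over internal vertices picks up exactly the factor $t_{\tau(m'_\pi)}=t_{\tau(m_r)}$ coming from the right tensor slot, and the sign becomes $(-1)^{1+\sum_B|\Iv(\TT_B)|}$; combined with the outer minus from the recursion this is precisely $(-1)^{|\Iv(\TT)|}$. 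The remaining $-t_\alpha$ term in the recursion supplies the corolla (single internal vertex, discrete root partition, root label $m$), whose existence and uniqueness follows from $\eta(\mathrm{singletons},m)=m$; summed together these cover all of $\{\TT\in\CF_M[U]:\hat\eta(\TT)=m\}$.

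The most delicate point I foresee is the case and sign bookkeeping: verifying that the corolla tree is accounted for exactly once via the $-t_\alpha$ term, that the one-block and discrete partitions are correctly removed when passing from $\Delta$ to $\bar\Delta$, and that the grafting bijection inserts precisely one new internal vertex so that the $-1$ of the recursion promotes $(-1)^{\sum|\Iv(\TT_B)|}$ to the desired $(-1)^{|\Iv(\TT)|}$. Independence of the representative $m\in M[U]$ follows from the same equivariance argument already used to make (\ref{Deltanat}) well defined.
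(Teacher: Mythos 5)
Your proposal is correct and follows essentially the same route as the paper: both arguments rest on peeling off the root of an $M$-enriched Schr\"oder tree via $\CF_M=X+M_{2^+}(\CF_M)$, which turns the signed tree sum into the recursion $\varphi(t_\alpha)=-t_\alpha-\sum'\varphi(t_{\tau(a)})\,t_{\tau(m_r)}$ over nontrivial factorizations, and then invoke uniqueness of the convolution inverse of the identity. The only (cosmetic) difference is packaging --- you run an induction on $|U|$ against the standard connected-graded antipode recursion, while the paper verifies $\varphi\ast I=e\circ\epsilon$ directly --- and your explicit bookkeeping of the corolla term and of the two trivial factorizations is, if anything, slightly more careful than the paper's.
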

\begin{proof}

Denote by $\varphi:\NA\rightarrow\NA$  the algebra homomorphism defined by the right hand
side of Eq. (\ref{ec18}). If we denote by $\bullet$ the type of the singleton structure of $M$, by definition $\varphi(t_{\bullet})=\varphi(1)=1$. For $\alpha\neq \bullet$,
by the recursive definition of $\widehat{\eta}$,
\begin{eqnarray}
\varphi(t_{\alpha})&=&\sum_{\substack{\eta(a,m_r)=m\\ \tau(m_r)\neq
\tau(m)}}(-1)t_{\tau(m_r)}\sum_{\{\widehat{\eta}(\mathrm{T}_B)\}_{B\in\pi_r}=a}
\prod_{B\in\pi_r}(-1)^{\mid \mathrm{Iv(\mathrm{T_B})}\mid}\,\prod_{v\in
\mathrm{Iv}(\mathrm{T}_B)}t_{\tau(m_v)} \\
&=&\sum_{\substack{\eta(a,m_r)=m\\ \tau(m_r)\neq \tau(m)}}(-1)t_{\tau(m_r)}
\prod_{B\in\pi_r}\sum_{\widehat{\eta}(\mathrm{T}_B)=m_B}(-1)^{\mid
\mathrm{Iv(\mathrm{T_B})}\mid}\,\prod_{v\in \mathrm{Iv}(\mathrm{T}_B)}t_{\tau(m_v)}\\
&=&\sum_{\substack{\eta(a,m_r)=m\\ \tau(m_r)\neq \tau(m)}}(-1)t_{\tau(m_r)}
\prod_{B\in\pi_r}\varphi(t_{\tau(m_B)})\\&=&-\sum_{\substack{\eta(a,m_r)=m\\ \tau(m_r)\neq
\tau(m)}}t_{\tau(m_r)}\varphi(t_{\tau(a)}).
\end{eqnarray}
Hence, $(\varphi\ast I)(t_{\alpha})=(e\circ\epsilon)(t_{\alpha})$, and then $\varphi=S$.
\end{proof}

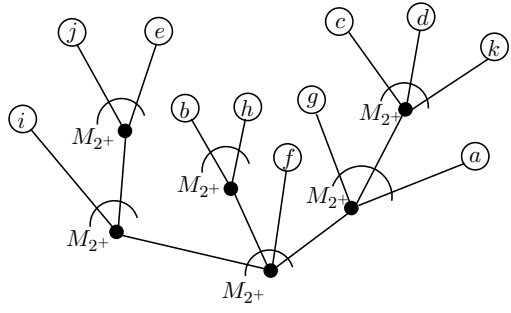
\begin{figure}\begin{center}
\scalebox{0.8} 
{
        \begin{pspicture}(0,-2.5464618)(8.379531,2.5064619)
        \psdots[dotsize=0.24200001](4.3599997,-1.9535382)
        \psdots[dotsize=0.24200001](5.6999993,-0.9135382)
        \psdots[dotsize=0.24200001](3.6999998,-0.59353817)
        \psdots[dotsize=0.24200001](1.7999996,-1.3135382)
        \psdots[dotsize=0.24200001](1.9399993,0.3664618)
        \psdots[dotsize=0.24200001](6.5999994,0.7264618)
        \pscircle[linewidth=0.025999999,dimen=outer](5.4999995,2.186462){0.24}
        \psline[linewidth=0.025999999cm](5.8199997,-0.8735382)(7.5799994,-0.1735382)
        \psline[linewidth=0.025999999cm](4.4599996,-1.8935382)(5.6599994,-0.9935382)
        \psline[linewidth=0.025999999cm](4.3799996,-1.9535382)(4.6199994,-0.29353818)
        \psline[linewidth=0.025999999cm](4.3399997,-1.9535382)(3.7399995,-0.6335382)
        \psline[linewidth=0.025999999cm](4.2999997,-1.9335382)(1.8199995,-1.3535382)
        \psline[linewidth=0.025999999cm](1.8199995,-1.3535382)(1.9599994,0.3864618)
        \psline[linewidth=0.025999999cm](1.8199995,-1.3335382)(0.3799995,0.3864618)
        \psline[linewidth=0.025999999cm](5.74,-0.9335382)(6.5599995,0.6464618)
        \psline[linewidth=0.025999999cm](5.6799994,-0.8335382)(5.1199994,0.6464618)
        \psline[linewidth=0.025999999cm](6.68,0.7064618)(7.9799995,1.5664618)
        \psline[linewidth=0.025999999cm](6.6199994,0.78646183)(6.8399997,2.0464618)
        \psline[linewidth=0.025999999cm](6.5799994,0.7264618)(5.6599994,2.0064619)
        \psline[linewidth=0.025999999cm](3.6999998,-0.59353817)(3.9399993,0.5664618)
        \psline[linewidth=0.025999999cm](3.7199996,-0.6135382)(3.0399992,0.5664618)
        \psline[linewidth=0.025999999cm](1.9599994,0.3064618)(2.4599993,1.8064618)
        \psline[linewidth=0.025999999cm](1.9599994,0.32646182)(1.1399997,1.8064618)
        \pscircle[linewidth=0.025999999,dimen=outer](6.8599997,2.2664618){0.24}
        \pscircle[linewidth=0.025999999,dimen=outer](8.08,1.7664618){0.24}
        \pscircle[linewidth=0.025999999,dimen=outer](7.7599993,-0.053538192){0.24}
        \pscircle[linewidth=0.025999999,dimen=outer](0.24,0.5664618){0.24}
        \pscircle[linewidth=0.025999999,dimen=outer](5.0399995,0.8864618){0.24}
        \pscircle[linewidth=0.025999999,dimen=outer](4.6399994,-0.07353819){0.24}
        \pscircle[linewidth=0.025999999,dimen=outer](3.9799993,0.7664618){0.24}
        \pscircle[linewidth=0.025999999,dimen=outer](2.9399993,0.7464618){0.24}
        \pscircle[linewidth=0.025999999,dimen=outer](2.4999993,2.0264618){0.24}
        \pscircle[linewidth=0.025999999,dimen=outer](1.0599993,2.0064619){0.24}
        \usefont{T1}{ptm}{m}{n}
        \rput(7.759062,-0.059006944){$a$}
        \usefont{T1}{ptm}{m}{n}
        \rput(2.9290624,0.7255243){$b$}
        \usefont{T1}{ptm}{m}{n}
        \rput(5.519062,2.1655242){$c$}
        \usefont{T1}{ptm}{m}{n}
        \rput(6.889062,2.2655244){$d$}
        \usefont{T1}{ptm}{m}{n}
        \rput(4.659062,-0.094475694){$f$}
        \usefont{T1}{ptm}{m}{n}
        \rput(2.5190618,2.0055244){$e$}
        \usefont{T1}{ptm}{m}{n}
        \rput(5.059062,0.8758368){$g$}
        \usefont{T1}{ptm}{m}{n}
        \rput(3.969062,0.7455243){$h$}
        \usefont{T1}{ptm}{m}{n}
        \rput(0.21906175,0.5455243){$i$}
        \usefont{T1}{ptm}{m}{n}
        \rput(1.069062,1.9958369){$j$}
        \psarc[linewidth=0.025999999](4.3399997,-2.0135381){0.4}{16.69925}{184.76364}
        \psarc[linewidth=0.025999999](5.8683586,-0.725179){0.5083592}{352.1909}{160.34618}
        \psarc[linewidth=0.025999999](1.7599996,-1.1935382){0.4}{16.69925}{184.76364}
        \psarc[linewidth=0.025999999](1.8799995,0.5064618){0.4}{16.69925}{184.76364}
        \psarc[linewidth=0.025999999](3.6199996,-0.29353818){0.4}{16.69925}{184.76364}
        \psarc[linewidth=0.025999999](6.5799994,0.7664618){0.4}{356.42365}{165.96375}
        \usefont{T1}{ptm}{m}{n}
        \rput(1.4190623,0.2358368){$M_{2^+}$}
        \usefont{T1}{ptm}{m}{n}
        \rput(8.069062,1.7455243){$k$}
        \usefont{T1}{ptm}{m}{n}
        \rput(6.199062,0.6413507){$M_{2^+}$}
        \usefont{T1}{ptm}{m}{n}
        \rput(5.3190618,-0.7386493){$M_{2^+}$}
        \usefont{T1}{ptm}{m}{n}
        \rput(1.3390617,-1.4586493){$M_{2^+}$}
        \usefont{T1}{ptm}{m}{n}
        \rput(3.1790617,-0.5586493){$M_{2^+}$}
        \usefont{T1}{ptm}{m}{n}
        \rput(3.9190617,-2.3186493){$M_{2^+}$}
        \end{pspicture}}\end{center}\caption{$M$-enriched Schr\"oder tree}\label{Schro}\end{figure}

As an example of this formula, we compute, in the Hopf algebra $\CN_{\Gr}$ the antipode of the almost complete graph $K_4-e$ (see Fig. \ref{antipodegr}) 
\be\label{casikcuatro}S(t_{\casikcuatro})=-t_{\casikcuatro}+3t_{\ltres}t_{\ldos}.\eeq

\begin{figure}[h]
\begin{center}
\scalebox{0.7} 
{
\begin{pspicture}(0,-9.41)(15.46,9.43)
\psdots[dotsize=0.32](4.279375,4.8453126)
\psdots[dotsize=0.32](3.879375,6.4453125)
\psline[linewidth=0.04cm](3.779375,6.5053124)(3.08,7.69)
\psline[linewidth=0.04cm](3.839375,6.5653124)(3.74,8.03)
\psline[linewidth=0.04cm](3.859375,6.465312)(4.58,7.81)
\usefont{T1}{ptm}{m}{n}
\rput(2.9251564,7.9653125){\Large c}
\usefont{T1}{ptm}{m}{n}
\rput(3.7073438,8.325313){\Large d}
\usefont{T1}{ptm}{m}{n}
\rput(4.6670313,8.105312){\Large b}
\psarc[linewidth=0.03](3.849375,6.7753124){0.71}{27.349876}{148.49573}
\psline[linewidth=0.04cm](4.199375,4.9653125)(3.84,6.5099998)
\psline[linewidth=0.04cm](4.319375,4.9653125)(4.82,6.0499997)
\usefont{T1}{ptm}{m}{n}
\rput(4.8321877,6.305312){\Large a}
\psarc[linewidth=0.03](4.3093753,5.2953124){0.71}{35.537678}{141.58194}
\psellipse[linewidth=0.04,dimen=outer](5.0793753,5.3253126)(0.24,0.26)
\psellipse[linewidth=0.04,dimen=outer](6.659375,5.3053126)(0.78,0.8)
\usefont{T1}{ptm}{m}{n}
\rput(6.2851562,5.5453124){\Large c}
\usefont{T1}{ptm}{m}{n}
\rput(5.0521874,5.3453126){\Large a}
\psline[linewidth=0.04cm](5.299375,5.3053126)(5.899375,5.3053126)
\psellipse[linewidth=0.04,dimen=outer](2.859375,6.9453125)(0.26,0.26)
\psellipse[linewidth=0.04,dimen=outer](2.879375,6.0453124)(0.26,0.26)
\psellipse[linewidth=0.04,dimen=outer](1.929375,6.0453124)(0.25,0.26)
\usefont{T1}{ptm}{m}{n}
\rput(2.8070312,6.8853126){\Large b}
\usefont{T1}{ptm}{m}{n}
\rput(2.8073437,6.0453124){\Large d}
\usefont{T1}{ptm}{m}{n}
\rput(1.9051563,6.0653124){\Large c}
\psline[linewidth=0.04cm](2.879375,6.7053123)(2.899375,6.325312)
\psline[linewidth=0.04cm](2.139375,6.0653124)(2.619375,6.0453124)
\usefont{T1}{ptm}{m}{n}
\rput(6.6673436,5.3085938){\Large d}
\usefont{T1}{ptm}{m}{n}
\rput(7.0270314,5.228594){\Large b}
\usefont{T1}{ptm}{m}{n}
\rput(3.1375,3.2203124){\LARGE $\widehat{\eta}(T)=\eta$}
\psellipse[linewidth=0.04,dimen=outer](4.679375,3.2203124)(0.24,0.26)
\psellipse[linewidth=0.04,dimen=outer](6.5493746,2.6303124)(1.13,1.11)
\psline[linewidth=0.04cm](4.9393754,3.2003124)(5.5793753,3.1003125)
\psellipse[linewidth=0.04,dimen=outer](5.889375,2.4203124)(0.25,0.26)
\psellipse[linewidth=0.04,dimen=outer](3.7593749,2.1403124)(0.24,0.26)
\psellipse[linewidth=0.04,dimen=outer](4.779375,2.1203125)(0.26,0.26)
\psellipse[linewidth=0.04,dimen=outer](4.7993746,1.2203125)(0.26,0.26)
\psellipse[linewidth=0.04,dimen=outer](3.849375,1.2203125)(0.25,0.26)
\psline[linewidth=0.04cm](4.7993746,1.8803126)(4.819375,1.5003123)
\psline[linewidth=0.04cm](4.0593753,1.2403123)(4.5393753,1.2203125)
\psline[linewidth=0.04cm](3.999375,2.2003124)(4.5593753,2.2003124)
\psline[linewidth=0.04cm](3.7593749,1.9003124)(3.7793748,1.4803125)
\psline[linewidth=0.04cm](3.9593751,2.0203123)(4.659375,1.4203124)
\usefont{T1}{ptm}{m}{n}
\rput(3.1284373,1.7653124){\huge =}
\psellipse[linewidth=0.04,dimen=outer](6.759375,3.2803123)(0.26,0.26)
\psellipse[linewidth=0.04,dimen=outer](6.779375,2.3803124)(0.26,0.26)
\psline[linewidth=0.04cm](6.779375,3.0403125)(6.7993746,2.6603124)
\psline[linewidth=0.04cm](6.099375,2.4203124)(6.5793753,2.4003124)
\usefont{T1}{ptm}{m}{n}
\rput(3.8051562,1.2403123){\Large c}
\usefont{T1}{ptm}{m}{n}
\rput(4.767031,2.1203125){\Large b}
\usefont{T1}{ptm}{m}{n}
\rput(4.7473435,1.1803124){\Large d}
\usefont{T1}{ptm}{m}{n}
\rput(3.7521875,2.1603124){\Large a}
\usefont{T1}{ptm}{m}{n}
\rput(5.825156,2.4403124){\Large c}
\usefont{T1}{ptm}{m}{n}
\rput(6.767031,3.2603123){\Large b}
\usefont{T1}{ptm}{m}{n}
\rput(6.7473435,2.3803124){\Large d}
\usefont{T1}{ptm}{m}{n}
\rput(4.6521873,3.2403126){\Large a}
\psdots[dotsize=0.32](11.019376,4.8853126)
\psdots[dotsize=0.32](10.599376,6.5653124)
\psline[linewidth=0.04cm](10.499375,6.625313)(9.76,7.9900002)
\psline[linewidth=0.04cm](10.559376,6.6853123)(10.54,8.09)
\psline[linewidth=0.04cm](10.579374,6.5853124)(11.4,7.87)
\psarc[linewidth=0.03](10.569374,6.8953123){0.71}{27.349876}{148.49573}
\usefont{T1}{ptm}{m}{n}
\rput(9.632188,8.265312){\Large a}
\usefont{T1}{ptm}{m}{n}
\rput(11.405156,8.025312){\Large c}
\usefont{T1}{ptm}{m}{n}
\rput(10.527031,8.285313){\Large b}
\psline[linewidth=0.04cm](10.939376,5.0053124)(10.6,6.65)
\psline[linewidth=0.04cm](11.059376,5.0053124)(11.66,6.35)
\psarc[linewidth=0.03](11.049376,5.3353124){0.71}{35.537678}{141.58194}
\psellipse[linewidth=0.04,dimen=outer](11.819374,5.3653126)(0.24,0.26)
\psellipse[linewidth=0.04,dimen=outer](13.399376,5.3453126)(0.78,0.8)
\psline[linewidth=0.04cm](12.039374,5.3453126)(12.639376,5.3453126)
\psellipse[linewidth=0.04,dimen=outer](9.399376,7.3453126)(0.26,0.26)
\psellipse[linewidth=0.04,dimen=outer](8.319375,7.2853127)(0.26,0.26)
\psellipse[linewidth=0.04,dimen=outer](8.489375,6.4453125)(0.25,0.26)
\psline[linewidth=0.04cm](8.579375,7.2653127)(9.179375,7.3253126)
\psline[linewidth=0.04cm](8.419375,6.645313)(8.379375,7.0853124)
\usefont{T1}{ptm}{m}{n}
\rput(8.292188,7.2853127){\Large a}
\usefont{T1}{ptm}{m}{n}
\rput(8.445156,6.4253125){\Large c}
\usefont{T1}{ptm}{m}{n}
\rput(9.367031,7.3053126){\Large b}
\usefont{T1}{ptm}{m}{n}
\rput(11.727344,6.6053123){\Large d}
\usefont{T1}{ptm}{m}{n}
\rput(13.412188,5.0053124){\Large a}
\usefont{T1}{ptm}{m}{n}
\rput(12.985156,5.5053124){\Large c}
\usefont{T1}{ptm}{m}{n}
\rput(13.527031,5.5053124){\Large b}
\usefont{T1}{ptm}{m}{n}
\rput(11.787344,5.3653126){\Large d}
\usefont{T1}{ptm}{m}{n}
\rput(10.6375,3.2303123){\LARGE $\widehat{\eta}(T)=\eta$}
\psellipse[linewidth=0.04,dimen=outer](15.159375,2.8103125)(0.24,0.26)
\psellipse[linewidth=0.04,dimen=outer](13.209375,3.1203125)(1.13,0.95)
\psline[linewidth=0.04cm](14.299376,2.9503124)(14.939375,2.8503125)
\usefont{T1}{ptm}{m}{n}
\rput(11.988438,1.2353123){\huge =}
\psellipse[linewidth=0.04,dimen=outer](13.759376,3.5103126)(0.26,0.26)
\psellipse[linewidth=0.04,dimen=outer](12.679375,3.4503124)(0.26,0.26)
\psellipse[linewidth=0.04,dimen=outer](12.849375,2.6103125)(0.25,0.26)
\psline[linewidth=0.04cm](12.939375,3.4303124)(13.539375,3.4903123)
\psline[linewidth=0.04cm](12.779374,2.8103125)(12.739374,3.2503123)
\psellipse[linewidth=0.04,dimen=outer](12.659374,1.8103125)(0.24,0.26)
\psellipse[linewidth=0.04,dimen=outer](13.679376,1.7903124)(0.26,0.26)
\psellipse[linewidth=0.04,dimen=outer](13.699375,0.8903124)(0.26,0.26)
\psellipse[linewidth=0.04,dimen=outer](12.749375,0.8903124)(0.25,0.26)
\psline[linewidth=0.04cm](13.699375,1.5503124)(13.719375,1.1703125)
\psline[linewidth=0.04cm](12.959374,0.9103125)(13.439376,0.8903124)
\psline[linewidth=0.04cm](12.899375,1.8703123)(13.459374,1.8703123)
\psline[linewidth=0.04cm](12.659374,1.5703125)(12.679376,1.1503124)
\psline[linewidth=0.04cm](12.859376,1.6903125)(13.559375,1.0903124)
\usefont{T1}{ptm}{m}{n}
\rput(12.685156,0.9303124){\Large c}
\usefont{T1}{ptm}{m}{n}
\rput(13.667031,1.7703124){\Large b}
\usefont{T1}{ptm}{m}{n}
\rput(12.632188,1.8503125){\Large a}
\usefont{T1}{ptm}{m}{n}
\rput(13.687344,0.9103125){\Large d}
\usefont{T1}{ptm}{m}{n}
\rput(12.785156,2.6503124){\Large c}
\usefont{T1}{ptm}{m}{n}
\rput(13.727032,3.4903123){\Large b}
\usefont{T1}{ptm}{m}{n}
\rput(15.107344,2.8103125){\Large d}
\usefont{T1}{ptm}{m}{n}
\rput(12.612187,3.4903126){\Large a}
\psdots[dotsize=0.32](10.659374,-2.8846874)
\psline[linewidth=0.04cm](10.579374,-2.7646875)(9.8,-1.52)
\psline[linewidth=0.04cm](10.699374,-2.7646875)(11.0,-1.3599999)
\psline[linewidth=0.04cm](10.659374,-2.7446873)(10.44,-1.3399999)
\psarc[linewidth=0.03](10.689376,-2.7946877){0.71}{35.537678}{141.58194}
\psline[linewidth=0.04cm](10.759375,-2.7846875)(11.52,-1.5799999)
\psellipse[linewidth=0.04,dimen=outer](11.799375,-2.2846875)(0.24,0.26)
\psellipse[linewidth=0.04,dimen=outer](12.819375,-2.3046875)(0.26,0.26)
\psellipse[linewidth=0.04,dimen=outer](12.839375,-3.2046876)(0.26,0.26)
\psellipse[linewidth=0.04,dimen=outer](11.889376,-3.2046876)(0.25,0.26)
\psline[linewidth=0.04cm](12.099376,-3.1846876)(12.579374,-3.2046876)
\psline[linewidth=0.04cm](12.039374,-2.2246873)(12.599376,-2.2246873)
\psline[linewidth=0.04cm](11.799375,-2.5246878)(11.819375,-2.9446871)
\psline[linewidth=0.04cm](11.999374,-2.4046874)(12.699374,-3.0046878)
\psline[linewidth=0.04cm](12.78,-2.5)(12.819375,-2.964687)
\usefont{T1}{ptm}{m}{n}
\rput(11.065156,-1.1046876){\Large c}
\usefont{T1}{ptm}{m}{n}
\rput(10.4270315,-1.1846875){\Large b}
\usefont{T1}{ptm}{m}{n}
\rput(12.8073435,-3.1846874){\Large d}
\usefont{T1}{ptm}{m}{n}
\rput(11.825156,-3.1846876){\Large c}
\usefont{T1}{ptm}{m}{n}
\rput(12.807032,-2.3446875){\Large b}
\usefont{T1}{ptm}{m}{n}
\rput(11.772187,-2.3046875){\Large a}
\usefont{T1}{ptm}{m}{n}
\rput(9.672188,-1.3446875){\Large a}
\usefont{T1}{ptm}{m}{n}
\rput(11.587344,-1.2446874){\Large d}
\psdots[dotsize=0.32](5.2793756,-4.6784377)
\psdots[dotsize=0.32](4.8393755,-2.9784374)
\psline[linewidth=0.04cm](4.7993746,-3.0184376)(4.24,-1.85375)
\psline[linewidth=0.04cm](4.8793745,-3.0584376)(5.16,-1.67375)
\psarc[linewidth=0.03](4.8693743,-2.748438){0.71}{47.385944}{148.49573}
\usefont{T1}{ptm}{m}{n}
\rput(4.1673436,-1.5984375){\Large d}
\usefont{T1}{ptm}{m}{n}
\rput(5.1721873,-1.4784375){\Large a}
\psline[linewidth=0.04cm](5.1993756,-4.5584373)(4.82,-2.87375)
\psline[linewidth=0.04cm](5.3193746,-4.5584373)(5.54,-3.11375)
\psarc[linewidth=0.03](5.3093753,-4.5884376){0.71}{35.537678}{141.58194}
\psellipse[linewidth=0.04,dimen=outer](5.9393754,-4.5384374)(0.24,0.26)
\psellipse[linewidth=0.04,dimen=outer](7.2493753,-4.6084375)(0.51,0.57)
\psline[linewidth=0.04cm](6.1593757,-4.5584373)(6.7593756,-4.5584373)
\psline[linewidth=0.04cm](7.7393756,-4.6184373)(8.3393755,-4.6184373)
\psellipse[linewidth=0.04,dimen=outer](8.559375,-4.6384373)(0.26,0.26)
\usefont{T1}{ptm}{m}{n}
\rput(7.2273436,-4.3984375){\Large d}
\usefont{T1}{ptm}{m}{n}
\rput(5.885156,-4.5184374){\Large c}
\usefont{T1}{ptm}{m}{n}
\rput(7.2121873,-4.7784376){\Large a}
\usefont{T1}{ptm}{m}{n}
\rput(8.547031,-4.6784377){\Large b}
\psellipse[linewidth=0.04,dimen=outer](3.9393754,-2.458438)(0.26,0.26)
\psellipse[linewidth=0.04,dimen=outer](2.8593755,-2.5184376)(0.26,0.26)
\psline[linewidth=0.04cm](3.1193745,-2.5384376)(3.7193744,-2.478438)
\usefont{T1}{ptm}{m}{n}
\rput(3.9073439,-2.4584374){\Large d}
\usefont{T1}{ptm}{m}{n}
\rput(2.8121874,-2.4984374){\Large a}
\psline[linewidth=0.04cm](5.3793755,-4.578438)(6.1,-3.39375)
\usefont{T1}{ptm}{m}{n}
\rput(5.6051564,-2.8584375){\Large c}
\usefont{T1}{ptm}{m}{n}
\rput(6.1870313,-3.0784376){\Large b}
\usefont{T1}{ptm}{m}{n}
\rput(6.0375,-6.6196876){\LARGE $\widehat{\eta}(T)=\eta$}
\psellipse[linewidth=0.04,dimen=outer](7.1793747,-7.7396874)(0.24,0.26)
\psellipse[linewidth=0.04,dimen=outer](8.199375,-7.7596874)(0.26,0.26)
\psellipse[linewidth=0.04,dimen=outer](8.219376,-8.659688)(0.26,0.26)
\psellipse[linewidth=0.04,dimen=outer](7.2693753,-8.659688)(0.25,0.26)
\psline[linewidth=0.04cm](8.219376,-7.9996877)(8.239374,-8.379687)
\psline[linewidth=0.04cm](7.4193745,-7.6796875)(7.9793744,-7.6796875)
\psline[linewidth=0.04cm](7.1793747,-7.9796877)(7.1993756,-8.399688)
\psline[linewidth=0.04cm](7.3793755,-7.8596873)(8.079374,-8.459687)
\usefont{T1}{ptm}{m}{n}
\rput(6.5484366,-8.114688){\huge =}
\psline[linewidth=0.04cm](9.119375,-7.0196877)(9.119375,-6.6196876)
\psellipse[linewidth=0.04,dimen=outer](7.8193746,-6.5996876)(0.24,0.26)
\psellipse[linewidth=0.04,dimen=outer](9.129375,-6.7896876)(0.51,0.69)
\psline[linewidth=0.04cm](8.039375,-6.6196876)(8.639376,-6.6196876)
\psline[linewidth=0.04cm](9.619375,-6.6796875)(10.219376,-6.6796875)
\psellipse[linewidth=0.04,dimen=outer](10.439376,-6.6996875)(0.26,0.26)
\usefont{T1}{ptm}{m}{n}
\rput(7.7651563,-6.5996876){\Large c}
\usefont{T1}{ptm}{m}{n}
\rput(9.107344,-6.4196877){\Large d}
\usefont{T1}{ptm}{m}{n}
\rput(9.092188,-7.1796875){\Large a}
\usefont{T1}{ptm}{m}{n}
\rput(7.2051563,-8.619688){\Large c}
\usefont{T1}{ptm}{m}{n}
\rput(8.187031,-7.7796874){\Large b}
\usefont{T1}{ptm}{m}{n}
\rput(7.1521873,-7.6996875){\Large a}
\usefont{T1}{ptm}{m}{n}
\rput(8.207344,-8.639688){\Large d}
\psline[linewidth=0.04cm](7.4993744,-8.639688)(7.9793754,-8.659688)
\usefont{T1}{ptm}{m}{n}
\rput(10.407031,-6.7196875){\Large b}
\psframe[linewidth=0.04,dimen=outer](13.32,0.03)(2.36,-9.41)
\psframe[linewidth=0.04,dimen=outer](15.44,9.41)(0.0,-0.03)
\psline[linewidth=0.04cm](7.76,9.37)(7.78,-0.01)
\psline[linewidth=0.04cm](15.4,9.41)(15.44,0.01)
\psframe[linewidth=0.04,dimen=outer](13.36,0.01)(8.76,-3.67)
\end{pspicture}
}
\end{center}\caption{Schr\"oder trees in the antipode formula for $K_4-e$ in $\CN_{\Gr}$ }\label{antipodegr}
\end{figure}

\subsection {Weights and algebra maps}

Let $M$ be an arbitrary connected with $|M[1]|=1$ and   $\mathcal{A}$ a $\KK$-algebra. An
algebra map
$$\omega:\NA\rightarrow \mathcal{A}$$ is called a weight on $M$ and the pair $(M,\omega),$
denoted by $M^{\omega}$, a weighted species. For a finite set $U$, the inventory of
$M^{\omega}[U]$, $|M[U]|_{\omega}$, is defined to be \be |M[U]|_{\omega}=\sum_{m\in
M[U]}\omega(t_{\tau(m)})\eeq

 This definition of weight and weighted species agrees with that given in \cite{B-L-L} if
we add the extra requirement that the structure of $M[1]$ has weight $1$.

The generating function $M^{\omega}(x)$, a formal power series with coefficients in
$\mathcal{A}$, is defined by \be
M^{\omega}(x)=x+\sum_{n=2}^{\infty}|M[n]|_{\omega}\frac{x^n}{n!}.\eeq \noindent Since the
number of structures of type $\alpha$ in $M[n]$ is equal to $\frac{n!}{\aut(\alpha)}$,
\be \label{Mseries}M^{\omega}(x)=x+\sum_{n=2}^{\infty}\left(\sum_{\alpha\in\Tt(M)[n]}
\frac{\omega(t_{\alpha})}{\aut(\alpha)}\right)x^n.\end{equation}

Let $M^{\omega_1}$ and $N^{\omega_2}$ be two $\mathcal{A}$-weighted positive species. The
substitution $M^{\omega_1}(N^{\omega_2})$ is defined as the weighted species
$(M(N),\omega_3)$, $\omega_3$ defined by \be\label{compos} \omega_3
(t_{\tau(a,m)})=\omega_2(t_{\tau(a)})\,.\,\omega_1(m)=\prod_{B\in\pi}
\omega_2(t_{\tau(n_B)})\,.\,\omega_1(t_{\tau(m)}),\eeq \noindent
$(a,m)=(\{n_B\}_{B\in\pi},m)$ being an arbitrary structure of $M(N)$.

The generating series as in Eq. (\ref{Mseries}) will be called {\em $M$-series} with
coefficients in $\mathcal{A}$.

Generating series transform the set theoretical operation of substitution into
substitution of formal power series (see \cite{B-L-L}): \be
M^{\omega_1}(N^{\omega_2})(x)=M^{\omega_1}(N^{\omega_2}(x)).\eeq \noindent

When $(M,\eta)$ is a set-operad, the structure of Hopf algebra on $\NA$ provides the set
of algebra morphisms $\mathrm{Al}(\NA,\mathcal{A})$ with the standard group structure \be
(\omega_1\ast\omega_2)(t_{\alpha})=(\mu\circ(\omega_1\otimes\omega_2)\circ\Delta)(t_{\alpha})=
\sum_{\eta(a,m')=m}\omega_1(t_{\tau(a)}).\omega_2(t_{\tau(m')}),\eeq  $\mu$ being the
product of $\mathcal{A}$. Recall that the identity of this group is
$e_{\mathcal{A}}\circ\epsilon$. The following proposition establishes an interesting link
between substitution of formal power series and the group structure of
$\mathrm{Al}(\NA,\mathcal{A})$.

\begin{prop}
For a set-operad $(M,\eta)$ and two $\mathcal{A}$-weights on $M$, $\omega_1,\ \omega_2$,
we have \be M^{\omega_1}(M^{\omega_2})(x)=M^{\omega_2\ast\omega_1}(x).\eeq
\end{prop}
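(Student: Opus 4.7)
The plan is to prove the identity by computing the coefficient of $x^n/n!$ on both sides for every $n\geq 1$ and showing they agree. At $n=1$ both sides equal $1$, since the unique singleton structure has weight $1$ and $(\omega_2\ast\omega_1)(t_\bullet)=(\omega_2\ast\omega_1)(1)=1$ as $\omega_2\ast\omega_1$ is an algebra morphism. So I would restrict attention to $n\geq 2$.

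First I would expand the left-hand side directly. By the definition of the substituted weighted species $M^{\omega_1}(M^{\omega_2})=(M(M),\omega_3)$ in equation (\ref{compos}), the coefficient of $x^n/n!$ in $M^{\omega_1}(M^{\omega_2})(x)$ is the inventory
$$|M(M)[n]|_{\omega_3}=\sum_{(a,m')\in M(M)[n]}\omega_2(t_{\tau(a)})\,\omega_1(t_{\tau(m')}).$$
Next I would reorganize this sum by pushing forward along the operad product $\eta\colon M(M)\to M$. Partitioning $M(M)[n]$ according to the image $m=\eta(a,m')$ gives
$$|M(M)[n]|_{\omega_3}=\sum_{m\in M[n]}\ \sum_{\eta(a,m')=m}\omega_2(t_{\tau(a)})\,\omega_1(t_{\tau(m')}).$$
By the definition of the $\ast$-product (read off the coproduct $\Delta(t_\alpha)=\sum t_{\tau(a)}\otimes t_{\tau(m'_\pi)}$, with $\omega_2$ in the first slot and $\omega_1$ in the second), the inner sum equals $(\omega_2\ast\omega_1)(t_{\tau(m)})$. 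In particular it depends only on the type $\tau(m)$, so grouping by isomorphism type and using that $M[n]$ contains $n!/\aut(\alpha)$ structures of each type $\alpha$ yields
$$|M(M)[n]|_{\omega_3}=n!\sum_{\alpha\in\Tt(M)[n]}\frac{(\omega_2\ast\omega_1)(t_\alpha)}{\aut(\alpha)}.$$
Dividing by $n!$ and matching with the formula (\ref{Mseries}) applied to the weight $\omega_2\ast\omega_1$ completes the identification of both generating series.

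The entire argument is a book-keeping exercise whose only substantive content is that the natural coproduct $\Delta$ was designed precisely so that the convolution product on $\mathrm{Al}(\NA,\mathcal{A})$ records the fibres of $\eta$; nothing beyond (\ref{Deltanat}), (\ref{compos}) and (\ref{Mseries}) is needed. The only delicate point is keeping track of the order of factors: since the pattern $m'$ in a pair $(a,m')$ is the structure that the outer weighted species $M^{\omega_1}$ contributes to the substitution, it carries the weight $\omega_1$, while the assembly $a$ carries $\omega_2$; this matches the order in $(\omega_2\ast\omega_1)$ and explains why the $\ast$-product that appears on the right-hand side has its factors swapped relative to the order in which $\omega_1$ and $\omega_2$ occur on the left.
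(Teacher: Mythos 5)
Your proposal is correct and follows essentially the same route as the paper: both reduce the claim to the equality of inventories $|M(M)[n]|_{\omega_3}=|M[n]|_{\omega_2\ast\omega_1}$ and establish it by identifying the sum over pairs $(a,m')\in M(M)[n]$ with the sum over $m\in M[n]$ of the fibre sums $\sum_{\eta(a,m')=m}\omega_2(t_{\tau(a)})\,\omega_1(t_{\tau(m')})=(\omega_2\ast\omega_1)(t_{\tau(m)})$. You merely run the computation in the opposite direction (from $M(M)$ down to $M$) and spell out the grouping by isomorphism type via the $n!/\aut(\alpha)$ count, which the paper absorbs into the notion of inventory and Eq.\ (\ref{Mseries}); the remark about the order of the factors in $\omega_2\ast\omega_1$ is exactly the point the paper encodes in its statement that the group of $M$-series is \emph{anti}-isomorphic to $\mathrm{Al}(\NA,\mathcal{A})$.
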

\begin{proof}We only have to prove that for every $n$,
$|M[n]|_{\omega_2\ast\omega_1}=|M(M)[n]|_{\omega_3},$  $\omega_3$ the weight defined as
in Eq. (\ref{compos}). \begin{eqnarray} |M[n]|_{\omega_2\ast\omega_1}&=&\sum_{m\in
M[n]}(\omega_2\ast\omega_1)(m)=\sum_{m\in
M[n]}\sum_{\eta(a,m')=m}\omega_2(t_{\tau(a)})\,.\,\omega_1(t_{\tau(m')})\\
&=&\sum_{(a,m')\in
M(M)[n]}\omega_2(t_{\tau(a)})\,.\,\omega_1(t_{\tau(m')})=|M(M)[n]|_{\omega_3}\end{eqnarray}
\end{proof}

The previous proposition  can be reformulated as follows:
\begin{prop} For a set-operad $(M,\eta)$, the $M$-generating series with coefficients in $\mathcal{A}$ form a group
with respect to the operation of substitution. This group is anti-isomorphic to the group
of algebra maps $\mathrm{Al}(\NA,\mathcal{A})$.\end{prop}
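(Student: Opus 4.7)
The previous proposition does most of the work, delivering the identity $M^{\omega_1}(M^{\omega_2})(x) = M^{\omega_2 \ast \omega_1}(x)$. This shows at once that substitution of $M$-series is closed and that the assignment $\Phi : \omega \mapsto M^{\omega}(x)$ converts convolution $\ast$ on $\mathrm{Al}(\NA, \mathcal{A})$ into reversed substitution of series. The plan is to deduce from this that the $M$-series form a group under substitution, with $\Phi$ realizing the claimed anti-isomorphism. So the proof has three small steps (identity, inverses, bijectivity) on top of the already established multiplicative compatibility.

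For the neutral element, the identity of $(\mathrm{Al}(\NA, \mathcal{A}), \ast)$ is $e := e_{\mathcal{A}} \circ \epsilon$; since $\epsilon(t_{\alpha}) = 0$ for every $\alpha \neq \bullet$, one has $|M[n]|_{e} = 0$ for all $n \geq 2$, and therefore $M^{e}(x) = x$, which is indeed the two-sided identity for substitution of formal series. For inverses, the algebra $\NA$ is commutative, so its antipode $S$ is an algebra endomorphism; hence $\omega \circ S$ lies in $\mathrm{Al}(\NA, \mathcal{A})$ for every $\omega$, and the antipode axiom gives
\[
(\omega \circ S) \ast \omega \;=\; \omega \ast (\omega \circ S) \;=\; e .
\]
Applying $\Phi$ together with the displayed identity from the previous proposition, $M^{\omega \circ S}(x)$ is the two-sided compositional inverse of $M^{\omega}(x)$; in particular, compositional inverses of $M$-series exist and are themselves $M$-series. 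Associativity of substitution is the standard fact for formal power series (and can also be transported from associativity of $\ast$).

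Putting these pieces together, the $M$-series form a group under substitution, and $\Phi$ is a surjective group anti-homomorphism from $(\mathrm{Al}(\NA, \mathcal{A}), \ast)$ onto it. The main subtle point is the anti-isomorphism claim, which requires $\Phi$ to be a bijection. I would handle this by declaring the group of $M$-series to be the image of $\Phi$ endowed with the group law transported through $\Phi$; the anti-isomorphism is then built in, and the content of the statement reduces to the closure, identity, and inverse assertions above. All of these rest squarely on the commutative Hopf algebra structure of $\NA$, whose antipode is given explicitly by Theorem \ref{antipode}.
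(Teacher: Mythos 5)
Your proposal is correct and follows the same route as the paper, which offers no separate argument at all: it presents this proposition as a direct reformulation of the preceding one, exactly as you do, and your added verifications (the counit weight giving the series $x$, and $\omega\circ S$ giving the compositional inverse via the antipode axiom applied through the algebra map $\omega$) are sound and consistent with Corollary \ref{coro1} which the paper derives immediately afterwards. Your remark that the map $\omega\mapsto M^{\omega}(x)$ need not be injective (since only the sums $\sum_{\alpha}\omega(t_{\alpha})/\aut(\alpha)$ are visible in the coefficients) is a legitimate caveat that the paper itself glosses over, and your resolution of regarding the $M$-series as carrying their defining weight is the natural reading of the statement.
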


It is a generalization of proposition in \cite{Joni-Rota}, that relates the Fa\'a di
Bruno Hopf algebra with the substitution of exponential formal power series.

Let us denote by  $(f(x))^{\langle -1\rangle}$ the substitutional inverse of the
generating series $f(x)$. From the previous proposition we obtain the corollary
\begin{coro}\label{coro1}Let $I$ be de identity morphism of $\NA$, and
$S$ the antipode. Then \be M^S(x)=(M^I(x))^{\langle -1\rangle}.\eeq More generally, for a
weight $\omega$ on $M$, \be M^{\omega\circ S}(x)=(M^{\omega}(x))^{\langle -1\rangle}.\eeq

\end{coro}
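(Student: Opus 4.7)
The plan is to derive the corollary directly from the preceding proposition by recognizing $\omega \circ S$ as the convolution inverse of $\omega$ in the group $\mathrm{Al}(\NA,\mathcal{A})$.

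First I would compute the $M$-series of the identity of this group. By definition, $(e_{\mathcal{A}} \circ \epsilon)(t_{\alpha})$ equals $1$ if $\alpha = \bullet$ and $0$ otherwise, so every term of degree $\geq 2$ in the generating series (\ref{Mseries}) vanishes and
\[
M^{e_{\mathcal{A}}\circ\epsilon}(x)=x.
\]
This is the series that plays the role of the identity under substitution.

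Next I would show that, for any weight $\omega$ on $M$, the algebra map $\omega \circ S$ is the inverse of $\omega$ in $\mathrm{Al}(\NA,\mathcal{A})$. Since $S$ is the antipode of $\mathcal{N}_M$ we have $\mu_{\mathcal{N}_M}\circ(I\otimes S)\circ\Delta = e\circ\epsilon = \mu_{\mathcal{N}_M}\circ(S\otimes I)\circ\Delta$; post-composing with the algebra map $\omega$ and using that $\omega$ is multiplicative gives
\[
\omega \ast (\omega\circ S) \;=\; \mu_{\mathcal{A}}\circ(\omega\otimes\omega)\circ(I\otimes S)\circ\Delta \;=\; \omega \circ (e\circ\epsilon) \;=\; e_{\mathcal{A}}\circ\epsilon,
\]
and analogously $(\omega\circ S)\ast\omega = e_{\mathcal{A}}\circ\epsilon$.

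Now I would invoke the previous proposition with $\omega_1 = \omega$ and $\omega_2 = \omega\circ S$. This yields
\[
M^{\omega}\bigl(M^{\omega\circ S}(x)\bigr) \;=\; M^{(\omega\circ S)\ast\omega}(x) \;=\; M^{e_{\mathcal{A}}\circ\epsilon}(x) \;=\; x,
\]
and swapping the roles of $\omega_1$ and $\omega_2$ gives $M^{\omega\circ S}(M^{\omega}(x))=x$ as well. Hence $M^{\omega\circ S}(x) = (M^{\omega}(x))^{\langle-1\rangle}$, which is the second assertion. The first assertion follows by specializing $\omega = I$, the identity morphism of $\NA$.

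The only real care needed is tracking the order of composition, because the group $\mathrm{Al}(\NA,\mathcal{A})$ is \emph{anti}-isomorphic to the substitution group of $M$-series; checking that the two identities of the previous proposition give both a left and a right substitutional inverse is what makes the argument complete rather than a one-sided statement. No further computation is required.
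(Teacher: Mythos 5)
Your proof is correct and follows exactly the route the paper intends: the paper simply states that the corollary follows from the preceding proposition on the group (anti-)isomorphism, and you have filled in the implicit steps — that $M^{e_{\mathcal{A}}\circ\epsilon}(x)=x$, that $\omega\circ S$ is the convolution inverse of $\omega$ by the antipode axiom and multiplicativity of $\omega$, and that applying $M^{\omega_1}(M^{\omega_2})(x)=M^{\omega_2\ast\omega_1}(x)$ in both orders yields a two-sided substitutional inverse. Your care with the order reversal coming from the anti-isomorphism is exactly the right point to check, and nothing is missing.
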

From the corollary, since $M^{I}(x)=x+M_{2^+}^I(x)$,
$M^I(M^S(x))=M^S(x)+M_{2^+}^I(M^S(x))=x$, and then $M^S(x)$ satisfies the implicit
equation: \be M^S(x)=x-M_{2^+}^I(M^S(x))=x+M_{2^+}^{-I}(M^S(x)). \eeq From this implicit
equation we can recover our antipode formula of Theorem \ref{antipode}.

\section{Antipode Formulas}

\subsection{Fa\'a di Bruno}
For the Fa\'a Di Bruno Hopf algebra $\CN_{E_+}$ (Example \ref{Faa}), $\Delta_{E_+}$ is the map
\begin{eqnarray} \Delta_{E_+}(t_n)&=&\sum_k\left(\sum_{\substack{j_1+2j_2+3j_3+\dots=n\\
j_1+j_2+\dots=k}}\frac{n!}{1!^{j_1}j_1!2!^{j_2}j_2!3!^{j_3}j_3!\dots}t_2^{j_2}t_3^{j_3}\dots\right
)\otimes
t_k\\
&=&\sum_kB_{n,k}(1,t_2,t_3,\dots,t_n)\otimes t_k
\end{eqnarray}

\noindent where $$B_{n,k}(t_1,t_2,\dots,t_n)=\sum_{\substack{\pi\in\Pi[n]\\
|\pi|=k}}\prod_{B\in\pi}t_{|B|}=\sum_{\substack{j_1+2j_2++\dots nj_n=n\\ j_1+j_2+\dots
j_n=k}}\frac{n!}{1!^{j_1}j_1!2!^{j_2}j_2!\dots
n!^{j_n}j_n!}t_1^{j_1}t_2^{j_2}t_3^{j_3}\dots t_n^{j_n}$$ is the partial Bell polynomial.
$\CN_{E_+}$ is the Fa\'a Di Bruno Hopf algebra. The series
$$E_+^I(x)=x+\sum_{n=2}^{\infty}t_n\frac{x^n}{n!}$$ represents a generic exponential delta power
series, and $$E_+^{S}=x+\sum_{n=2}^{\infty}S(t_n)\frac{x^n}{n!}$$ its substitutional
inverse. Then, having a formula for the antipode is equivalent to have a Lagrange
inversion formula (see \cite{Joni-Rota}).
In \cite{Ehrenborg-Mendez} the following proposition was proved:
\begin{prop}\label{biyeccion} Let $N$ be a delta species. Consider the set $\mathcal{F}^{(k)}_{N}[n]$
of $N$-enriched Schr\"oder trees with leaves
$\{1,2,...,n\}$ and  $k$ internal vertices. There is a bijection $\psi$ between
$\mathcal{F}^{(k)}_{N}[n]$ and the set $\gamma_k(N_{2^{+}})[n+k-1]$ of $k$-assemblies of
$N_{2^+}$-structures over the set $[n+k-1]$.
\end{prop}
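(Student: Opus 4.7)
The plan is to construct the bijection $\psi$ by canonically relabelling the internal non-root vertices of each $N$-enriched Schr\"oder tree and then reading off, at every internal vertex, its ``local data'' (the set of its children together with its $N_{2^+}$-structure). A tree $\mathrm{T}\in\mathcal{F}^{(k)}_N[n]$ has exactly $n+(k-1)$ non-root vertices, so the target label set $[n+k-1]$ has just the right size.

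Concretely, given $\mathrm{T}$, I would first totally order the $k-1$ internal non-root vertices by an order intrinsic to the tree --- for instance, by the minimum leaf-label in their subtree, breaking ties with ancestors first --- and assign to the $i$-th such vertex the new label $n+i$. Now every non-root vertex of $\mathrm{T}$ bears a unique label in $[n+k-1]$. For each internal vertex $v$ (root included), let $B_v\subseteq[n+k-1]$ be the set of labels of the children of $v$. Since every non-root vertex has exactly one parent, $\{B_v\}_v$ is a partition of $[n+k-1]$ into $k$ blocks of size $\geq 2$ (because in a Schr\"oder tree every internal vertex has at least two children), and the enrichment of $\mathrm{T}$ places an $N_{2^+}$-structure on each block $B_v$. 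Setting $\psi(\mathrm{T})=(\{B_v\}_v,\{m_v\}_v)\in\gamma_k(N_{2^+})[n+k-1]$ completes the definition of the map.

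For the inverse, the key observation is that in $\psi(\mathrm{T})$ the parent--child relation among internal vertices is encoded by the labels: each $\ell\in\{n+1,\ldots,n+k-1\}$ sits in the block of its parent, and itself names the block of its children. Once one recovers the canonical labelling from the partition alone, the parent-pointer map $\ell\mapsto B_\ell$ rebuilds a rooted tree whose root is the unique block pointed to by no label; expanding each block back into an internal vertex and transporting the $N_{2^+}$-structures reconstructs $\mathrm{T}$.

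The main obstacle is precisely this last step: showing that the minimum-leaf/ancestor-first order on the tree side is intrinsic to the partition, so $\psi^{-1}$ is well-defined. A cleaner route that bypasses the labelling bookkeeping is to induct on $n$ using $\mathcal{F}_N=X+N_{2^+}(\mathcal{F}_N)$: a non-trivial tree decomposes at its root as $(\{\mathrm{T}_B\}_{B\in\pi_r},m_r)$, the inductive hypothesis bijects each $\mathrm{T}_B$ with a sub-assembly on a shifted label set of size $|B|+k_B-1$ (where $k_B=|\mathrm{Iv}(\mathrm{T}_B)|$, and $\sum_B k_B=k-1$), and these disjoint sub-assemblies glue with the root block (enriched by $m_r$) into an element of $\gamma_k(N_{2^+})[n+k-1]$; the coherent choice of which shifted label sets are used accounts for the $\binom{n+k-1}{\,|B|+k_B-1,\ldots\,}$-type counting and forces bijectivity. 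Either route yields the proposition.
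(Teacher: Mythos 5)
First, a point of reference: the paper does not actually prove this proposition --- it is imported verbatim from Ehrenborg--M\'endez \cite{Ehrenborg-Mendez} --- so your argument has to stand on its own rather than be measured against an internal proof. Your set-up is correct: a tree in $\mathcal{F}^{(k)}_N[n]$ has $n+k$ vertices and hence $n+k-1$ non-root vertices; the children-sets $\pi_v$ of the internal vertices partition the non-root vertices into $k$ blocks of size at least $2$; and transporting each $m_v\in N_{2^+}[\pi_v]$ along any relabelling of the non-root vertices by $[n+k-1]$ yields an element of $\gamma_k(N_{2^+})[n+k-1]$. But the entire content of the proposition is the choice of relabelling that makes this map \emph{invertible}, and that is exactly the step you leave open --- as you yourself flag. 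The inverse must decide, for an arbitrary partition of $[n+k-1]$ into $k$ blocks of size at least $2$, which block is the root's and which block is the children-set of the vertex labelled $n+i$ for each $i$, and one must prove that exactly one such assignment is acyclic and reproduces your subtree-minimum/ancestor-first order. This is a genuine fixed-point condition, not a reading-off: already for $n=4$, $k=3$ and the partition $\{\{5,6\},\{1,2\},\{3,4\}\}$, both $5$ and $6$ sit in the root's block, and deciding whether $\{1,2\}$ or $\{3,4\}$ is the block named by $5$ requires computing subtree minima of a tree you have not yet built. The inductive fallback does not repair this: ``the coherent choice of which shifted label sets are used \dots forces bijectivity'' is precisely the datum to be exhibited --- specifying that coherent choice \emph{is} the bijection.

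For comparison, the construction in \cite{Ehrenborg-Mendez} (generalizing Haiman--Schmitt) sidesteps the fixed-point issue by labelling internal vertices bottom-up: repeatedly select an internal vertex all of whose children already carry labels (say the one whose children-set has least minimum), record that children-set as a block, and collapse the vertex to a new leaf bearing the next fresh label $n+i$. The inverse is then well defined by a pigeonhole count: after $j-1$ blocks have been consumed, the $k-j$ still-unused labels among $n+j,\dots,n+k-1$ cannot meet all $k-j+1$ remaining blocks, so some remaining block lies entirely inside the currently available labels and the algorithm never stalls; taking the least such block makes the choice unique. Some argument of this kind --- for that labelling or for yours --- is indispensable; without it your proposal constructs a map but does not prove it is a bijection.
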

\begin{remark} The bijection of above generalizes that obtained by Haiman and Schmitt in
\cite{Haimann-Schmitt}. From the construction in \cite{Ehrenborg-Mendez} it is easy to see that
$\psi$ preserves the natural weights on $\mathcal{F}^{(k)}_{N}[n]$ and
$\gamma_k(N_{2^{+}})[n+k-1]$ respectively. More precisely, let $\omega_1$ be the weight
on $\mathcal{F}^{(k)}_{N}[n]$ assigning to each tree $\mathrm{T}$ the monomial
$\prod_{v\in\Iv(\mathrm{T})} t_{\tau(m_v)}$, and $\omega_2$ on
$\gamma_k(N_{2^{+}})[n+k-1]$ assigning the monomial $t_{\tau(a)}$ to each assembly $a$.
Then,
$$\omega_2(\psi(\mathrm{T}))=\omega_1(\mathrm{T})),$$ for every tree $\mathrm{T}\in \mathcal{F}^{(k)}_{N}[n]$.
\end{remark}
From the previous proposition and remark we get:
\begin{eqnarray}
S(t_n)&=&\sum_{\substack{\mathrm{T}\in \mathcal{F}_{E_+}[n]\\
\hat{\eta}(\mathrm{T})=\{[n]\}}}\,(-1)^{\mid \mathrm{Iv(\mathrm{T})}\mid}\,\prod_{v\in
\mathrm{Iv(T)}}t_{|\pi_v|}\nonumber\\
&=&\sum^{n-1}_{k=1}\sum_{\mathrm{T}\in \mathcal{F}^{(k)}_{E_+}[n]}\,(-1)^k\,\prod_{v\in
\mathrm{Iv(T)}}t_{|\pi_v|}\nonumber\\
&=&\sum^{n-1}_{k=1}(-1)^k \sum_{\pi\in
\gamma_k(E_{2+})[n+k-1]}\prod_{B\in\pi}t_{|B|}\\\label{antipofa} &=&\sum^{n-1}_{k=1}
(-1)^k B_{n+k-1, k}(0,t_2,t_3,...).\label{ec2}
\end{eqnarray}

This is the antipode formula obtained by Haiman and Schmitt in \cite{Haimann-Schmitt}.

We now study more readable forms of the antipode for the natural Hopf algebras corresponding to  the operads  $\Ep$, $\Gr$, $\Arb$, $\Arb_{L}$, $\Grp$. The latter three examples being cases of the family of operads $\Arb_{N}$, $N$ being a monoid. All these antipodes are obtained from our general formula by using essentially the same technique. The depth of an internal vertex of a Schr\"{o}der tree is defined to be the number of vertices in the path to the root (including the vertex itself). The technique is this:  color the internal nodes of the corresponding trees in formula (\ref{antipode}) with its depth, beginning with the root which is colored $1$. Each internal node instructs to carry out a product of the operad, until a final result is obtained, after doing all the products and finishing with the root. Since all those examples of operads are graphical (graphs with some extra added structure), the final result is some kind of graph, and each product in the construction of this graph is obtained by connecting some vertices with new edges. The Schr\"oder tree is then codified in the resulting graph by coloring their edges, according to the color (the depth) of the internal node in which they were created. In this way more readable antipode formulas are obtained in all the above mentioned cases. Before doing that, we introduce the notion of prime structure on a set operad.

\begin{defi}\label{prime}
A  element $m\in M[U]$ of an operad $(M,\eta)$ will be called a {\em prime structure} if it only can be factored trivially. More  precisely, if $m=\eta(a,m'_{\pi})$ then either $a$ is the assembly of singletons, $a=\{\bullet_u\}_{u\in U}$, and $m'_{\pi}$ is isomorphic to $m$, or $a=\{m\}$ and $m'_{\{U\}}=\{\bullet_{U}\}$ is the singleton structure with label $U$. 
Clearly, the types of primes of a set-operad are the primitive elements of the corresponding Hopf algebra
\be \Delta(t_{\tau(m)})=1\otimes t_{\tau(m)}+t_{\tau(m)}\otimes 1.\eeq
\end{defi}

\begin{figure}
\begin{center}
\scalebox{0.7} 
{
\begin{pspicture}(0,-4.2165)(15.019688,4.2165)
\psline[linewidth=0.04cm](8.78,0.85825)(8.36,2.37825)
\psline[linewidth=0.04cm](11.36,-1.34175)(12.94,-0.10175)
\pscircle[linewidth=0.04,dimen=outer,fillstyle=solid](10.59,1.23175){0.37}
\usefont{T1}{ptm}{m}{n}
\rput(10.566875,1.2089375){\LARGE 6}
\pscircle[linewidth=0.04,dimen=outer,fillstyle=solid](11.47,1.05175){0.37}
\usefont{T1}{ptm}{m}{n}
\rput(11.437187,1.0289375){\LARGE 5}
\usefont{T1}{ppl}{m}{n}
\rput(12.012032,0.36675){\Large 4}
\usefont{T1}{ptm}{m}{n}
\rput(11.064375,1.78175){\Large M}
\pscircle[linewidth=0.04,dimen=outer](11.09,1.22825){1.03}
\psline[linewidth=0.054cm](12.78,-3.58175)(13.52,-2.66175)
\psline[linewidth=0.054cm](12.36,-3.55825)(11.76,-2.73825)
\pscircle[linewidth=0.04,dimen=outer,fillstyle=solid](11.09,-1.40825){0.37}
\usefont{T1}{ptm}{m}{n}
\rput(11.066719,-1.4310625){\LARGE 9}
\pscircle[linewidth=0.04,dimen=outer,fillstyle=solid](11.59,-2.22825){0.37}
\usefont{T1}{ptm}{m}{n}
\rput(11.5725,-2.2510624){\LARGE 2}
\usefont{T1}{ptm}{m}{n}
\rput(10.744375,-2.07825){\Large M}
\pscircle[linewidth=0.04,dimen=outer](11.23,-1.85175){1.03}
\usefont{T1}{ppl}{m}{n}
\rput(12.267813,-2.49325){\Large 2}
\pscircle[linewidth=0.04,dimen=outer,fillstyle=solid](8.91,0.54825){0.37}
\usefont{T1}{ptm}{m}{n}
\rput(8.875937,0.5254375){\LARGE 8}
\usefont{T1}{ppl}{m}{n}
\rput(9.846562,-0.09325){\Large 5}
\usefont{T1}{ptm}{m}{n}
\rput(9.464375,0.38175){\Large M}
\pscircle[linewidth=0.04,dimen=outer](8.91,0.54825){0.89}
\pscircle[linewidth=0.04,dimen=outer,fillstyle=solid](13.87,0.53175){0.37}
\usefont{T1}{ptm}{m}{n}
\rput(13.834375,0.5089375){\LARGE 3}
\pscircle[linewidth=0.04,dimen=outer,fillstyle=solid](12.89,0.91175){0.37}
\usefont{T1}{ptm}{m}{n}
\rput(12.87625,0.8889375){\LARGE 4}
\usefont{T1}{ppl}{m}{n}
\rput(14.406406,0.02675){\Large 3}
\usefont{T1}{ptm}{m}{n}
\rput(13.384375,1.34175){\Large M}
\pscircle[linewidth=0.04,dimen=outer](13.41,0.78825){1.03}
\usefont{T1}{ppl}{m}{n}
\rput(9.003593,2.50675){\Large 6}
\pscircle[linewidth=0.04,dimen=outer,fillstyle=solid](8.15,3.14825){0.89}
\pscircle[linewidth=0.04,dimen=outer,fillstyle=solid](13.99,-1.93175){0.37}
\usefont{T1}{ptm}{m}{n}
\rput(13.919687,-1.9545625){\LARGE 1}
\usefont{T1}{ptm}{m}{n}
\rput(14.544375,-2.09825){\Large M}
\usefont{T1}{ppl}{m}{n}
\rput(14.865156,-2.63325){\Large 1}
\pscircle[linewidth=0.04,dimen=outer](13.99,-1.93175){0.89}
\pscircle[linewidth=0.072,linecolor=blue,dimen=outer,fillstyle=solid](12.580078,-3.66825){0.37}
\usefont{T1}{ptm}{m}{n}
\rput(12.509766,-3.6874688){\LARGE 10}
\psline[linewidth=0.054cm,arrowsize=0.05291667cm 2.0,arrowlength=1.4,arrowinset=0.4]{<->}(5.86,-1.6900625)(8.1,-1.6700625)
\usefont{T1}{ptm}{m}{n}
\rput(6.7990627,-1.2300625){\LARGE $\phi$}
\psline[linewidth=0.04cm](11.12,-1.06175)(11.14,0.19825)
\psline[linewidth=0.04cm](10.8,-1.20175)(9.46,-0.12175)
\pscircle[linewidth=0.04,dimen=outer,fillstyle=solid](8.09,3.17175){0.37}
\usefont{T1}{ptm}{m}{n}
\rput(8.067031,3.1489375){\LARGE 7}
\usefont{T1}{ptm}{m}{n}
\rput(8.704375,2.98175){\Large M}
\psdots[dotsize=0.28](1.92,2.3165)
\usefont{T1}{ppl}{m}{n}
\rput(1.6151563,2.3265){\large 6}
\psdots[dotsize=0.28](1.12,1.1365)
\usefont{T1}{ppl}{m}{n}
\rput(0.8176563,1.1465){\large 5}
\psdots[dotsize=0.28](2.82,-1.4835)
\usefont{T1}{ppl}{m}{n}
\rput(2.5176563,-1.4735){\large 3}
\psdots[dotsize=0.28](3.78,-4.0035)
\usefont{T1}{ppl}{m}{n}
\rput(3.458125,-3.9935){\large 1}
\psdots[dotsize=0.28](2.24,-0.1235)
\usefont{T1}{ppl}{m}{n}
\rput(1.9428124,-0.1135){\large 4}
\psdots[dotsize=0.28](2.56,-2.9635)
\usefont{T1}{ppl}{m}{n}
\rput(2.2590625,-2.9535){\large 2}
\psline[linewidth=0.054cm](3.74,-4.0435)(5.42,-3.2835)
\psline[linewidth=0.054cm](3.74,-3.9635)(2.62,-3.0035)
\pscircle[linewidth=0.04,dimen=outer,fillstyle=solid](5.75,-3.1135){0.37}
\usefont{T1}{ptm}{m}{n}
\rput(5.6796875,-3.1363125){\LARGE 1}
\psline[linewidth=0.054cm](2.5,-2.9235)(1.46,-1.9635)
\psline[linewidth=0.054cm](2.56,-2.9635)(3.9,-2.2635)
\pscircle[linewidth=0.04,dimen=outer,fillstyle=solid](4.09,-2.1535){0.37}
\usefont{T1}{ptm}{m}{n}
\rput(4.0725,-2.1763124){\LARGE 2}
\pscircle[linewidth=0.04,dimen=outer,fillstyle=solid](1.3200781,-1.8135){0.37}
\usefont{T1}{ptm}{m}{n}
\rput(1.2497656,-1.8327187){\LARGE 10}
\psline[linewidth=0.054cm](1.12,1.1165)(0.46,2.0365)
\psline[linewidth=0.054cm](1.14,1.1165)(2.18,-0.0835)
\psline[linewidth=0.054cm](2.22,-0.1235)(2.8,-1.4435)
\pscircle[linewidth=0.04,dimen=outer,fillstyle=solid](0.37,2.2465){0.37}
\usefont{T1}{ptm}{m}{n}
\rput(0.34671876,2.2236874){\LARGE 9}
\psline[linewidth=0.054cm](2.28,-0.1035)(2.48,1.1165)
\psline[linewidth=0.054cm](2.3,-0.1235)(3.3,0.6165)
\psline[linewidth=0.054cm](2.86,-1.4235)(3.82,-0.1635)
\psline[linewidth=0.054cm](2.88,-1.4635)(4.38,-0.8235)
\psline[linewidth=0.054cm](2.58,-2.9635)(2.82,-1.4035)
\psline[linewidth=0.054cm](1.14,1.1365)(1.86,2.2365)
\psline[linewidth=0.054cm](1.94,2.3365)(1.98,3.5565)
\psline[linewidth=0.054cm](1.98,2.3165)(3.08,3.0165)
\pscircle[linewidth=0.04,dimen=outer,fillstyle=solid](3.19,3.2465){0.37}
\usefont{T1}{ptm}{m}{n}
\rput(3.1670313,3.2236874){\LARGE 7}
\pscircle[linewidth=0.04,dimen=outer,fillstyle=solid](1.97,3.8465){0.37}
\usefont{T1}{ptm}{m}{n}
\rput(1.9359375,3.8236876){\LARGE 8}
\pscircle[linewidth=0.04,dimen=outer,fillstyle=solid](2.53,1.4265){0.37}
\usefont{T1}{ptm}{m}{n}
\rput(2.506875,1.4036875){\LARGE 6}
\pscircle[linewidth=0.04,dimen=outer,fillstyle=solid](3.45,0.8265){0.37}
\usefont{T1}{ptm}{m}{n}
\rput(3.4171875,0.8036875){\LARGE 5}
\pscircle[linewidth=0.04,dimen=outer,fillstyle=solid](3.95,0.0665){0.37}
\usefont{T1}{ptm}{m}{n}
\rput(3.93625,0.0436875){\LARGE 4}
\pscircle[linewidth=0.04,dimen=outer,fillstyle=solid](4.63,-0.6735){0.37}
\usefont{T1}{ptm}{m}{n}
\rput(4.594375,-0.6963125){\LARGE 3}
\psarc[linewidth=0.054](3.19,-1.1865){0.87}{-5.013114}{77.19573}
\usefont{T1}{ptm}{m}{n}
\rput(4.324375,-1.3365){\Large M}
\psarc[linewidth=0.054](2.29,-0.0665){0.87}{11.821488}{102.99461}
\usefont{T1}{ptm}{m}{n}
\rput(3.084375,-0.1365){\Large M}
\psarc[linewidth=0.054](2.25,2.2535){0.87}{27.07208}{62.878696}
\psarc[linewidth=0.054](0.93,1.1335){0.87}{27.07208}{62.878696}
\psarc[linewidth=0.054](4.29,-3.9865){0.87}{10.840305}{57.01148}
\usefont{T1}{ptm}{m}{n}
\rput(5.184375,-4.0365){\Large M}
\usefont{T1}{ptm}{m}{n}
\rput(3.064375,2.4235){\Large M}
\usefont{T1}{ptm}{m}{n}
\rput(1.764375,1.2635){\Large M}
\usefont{T1}{ptm}{m}{n}
\rput(3.344375,-2.9765){\Large M}
\psarc[linewidth=0.054](2.47,-2.9065){0.87}{9.6887865}{80.75389}
\psdots[dotsize=0.25,linecolor=blue](2.9,-3.6565)
\psdots[dotsize=0.25,linecolor=blue](1.8,-2.6365)
\psdots[dotsize=0.25,linecolor=blue](2.22,-0.8965)
\psdots[dotsize=0.25,linecolor=blue](1.32,0.3435)
\psdots[dotsize=0.25,linecolor=blue](0.4,1.4635)
\psdots[dotsize=0.25,linecolor=blue](1.62,2.9635)
\end{pspicture}
}\end{center}\caption{Isomorphism $\phi:\CF_{X.M}\leftrightarrow \Arb_{L(M)}$}
\label{biyectionsroder}\end{figure}

We recall the bijection between Schr\"oder trees enriched with a species of the form $XM$, $M$ a positive species, and the rooted trees enriched with the species $L(M)$ of linear orders of $M$-structures (see \cite{Ehrenborg-Mendez}). This bijection  $\phi$ establishes an isomorphism between the species $\CF_{XM}$ and $\Arb_{L(M)}$. It goes as follows. An  $XM$-enriched tree $\TT\in\CF_{XM}[U]$ is a rooted tree where each internal node $v \in \Iv(\TT)$ is
decorated with an element of $XM[\pi_v]$, $\pi_v$ standing for the set of sons of $v$. So, at each internal vertex $v$, a preferred son $p(v)$ is chosen and an element $m_v$ is placed on the rest of them $(m_v \in M[\pi_v-\{p(v)\}])$. By starting at the root of $\TT$,
we can construct a unique path from the root to a leaf (the distinguished leaf of $\TT$), by letting the
successor of an internal node be its preferred son. We shall call this path the main spine of $\TT$. Contract all the vertices in the main spine placing the leaf in to the root. We get that the fiber of the root becomes enriched with a tuple of structures in $M(\CF_{XM})$, one for each internal vertex in the main spine. Then, apply the same algorithm recursively over  all the remaining internal Schr\"{o}der trees (see Fig. \ref{biyectionsroder}). This correspondence is clearly reversible. 

To make evident the linear order on $M$-structures in each fiber of the rooted tree, we color them with the depth of the corresponding internal vertex of the Schr\"oder tree. When $M$ is a graphic species, the colors can be transported into the edges of the graph.

\subsection{Pointed Fa\'a di Bruno}
We call $\CN_{\Eb}$ the pointed Fa\'a di Bruno Hopf algebra (Example \ref{pointed}). 
From the formula for the coproduct
\begin{equation}\label{coproductpoint}
\Delta(nt_n)=\sum_{k=1}^{n}B_{n,k}(0,2t_2,3t_3,\dots)\otimes k t_k,\end{equation}
\noindent $\CN_{\Eb}$ is isomorphic to the Fa\'a Di Bruno Hopf algebra by identifying $nt_n$ with the generator $t_n$ of $\CN_{E_+}$.

We have the following identity between Bell polynomials: \be
\label{belles}
B_{n,k}(0,2t_2,3t_3,4t_4\dots)=\binom{n}{k}k!B_{n-k,k}(t_2,t_3,t_4\dots),\eeq \noindent
This identity is easy to prove: the left hand side is the inventory of pointed partitions
on $[n]$ with $k$ blocks, none of cardinal $1$. The right hand side is the same inventory, because there are $\binom{n}{k}$ ways of choosing the distinguished elements of the
blocks in each pointed partition, $B_{n-k,k}(t_2,t_3,\dots)$ is the inventory of ordinary
partitions with $k$ blocks, on the remaining $[n-k]$ elements (giving each block $B$ the
weight $t_{|B|+1}$). Finally, $k!$ is the number of ways of placing the distinguished
elements, one in each block of the ordinary partition.

Then, from Eq. (\ref{coproductpoint}),
\begin{equation*}
n\Delta_{\Eb}(t_n)=\sum_{k=1}^{n}\binom{n}{k}k!B_{n-k,k}(t_2,t_3,t_4\dots)\otimes k t_k.
\end{equation*}
The generating function $(\Eb)^I(x)$ is equal to \be
(\Eb)^I(x)=x+\sum_{n=2}^{\infty}nt_n\frac{x^n}{n!}=x\left(1+\sum_{n=1}^{\infty}t_{n+1}\frac{x^n}{n!}\right)\eeq

By corollary \ref{coro1}, $nS(t_n)$ is the coefficient of $\frac{x^n}{n!}$ in the inverse
of $(\Eb)^I(x)$. 
Since $\Ep_{2+}=XE_+,$  Schr\"oder trees in the antipode formula are elements of $\CF_{XE_+}$ which is isomorphic via  $\phi$ to $\Arb_{L(E_+)}$.  
 
 An element of $\Arb_{L(E_+)}[n]$ is a tree $T$ on $n$ vertices, whose fibers are enriched with tuples of sets. The sets form a partition $\pi$ of $[n]-\{r\}$, $r$ being the root of $T$. These kind of trees are called ordered trees on partitions and were enumerated in \cite[Corollary 3.2]{Ehrenborg-Mendez}. The number of such trees is $(n)^{(k)}=n(n+1)\dots(n+k-1)=(n+k-1)_k$, $k$ being the number of blocks of $\pi$. Each corolla decorating the original Shr\"oder tree corresponds uniquely to a block of the partition plus the vertex where it is attached in $T$. Hence, $B_{n-1,k}(t_2,t_3,\dots,t_{n-1})$
is the inventory of the corollas associated to the blocks of the partition and we have
 
\begin{eqnarray}
n S(t_n)&=&\sum_{k=1}^{n-1}(-1)^k
(n+k-1)_{k} B_{n-1,k}(t_2,t_3,t_4,\dots)\\&=&\sum_{k=1}^{n-1}(-1)^k
\binom{n+k-1}{k}k!B_{n-1,k}(t_2,t_3,t_4,\dots)\\&=&\sum_{k=1}^{n-1}(-1)^k
\binom{n+k-1}{k}\left[\frac{x^{n-1}}{(n-1)!}\right]\left(\sum_{j=1}^{\infty}t_{j+1}\frac{x^j}{j!}\right)^k
\\&=&\left[\frac{x^{n-1}}{(n-1)!}\right]\sum_{k=1}^{n-1}
\binom{-n\,}{\,k}\left(\sum_{j=1}^{\infty}t_{j+1}\frac{x^j}{j!}\right)^k\\&=&\left[\frac{x^{n-1}}{(n-1)!}\right]
\left(1+\sum_{j=1}^{\infty}t_{j+1}\frac{x^j}{j!}\right)^{-n}\\&=&\left[\frac{x^{n-1}}{(n-1)!}\right]
\left(\frac{(\Eb)^I(x)}{x}\right)^{-n}.
\end{eqnarray}
Which is the classical Lagrange inversion formula. The relationship of the trees $\Arb_{L(E_+)}$ with the classical Lagrange inversion formula was first pointed out in \cite{ArbolesChen}.

\subsection{The family  of Hopf algebras \texorpdfstring{$\CN_{\Arb_{N}}$}{NAN}}.
We study here the antipode for the Hopf algebras corresponding to the operad of $N$-enriched rooted trees, $N$ being a monoid. This gives us a family of Hopf algebras, the simplest of them being the cases $N=E$ and $N=L$, giving rise respectively to the Hopf algebra of rooted trees and to the Hopf algebra of planar rooted trees respectively. Because of the isomorphism $$\Grp=\Arb_{E(\Bc')},$$ $\Bc$  being the species of biconnected graphs, other distinguished (but not evident) member of the family is the natural Hopf algebra of pointed connected graphs which we study in a separate subsection. 

\subsubsection{The antipode of \texorpdfstring{$\CN_{\Arb}$}{CA}.}
Now we consider the  case of the operad $\Arb_{E}=\Arb$. Denote by $\mathfrak{a}$ the species of (nonrooted) trees. Since $\Arb$ is obtained by pointing $\mathfrak{a},$ $\Arb=\mathfrak{a}^{\bullet}=X.\mathfrak{a}'$, the natural transformation $\phi$ goes from $\CF_{\Arb_{2^+}}$ to $\Arb_{L(\mathfrak{a}'_+)}$, the species of trees enriched with tuples of structures of $\mathfrak{a}'_+$. Each component of the tuple in the fiber of a vertex $v$ is a tree in $\mathfrak{a}'_+$, which together with $v$ becomes a rooted tree.  Coloring the edges of each of this rooted trees which are components of the fibers  with the depth of the corresponding internal vertex we obtain what we call an admissible coloration.

\begin{defi}\label{admisibleNAP}
Let $T$ be a rooted tree. An {\em admissible edge coloration} of $T$ is a function from the set of edges of $T$, $\mathscr{E}(T)$,  to the set $\PP$ of positive integers, that satisfies
\begin{enumerate}
\item There is at least on edge of color $1$.
\item $c$ is weakly increasing in any path from the root to the leaves.
\item The root of each connected component of the subgraph  induced by edges colored $i$, $i\geq 2$, has at least one incident edge of color $i-1$. 
\end{enumerate}

\end{defi}
The elements of $\Arb_{L(\mathfrak{a}')}$ are then equivalently described as  trees colored with admissible colorations.
The following diagram commutes
\begin{center}
\begin{equation}\label{diagrama1}
\begin{tikzpicture}
  \node (C) {$\CF_{\Arb_{2^+}}$};
  \node (P) [below of=C] {$\Arb$};
  \node (Ai) [right of=C] {$\Arb_{L(\mathfrak{a}'_+)}$};
  \draw[<->] (C) to node {$\phi$} (Ai);
  \draw[->] (C) to node [swap] {$\widehat{\eta}$} (P);
  \draw[<-] (P) to node [swap] {$\upsilon$} (Ai);
\end{tikzpicture}
\end{equation} 
\end{center}

\begin{figure}
\begin{center}

\scalebox{0.6} 
{
\begin{pspicture}(0,-7.828922)(15.879687,7.828922)
\psline[linewidth=0.04cm](12.086062,-3.8266406)(11.806625,-2.692422)
\psline[linewidth=0.054cm](13.046625,-7.188922)(12.366625,-6.232422)
\psline[linewidth=0.04cm](13.266625,-7.112422)(13.261664,-6.212422)
\psline[linewidth=0.04cm](12.046625,-5.692422)(10.946625,-4.292422)
\psline[linewidth=0.04cm](12.186625,-5.652422)(12.146063,-4.126641)
\psline[linewidth=0.04cm](10.686625,-3.7324219)(10.266625,-2.212422)
\psline[linewidth=0.04cm](12.346625,-5.732422)(13.16,-4.2266407)
\pscircle[linewidth=0.04,dimen=outer,fillstyle=solid](11.716625,-2.418922){0.37}
\usefont{T1}{ptm}{m}{n}
\rput(11.6935,-2.4417343){\LARGE 6}
\pscircle[linewidth=0.04,dimen=outer,fillstyle=solid](12.176625,-4.038922){0.37}
\usefont{T1}{ptm}{m}{n}
\rput(12.143812,-4.061734){\LARGE 5}
\usefont{T1}{ppl}{m}{n}
\rput(12.438656,-4.7439218){\Large 4}
\pscircle[linewidth=0.04,dimen=outer,fillstyle=solid](12.216625,-5.9689217){0.37}
\usefont{T1}{ptm}{m}{n}
\rput(12.193344,-5.9917345){\LARGE 9}
\pscircle[linewidth=0.04,dimen=outer,fillstyle=solid](10.816625,-4.042422){0.37}
\usefont{T1}{ptm}{m}{n}
\rput(10.782562,-4.065234){\LARGE 8}
\usefont{T1}{ppl}{m}{n}
\rput(11.573188,-4.703922){\Large 5}
\pscircle[linewidth=0.04,dimen=outer,fillstyle=solid](14.436625,-4.628922){0.37}
\usefont{T1}{ptm}{m}{n}
\rput(14.401,-4.6517344){\LARGE 3}
\pscircle[linewidth=0.04,dimen=outer,fillstyle=solid](13.236625,-4.088922){0.37}
\usefont{T1}{ptm}{m}{n}
\rput(13.222875,-4.1117344){\LARGE 4}
\usefont{T1}{ppl}{m}{n}
\rput(10.730219,-3.063922){\Large 6}
\pscircle[linewidth=0.04,dimen=outer,fillstyle=solid](10.296625,-2.3189218){0.37}
\usefont{T1}{ptm}{m}{n}
\rput(10.273656,-2.3417344){\LARGE 7}
\usefont{T1}{ppl}{m}{n}
\rput(12.178657,-3.023922){\Large 4}
\psline[linewidth=0.054cm](13.466625,-7.212422)(14.206625,-6.292422)
\pscircle[linewidth=0.04,dimen=outer,fillstyle=solid](13.251664,-5.848922){0.37}
\usefont{T1}{ptm}{m}{n}
\rput(13.234164,-5.871734){\LARGE 2}
\usefont{T1}{ppl}{m}{n}
\rput(12.894438,-6.563922){\Large 2}
\usefont{T1}{ppl}{m}{n}
\rput(13.173031,-4.723922){\Large 3}
\pscircle[linewidth=0.04,dimen=outer,fillstyle=solid](14.356625,-5.962422){0.37}
\usefont{T1}{ptm}{m}{n}
\rput(14.286312,-5.9852343){\LARGE 1}
\usefont{T1}{ppl}{m}{n}
\rput(14.111781,-6.7639217){\Large 1}
\pscircle[linewidth=0.072,linecolor=blue,dimen=outer,fillstyle=solid](13.251664,-7.298922){0.37}
\usefont{T1}{ptm}{m}{n}
\rput(13.181352,-7.3181405){\LARGE 10}
\usefont{T1}{ppl}{m}{n}
\rput(13.833032,-5.163922){\Large 3}
\psline[linewidth=0.04cm](12.526625,-5.792422)(14.086625,-4.732422)
\usefont{T1}{ppl}{m}{n}
\rput(13.474438,-6.543922){\Large 2}
\psline[linewidth=0.04cm](15.0060625,-4.2066407)(14.986062,-4.1866407)
\psline[linewidth=0.054cm,arrowsize=0.05291667cm 2.0,arrowlength=1.4,arrowinset=0.4]{<-}(6.8405623,-5.3007345)(9.080563,-5.2807345)
\usefont{T1}{ptm}{m}{n}
\rput(7.839625,-4.9007344){\LARGE $\upsilon$}
\usefont{T1}{ptm}{m}{n}
\rput{-270.0}(11.346422,-13.646265){\rput(12.464781,-1.1624218){\LARGE $=$}}
\psline[linewidth=0.04cm](14.78,1.8889219)(14.78,1.5089219)
\psline[linewidth=0.04cm](13.82,4.608922)(14.04,4.288922)
\psline[linewidth=0.04cm](11.96,4.4089217)(11.96,4.7533593)
\psline[linewidth=0.04cm](11.94,5.188922)(11.94,5.5533595)
\pscircle[linewidth=0.04,dimen=outer,fillstyle=solid](11.95,5.862422){0.37}
\usefont{T1}{ptm}{m}{n}
\rput(11.926875,5.839609){\LARGE 6}
\pscircle[linewidth=0.04,dimen=outer,fillstyle=solid](11.95,5.002422){0.37}
\usefont{T1}{ptm}{m}{n}
\rput(11.917188,4.9796095){\LARGE 5}
\psline[linewidth=0.04cm](9.64,4.648922)(9.22,6.168922)
\psline[linewidth=0.04cm](11.84,2.2489219)(13.8,3.688922)
\usefont{T1}{ppl}{m}{n}
\rput(12.472032,3.797422){\Large 4}
\pscircle[linewidth=0.04,dimen=outer](11.96,5.128922){1.16}
\psline[linewidth=0.054cm](13.64,0.20892188)(14.38,1.1289219)
\psline[linewidth=0.054cm](13.22,0.23242188)(12.62,1.0524219)
\pscircle[linewidth=0.04,dimen=outer,fillstyle=solid](11.57,2.0724218){0.37}
\usefont{T1}{ptm}{m}{n}
\rput(11.546719,2.0496094){\LARGE 9}
\pscircle[linewidth=0.04,dimen=outer,fillstyle=solid](12.53,2.0724218){0.37}
\usefont{T1}{ptm}{m}{n}
\rput(12.5125,2.0496094){\LARGE 2}
\pscircle[linewidth=0.04,dimen=outer](12.09,1.9389219){1.03}
\usefont{T1}{ppl}{m}{n}
\rput(13.127812,1.2974219){\Large 2}
\pscircle[linewidth=0.04,dimen=outer,fillstyle=solid](9.77,4.338922){0.37}
\usefont{T1}{ptm}{m}{n}
\rput(9.735937,4.316109){\LARGE 8}
\usefont{T1}{ppl}{m}{n}
\rput(10.706562,3.6974218){\Large 5}
\pscircle[linewidth=0.04,dimen=outer](9.77,4.338922){0.89}
\pscircle[linewidth=0.04,dimen=outer,fillstyle=solid](14.73,4.8924217){0.37}
\usefont{T1}{ptm}{m}{n}
\rput(14.694375,4.8696094){\LARGE 3}
\pscircle[linewidth=0.04,dimen=outer,fillstyle=solid](13.83,4.8924217){0.37}
\usefont{T1}{ptm}{m}{n}
\rput(13.81625,4.8696094){\LARGE 4}
\usefont{T1}{ppl}{m}{n}
\rput(15.266406,3.817422){\Large 3}
\pscircle[linewidth=0.04,dimen=outer](14.27,4.578922){1.03}
\usefont{T1}{ppl}{m}{n}
\rput(9.843594,6.237422){\Large 6}
\pscircle[linewidth=0.04,dimen=outer,fillstyle=solid](9.01,6.938922){0.89}
\pscircle[linewidth=0.04,dimen=outer,fillstyle=solid](14.77,2.178922){0.37}
\usefont{T1}{ptm}{m}{n}
\rput(14.699688,2.1561093){\LARGE 1}
\usefont{T1}{ppl}{m}{n}
\rput(15.725156,1.1574218){\Large 1}
\pscircle[linewidth=0.04,dimen=outer](14.85,1.8589219){0.89}
\pscircle[linewidth=0.072,linecolor=blue,dimen=outer,fillstyle=solid](13.440078,0.122421876){0.37}
\usefont{T1}{ptm}{m}{n}
\rput(13.369765,0.103203125){\LARGE 10}
\psline[linewidth=0.054cm,arrowsize=0.05291667cm 2.0,arrowlength=1.4,arrowinset=0.4]{<->}(6.42,3.2606094)(8.66,3.2806094)
\usefont{T1}{ptm}{m}{n}
\rput(7.3590627,3.6806095){\LARGE $\phi$}
\psline[linewidth=0.04cm](11.66,2.408922)(12.0,3.9889219)
\psline[linewidth=0.04cm](11.36,2.388922)(10.32,3.668922)
\psline[linewidth=0.04cm](8.95,7.088922)(9.08,6.548922)
\psdots[dotsize=0.4,dotstyle=asterisk](9.15,6.3689218)
\psdots[dotsize=0.4,dotstyle=asterisk](12.06,1.3489219)
\pscircle[linewidth=0.04,dimen=outer,fillstyle=solid](8.95,7.362422){0.37}
\usefont{T1}{ptm}{m}{n}
\rput(8.9270315,7.339609){\LARGE 7}
\psdots[dotsize=0.4,dotstyle=asterisk](14.24,4.168922)
\psdots[dotsize=0.4,dotstyle=asterisk](11.96,4.208922)
\psline[linewidth=0.04cm](14.66,4.568922)(14.4,4.328922)
\psline[linewidth=0.04cm](11.76,1.7889218)(11.94,1.5089219)
\psline[linewidth=0.04cm](12.4,1.7489219)(12.22,1.5289218)
\psdots[dotsize=0.4,dotstyle=asterisk](14.77,1.2889218)
\psdots[dotsize=0.28](1.92,5.851609)
\usefont{T1}{ppl}{m}{n}
\rput(1.6151563,5.8616095){\large 6}
\psdots[dotsize=0.28](1.12,4.6716094)
\usefont{T1}{ppl}{m}{n}
\rput(0.8176563,4.681609){\large 5}
\psdots[dotsize=0.28](2.82,2.0516093)
\usefont{T1}{ppl}{m}{n}
\rput(2.5176563,2.0616093){\large 3}
\psdots[dotsize=0.28](3.78,-0.4683906)
\usefont{T1}{ppl}{m}{n}
\rput(3.458125,-0.45839062){\large 1}
\psdots[dotsize=0.28](2.24,3.4116094)
\usefont{T1}{ppl}{m}{n}
\rput(1.9428124,3.4216094){\large 4}
\psdots[dotsize=0.28](2.56,0.5716094)
\usefont{T1}{ppl}{m}{n}
\rput(2.2590625,0.58160937){\large 2}
\psline[linewidth=0.054cm](3.74,-0.5083906)(5.42,0.25160939)
\psline[linewidth=0.054cm](3.74,-0.42839062)(2.62,0.53160936)
\pscircle[linewidth=0.04,dimen=outer,fillstyle=solid](5.75,0.42160937){0.37}
\usefont{T1}{ptm}{m}{n}
\rput(5.6796875,0.3987969){\LARGE 1}
\psline[linewidth=0.054cm](2.5,0.6116094)(1.46,1.5716094)
\psline[linewidth=0.054cm](2.56,0.5716094)(3.9,1.2716094)
\pscircle[linewidth=0.04,dimen=outer,fillstyle=solid](4.09,1.3816093){0.37}
\usefont{T1}{ptm}{m}{n}
\rput(4.0725,1.3587968){\LARGE 2}
\pscircle[linewidth=0.04,dimen=outer,fillstyle=solid](1.3200781,1.7216094){0.37}
\usefont{T1}{ptm}{m}{n}
\rput(1.2497656,1.7023907){\LARGE 10}
\psline[linewidth=0.054cm](1.12,4.6516094)(0.46,5.5716095)
\psline[linewidth=0.054cm](1.14,4.6516094)(2.18,3.4516094)
\psline[linewidth=0.054cm](2.22,3.4116094)(2.8,2.0916095)
\pscircle[linewidth=0.04,dimen=outer,fillstyle=solid](0.37,5.7816095){0.37}
\usefont{T1}{ptm}{m}{n}
\rput(0.34671876,5.7587967){\LARGE 9}
\psline[linewidth=0.054cm](2.28,3.4316094)(2.48,4.6516094)
\psline[linewidth=0.054cm](2.3,3.4116094)(3.3,4.1516094)
\psline[linewidth=0.054cm](2.86,2.1116095)(3.82,3.3716094)
\psline[linewidth=0.054cm](2.88,2.0716093)(4.38,2.7116094)
\psline[linewidth=0.054cm](2.58,0.5716094)(2.82,2.1316094)
\psline[linewidth=0.054cm](1.14,4.6716094)(1.86,5.7716093)
\psline[linewidth=0.054cm](1.94,5.871609)(1.98,7.0916095)
\psline[linewidth=0.054cm](1.98,5.851609)(3.08,6.5516095)
\pscircle[linewidth=0.04,dimen=outer,fillstyle=solid](3.19,6.7816095){0.37}
\usefont{T1}{ptm}{m}{n}
\rput(3.1670313,6.7587967){\LARGE 7}
\pscircle[linewidth=0.04,dimen=outer,fillstyle=solid](1.97,7.3816094){0.37}
\usefont{T1}{ptm}{m}{n}
\rput(1.9359375,7.358797){\LARGE 8}
\pscircle[linewidth=0.04,dimen=outer,fillstyle=solid](2.53,4.9616094){0.37}
\usefont{T1}{ptm}{m}{n}
\rput(2.506875,4.938797){\LARGE 6}
\pscircle[linewidth=0.04,dimen=outer,fillstyle=solid](3.45,4.3616095){0.37}
\usefont{T1}{ptm}{m}{n}
\rput(3.4171875,4.338797){\LARGE 5}
\pscircle[linewidth=0.04,dimen=outer,fillstyle=solid](3.95,3.6016095){0.37}
\usefont{T1}{ptm}{m}{n}
\rput(3.93625,3.5787969){\LARGE 4}
\pscircle[linewidth=0.04,dimen=outer,fillstyle=solid](4.63,2.8616095){0.37}
\usefont{T1}{ptm}{m}{n}
\rput(4.594375,2.8387969){\LARGE 3}
\psarc[linewidth=0.054](3.39,2.4086094){0.87}{-5.013114}{77.19573}
\psarc[linewidth=0.054](2.23,3.4086094){0.87}{25.347755}{99.812035}
\psarc[linewidth=0.054](2.25,5.7886095){0.87}{27.07208}{62.878696}
\psarc[linewidth=0.054](0.93,4.668609){0.87}{27.07208}{62.878696}
\psarc[linewidth=0.054](4.29,-0.45139062){0.87}{10.840305}{57.01148}
\psarc[linewidth=0.054](2.47,0.62860936){0.87}{9.6887865}{80.75389}
\psdots[dotsize=0.25,linecolor=blue](2.9,-0.121390626)
\psdots[dotsize=0.25,linecolor=blue](1.8,0.8986094)
\psdots[dotsize=0.25,linecolor=blue](2.22,2.6386094)
\psdots[dotsize=0.25,linecolor=blue](1.32,3.8786094)
\psdots[dotsize=0.25,linecolor=blue](0.4,4.9986095)
\psdots[dotsize=0.25,linecolor=blue](1.62,6.4986095)
\pscircle[linewidth=0.04,dimen=outer](3.3602207,6.103359){0.15}
\psdots[dotsize=0.36,dotstyle=asterisk](3.3602207,5.4733596)
\psline[linewidth=0.04cm](3.36,5.9733596)(3.36,5.6733594)
\pscircle[linewidth=0.04,dimen=outer](1.9402206,5.4033594){0.15}
\psdots[dotsize=0.36,dotstyle=asterisk](1.9402206,4.7733593)
\psline[linewidth=0.04cm](1.94,5.2733593)(1.94,4.9733596)
\psdots[dotsize=0.36,dotstyle=asterisk](2.9454408,2.5733595)
\psline[linewidth=0.04cm](2.9454412,3.5133593)(2.9454412,3.1933594)
\psline[linewidth=0.04cm](2.9454412,3.1133595)(2.9454412,2.7933593)
\pscircle[linewidth=0.04,dimen=outer,fillstyle=solid](2.9454412,3.6033595){0.15}
\pscircle[linewidth=0.04,dimen=outer,fillstyle=solid](2.9454412,3.1633594){0.15}
\pscircle[linewidth=0.04,dimen=outer](4.2602205,2.2233593){0.15}
\psdots[dotsize=0.36,dotstyle=asterisk](4.5802207,1.6733594)
\psline[linewidth=0.04cm](4.32,2.1133595)(4.54,1.7733594)
\pscircle[linewidth=0.04,dimen=outer](4.960221,2.2433593){0.15}
\psline[linewidth=0.04cm](4.86,2.1333594)(4.64,1.7933594)
\pscircle[linewidth=0.04,dimen=outer](5.420221,-0.33664063){0.15}
\psdots[dotsize=0.36,dotstyle=asterisk](5.420221,-0.96664065)
\psline[linewidth=0.04cm](5.42,-0.46664062)(5.42,-0.7666406)
\psline[linewidth=0.04cm,arrowsize=0.05291667cm 2.0,arrowlength=1.4,arrowinset=0.4]{->}(1.98,-0.5866406)(1.98,-1.8666406)
\usefont{T1}{ptm}{m}{n}
\rput(2.2090626,-1.1866406){\LARGE $\widehat{\eta}$}
\pscircle[linewidth=0.04,dimen=outer](3.3802207,0.58335936){0.15}
\psdots[dotsize=0.36,dotstyle=asterisk](3.7002206,0.033359375)
\psline[linewidth=0.04cm](3.44,0.47335938)(3.66,0.13335937)
\pscircle[linewidth=0.04,dimen=outer](4.0802207,0.6033594){0.15}
\psline[linewidth=0.04cm](3.98,0.4933594)(3.76,0.15335937)
\psline[linewidth=0.04cm](2.7794375,-3.9866407)(2.5,-2.8524218)
\psline[linewidth=0.054cm](3.74,-7.348922)(3.06,-6.3924217)
\psline[linewidth=0.04cm](3.96,-7.272422)(3.955039,-6.3724217)
\psline[linewidth=0.04cm](2.74,-5.8524218)(1.64,-4.4524217)
\psline[linewidth=0.04cm](2.88,-5.812422)(2.8394375,-4.2866406)
\psline[linewidth=0.04cm](1.38,-3.892422)(0.96,-2.372422)
\psline[linewidth=0.04cm](2.96,-5.9524217)(3.9,-4.4266405)
\pscircle[linewidth=0.04,dimen=outer,fillstyle=solid](2.41,-2.5789218){0.37}
\usefont{T1}{ptm}{m}{n}
\rput(2.386875,-2.6017344){\LARGE 6}
\pscircle[linewidth=0.04,dimen=outer,fillstyle=solid](2.87,-4.1989217){0.37}
\usefont{T1}{ptm}{m}{n}
\rput(2.8371875,-4.2217345){\LARGE 5}
\pscircle[linewidth=0.04,dimen=outer,fillstyle=solid](2.91,-6.128922){0.37}
\usefont{T1}{ptm}{m}{n}
\rput(2.8867188,-6.1517344){\LARGE 9}
\pscircle[linewidth=0.04,dimen=outer,fillstyle=solid](1.51,-4.2024217){0.37}
\usefont{T1}{ptm}{m}{n}
\rput(1.4759375,-4.2252345){\LARGE 8}
\pscircle[linewidth=0.04,dimen=outer,fillstyle=solid](5.13,-4.788922){0.37}
\usefont{T1}{ptm}{m}{n}
\rput(5.094375,-4.811734){\LARGE 3}
\pscircle[linewidth=0.04,dimen=outer,fillstyle=solid](4.01,-4.268922){0.37}
\usefont{T1}{ptm}{m}{n}
\rput(3.99625,-4.291734){\LARGE 4}
\pscircle[linewidth=0.04,dimen=outer,fillstyle=solid](0.99,-2.478922){0.37}
\usefont{T1}{ptm}{m}{n}
\rput(0.96703124,-2.5017343){\LARGE 7}
\psline[linewidth=0.054cm](4.16,-7.3724217)(4.9,-6.4524217)
\pscircle[linewidth=0.04,dimen=outer,fillstyle=solid](3.945039,-6.008922){0.37}
\usefont{T1}{ptm}{m}{n}
\rput(3.927539,-6.0317345){\LARGE 2}
\pscircle[linewidth=0.04,dimen=outer,fillstyle=solid](5.05,-6.1224217){0.37}
\usefont{T1}{ptm}{m}{n}
\rput(4.9796877,-6.1452346){\LARGE 1}
\pscircle[linewidth=0.072,linecolor=blue,dimen=outer,fillstyle=solid](3.945039,-7.458922){0.37}
\usefont{T1}{ptm}{m}{n}
\rput(3.8747265,-7.478141){\LARGE 10}
\psline[linewidth=0.04cm](3.22,-5.9524217)(4.78,-4.8924217)
\psline[linewidth=0.04cm](5.68,-4.978922)(5.66,-4.958922)
\end{pspicture}
}\end{center}\caption{A Schr\"oder tree on $\Arb$ and isomorphic admissible colorations}\label{Schroder-colored}
\end{figure}
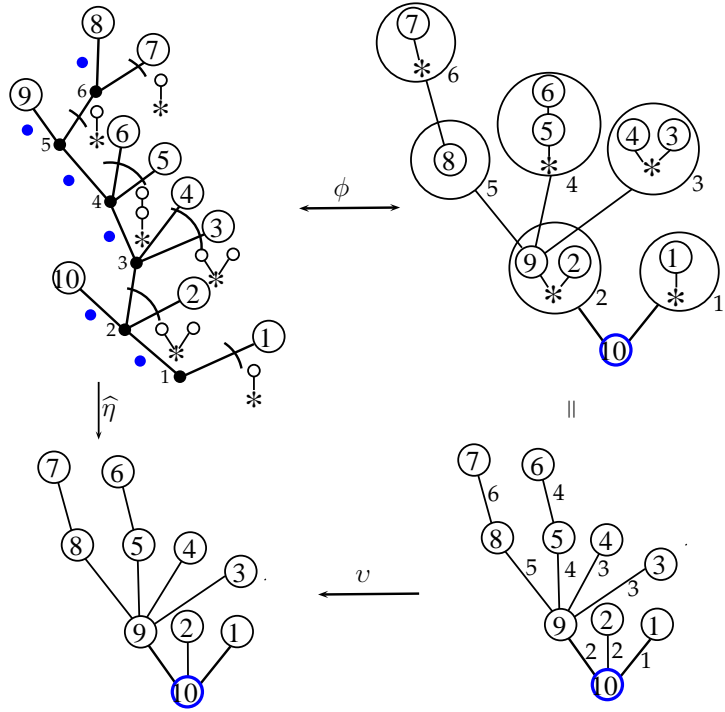

\noindent Where $\upsilon$ is the forgetful transformation which erases the colors of the trees in $\Arb_{L(\mathfrak{a}')}$. It can be easily checked by induction, verifying that contracting over the main spine without keeping track of the depth of internal vertices, is the same as applying the product $\eta$ iteratively all over it (see Fig. \ref{Schroder-colored}). Then we obtain the following antipode formula for the Hopf algebra $\CN_{\Arb}$.
\begin{equation}\label{antipodeA}
S(t_{\tau(T)})=\sum_{c}\prod_{i\geq 1}(-1)^{|a_c(i)|}t_{\tau(a_c(i))}.
\end{equation}
\noindent Here $c$ runs over all the admissible colorations of the tree $T$, $a_c(i)$ is the assembly of trees colored with color $i$.
\subsubsection{The general case \texorpdfstring{$\Arb_N$}{AN}.}
We now extend this result to the general case $\Arb_N$, which is a little more involved.\\
Since $\Arb_N=XN(\Arb_N)$, the bijection $\phi$ is as follows
 $$\CF_{(\Arb_N)_{2^+}}=\CF_{X.N_+(\Arb_N)}\stackrel{\phi}{\leftrightarrow}\Arb_{L(N_+(\Arb_N))}.$$
 The bijection $\phi$ acts as the product of the operad $\Arb$ but keeping the information of the depth of internal vertices in the original Scr\"oder tree, and without performing the products $\nu:N.N\rightarrow N$ of the monoid.  Hence, the elements of $\Arb_{L(N_+(\Arb_N))}$ are what we call {\em factored colored trees}. A factored colored tree $T_{N_f}$, is a structure with data:
 \begin{enumerate} 
 \item A rooted tree $T$,
 \item  An admissible coloration $c$ of $T$,
 \item For every color $i$ in the image of $c$, the connected structures in $a_c(i)$ are $N$-enriched trees.
 \end{enumerate}
 
 Denote by $\overline{\nu}$ the extension of the product $\nu:N.N\rightarrow N$ to $L(N_+)$ ,
$$\overline{\nu}:L(N_+)\rightarrow N,$$
\noindent defined recursively as $\overline{\nu}(n)=n$, for $n$ an element of $N\subset L(N_+)$, and for $k\geq 2$,
$$\overline{\nu}(n_1,n_2,\dots,n_k)=\nu(n_1,\overline{\nu}(n_2,n_3,\dots,n_k)).$$
We further extend $\overline{\nu}$ over $\Arb_{L(N_+(\Arb_N))}$ by applying $\overline{\nu}$ over the tuples of $M$-structures that enrich the fibers of the vertices of the factored colored tree $T_{N_f}$. This transformation is denoted by $\widehat{\nu}$. The following diagram commutes
\begin{center}
\begin{equation}\label{diagrama2}
\begin{tikzpicture}
  \node (C) {$\CF_{(\Arb_{N})_{2^+}}$};
  \node (P) [below of=C] {$\Arb_N$};
  \node (Ai) [right of=C] {$\Arb_{L(N_+(\Arb_N))}$};
  \draw[<->] (C) to node {$\phi$} (Ai);
  \draw[->] (C) to node [swap] {$\widehat{\eta}$} (P);
  \draw[<-] (P) to node [swap] {$\widehat{\nu}.$} (Ai);
\end{tikzpicture}
\end{equation} 
\end{center} 

Then, we obtain the antipode formula
\begin{theo} The antipode for the Hopf algebra $\CN_{\Arb_N}$  is given by the formula
\be\label{antipodaA_N} S(t_{\tau(T_N)})=\sum_{\widehat{\nu}(T_{N_f})=T_N}\prod_{i\geq 1}(-1)^{|a_c(i)|}t_{\tau(a_c(i))}.
\eeq
\end{theo}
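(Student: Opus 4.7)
The plan is to apply Theorem~\ref{antipode} to the operad $\Arb_N$ and transport the resulting sum across the bijection $\phi\colon \CF_{(\Arb_N)_{2^+}} \leftrightarrow \Arb_{L(N_+(\Arb_N))}$ that was introduced just before the statement. Applied directly to $\Arb_N$, Theorem~\ref{antipode} gives, for $\tau(T_N)\neq \bullet$,
\[
S(t_{\tau(T_N)}) = \sum_{\substack{\TT\in \CF_{\Arb_N}\\ \widehat{\eta}(\TT)=T_N}} (-1)^{|\Iv(\TT)|}\prod_{v\in\Iv(\TT)} t_{\tau(m_v)},
\]
so the whole task consists in re-expressing this sum in the language of factored colored trees.

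The first step is to verify that diagram~(\ref{diagrama2}) commutes, namely that $\widehat{\eta}=\widehat{\nu}\circ\phi$. Recall that $\phi$ contracts each main spine of $\TT$ and records, at the corresponding fiber of the resulting rooted tree, the ordered tuple of $N_+(\Arb_N)$-decorations met along that spine in order of increasing depth. Iterating the $\Arb_N$-operad product $\eta_\nu$ along the same spine performs these successive insertions of edges at the roots while merging the $N$-decorations by $\nu$ in exactly the same order, and this is by definition what $\overline{\nu}$ computes; an induction on the height of $\TT$ (base case: a Schr\"oder tree with a single internal vertex, where the statement is immediate) promotes this to the general case. Once the diagram commutes, one changes the summation variable from $\TT$ to its image $T_{N_f}=\phi(\TT)\in\Arb_{L(N_+(\Arb_N))}$, and the constraint $\widehat{\eta}(\TT)=T_N$ becomes $\widehat{\nu}(T_{N_f})=T_N$.

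It remains to match the exponent and the monomial. Under $\phi$, each internal vertex $v\in\Iv(\TT)$ of depth $i$ corresponds to exactly one connected $N$-enriched tree of color $i$ in the decomposition of $T_{N_f}$, that is, to one element of the assembly $a_c(i)$; summing over all depths gives $|\Iv(\TT)|=\sum_{i\geq 1}|a_c(i)|$. Similarly, the decoration $m_v\in N_+(\Arb_N)[\pi_v]$ is carried by $\phi$ to the underlying type of that very connected component, so
\[
\prod_{v\in\Iv(\TT)} t_{\tau(m_v)} \;=\; \prod_{i\geq 1} t_{\tau(a_c(i))}.
\]
Substituting both identities into the expression coming from Theorem~\ref{antipode} yields formula~(\ref{antipodaA_N}). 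The main obstacle is the careful verification that $\widehat{\eta}=\widehat{\nu}\circ\phi$: because the monoid $N$ need not be commutative the order in which the products $\nu$ are performed genuinely matters, and one must check that the main spine visits the $N$-decorations in precisely the depth-increasing order that makes the left-iterated product $\overline{\nu}$ agree with the cumulative action of $\eta_\nu$; associativity of $\nu$ is exactly the ingredient that reconciles the two procedures.
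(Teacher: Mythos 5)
Your proposal is correct and follows essentially the same route as the paper: the paper's (very terse) justification is precisely to specialize the general antipode formula of Theorem~\ref{antipode} to $\Arb_N$, transport the sum through the bijection $\phi$, and invoke the commutativity of diagram~(\ref{diagrama2}) so that the constraint $\widehat{\eta}(\TT)=T_N$ becomes $\widehat{\nu}(T_{N_f})=T_N$, with internal vertices of depth $i$ matching the connected components of color $i$. Your write-up merely makes explicit the bookkeeping (the identity $|\Iv(\TT)|=\sum_i|a_c(i)|$, the matching of monomials, and the role of associativity of $\nu$ in reconciling $\overline{\nu}$ with the iterated $\eta_\nu$) that the paper leaves to the reader.
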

The antipode formula for the Hopf algebra of planar trees $\CN_{\Arb_L}$, the case $N=L$, is as in Eq. (\ref{antipodeA}), with the only difference that the admissible colorations have to be also weakly increasing from right to left. For example we have

\begin{equation} S(t_{\arbolitoizq})=-t_{\ldos}^3+t_{\ltres}t_{\ldos}+t_{\ldos}t_{\cdos}-t_{\arbolitoizq},
\end{equation}

\noindent the terms corresponding to the $4$ admissible colorations:
\[
\scalebox{1} 
{
\begin{pspicture}(0,-0.3615)(3.51375,0.3615)
\psdots[dotsize=0.12](0.2653125,0.2785)
\psdots[dotsize=0.12](0.2653125,0.0385)
\psdots[dotsize=0.12](0.4253125,-0.2015)
\psdots[dotsize=0.12](0.5653125,0.0385)
\psline[linewidth=0.046cm](0.2653125,0.2385)(0.2653125,0.0585)
\psline[linewidth=0.046cm](0.2853125,0.0185)(0.4053125,-0.2015)
\psline[linewidth=0.046cm](0.4253125,-0.2215)(0.5653125,0.0385)
\usefont{T1}{ptm}{m}{n}
\rput(0.58953124,-0.2215){\small 1}
\usefont{T1}{ptm}{m}{n}
\rput(0.2196875,-0.2015){\small 2}
\usefont{T1}{ptm}{m}{n}
\rput(0.09296875,0.1485){3}
\psdots[dotsize=0.12](1.1986458,0.2415)
\psdots[dotsize=0.12](1.1986458,0.0014999998)
\psdots[dotsize=0.12](1.3586458,-0.2385)
\psdots[dotsize=0.12](1.4986458,0.0014999998)
\psline[linewidth=0.046cm](1.1986458,0.2015)(1.1986458,0.021499999)
\psline[linewidth=0.046cm](1.2186458,-0.0185)(1.3386458,-0.2385)
\psline[linewidth=0.046cm](1.3586458,-0.2585)(1.4986458,0.0014999998)
\usefont{T1}{ptm}{m}{n}
\rput(1.0372396,0.1115){2}
\usefont{T1}{ptm}{m}{n}
\rput(1.1530209,-0.2215){\small 2}
\usefont{T1}{ptm}{m}{n}
\rput(1.5228646,-0.2215){\small 1}
\psdots[dotsize=0.12](2.1319792,0.2785)
\psdots[dotsize=0.12](2.1319792,0.0385)
\psdots[dotsize=0.12](2.291979,-0.2015)
\psdots[dotsize=0.12](2.4319792,0.0385)
\psline[linewidth=0.046cm](2.1319792,0.2385)(2.1319792,0.0585)
\psline[linewidth=0.046cm](2.1519792,0.0185)(2.271979,-0.2015)
\psline[linewidth=0.046cm](2.291979,-0.2215)(2.4319792,0.0385)
\usefont{T1}{ptm}{m}{n}
\rput(2.456198,-0.2215){\small 1}
\usefont{T1}{ptm}{m}{n}
\rput(2.056198,-0.2015){\small 1}
\usefont{T1}{ptm}{m}{n}
\rput(1.970573,0.1485){2}
\psdots[dotsize=0.12](3.0653124,0.2785)
\psdots[dotsize=0.12](3.0653124,0.0385)
\psdots[dotsize=0.12](3.2253125,-0.2015)
\psdots[dotsize=0.12](3.3653126,0.0385)
\psline[linewidth=0.046cm](3.0653124,0.2385)(3.0653124,0.0585)
\psline[linewidth=0.046cm](3.0853126,0.0185)(3.2053125,-0.2015)
\psline[linewidth=0.046cm](3.2253125,-0.2215)(3.3653126,0.0385)
\usefont{T1}{ptm}{m}{n}
\rput(3.3895311,-0.2215){\small 1}
\usefont{T1}{ptm}{m}{n}
\rput(2.9895313,-0.2015){\small 1}
\usefont{T1}{ptm}{m}{n}
\rput(2.8721876,0.1485){1}
\end{pspicture}
}
\]

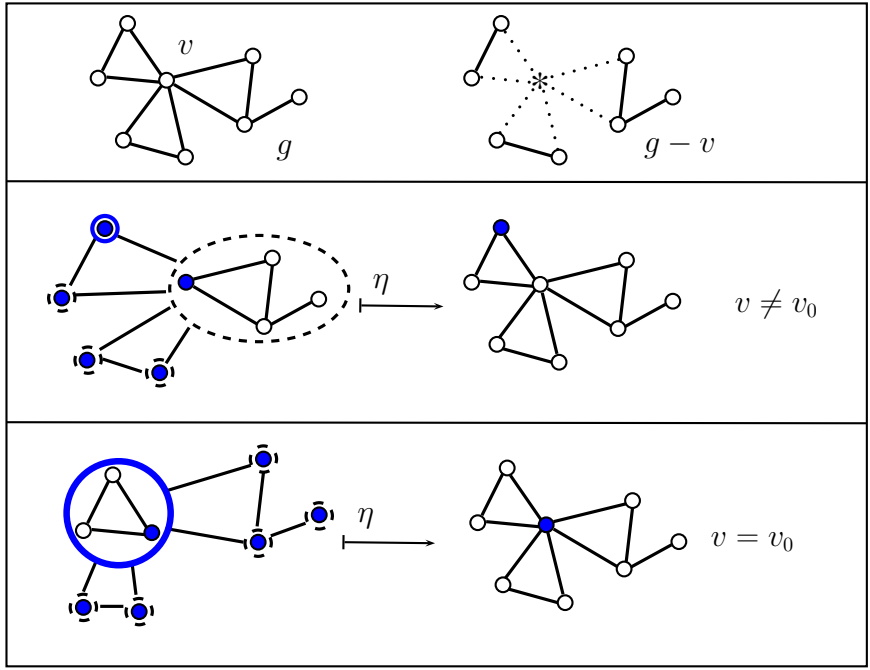
\begin{figure}
\begin{center}
\scalebox{0.7} 
{
\begin{pspicture}(0,-6.32)(16.38,6.32)
\pscircle[linewidth=0.04,dimen=outer,fillstyle=solid](5.58,4.52){0.16}
\pscircle[linewidth=0.04,dimen=outer,fillstyle=solid](2.24,3.7){0.16}
\pscircle[linewidth=0.04,dimen=outer,fillstyle=solid](3.42,3.38){0.16}
\pscircle[linewidth=0.04,dimen=outer,fillstyle=solid](4.7,5.3){0.16}
\pscircle[linewidth=0.04,dimen=outer,fillstyle=solid](1.76,4.88){0.16}
\pscircle[linewidth=0.04,dimen=outer,fillstyle=solid](2.32,5.92){0.16}
\psline[linewidth=0.06cm](2.38,5.78)(2.96,4.94)
\psline[linewidth=0.06cm](1.92,4.9)(2.96,4.8)
\psline[linewidth=0.06cm](2.24,5.82)(1.84,5.02)
\psline[linewidth=0.06cm](4.58,5.24)(3.2,4.9)
\psline[linewidth=0.06cm](3.2,4.76)(4.42,4.04)
\psline[linewidth=0.06cm](4.7,5.16)(4.58,4.14)
\psline[linewidth=0.06cm](4.64,4.02)(5.44,4.46)
\psline[linewidth=0.06cm](2.28,3.82)(3.0,4.78)
\psline[linewidth=0.06cm](3.1,4.72)(3.38,3.52)
\pscircle[linewidth=0.04,dimen=outer,fillstyle=solid](3.06,4.84){0.16}
\pscircle[linewidth=0.04,dimen=outer,fillstyle=solid](4.54,4.0){0.16}
\psline[linewidth=0.06cm](2.34,3.64)(3.28,3.38)
\pscircle[linewidth=0.04,dimen=outer,fillstyle=solid](12.675,4.52){0.16}
\pscircle[linewidth=0.04,dimen=outer,fillstyle=solid](9.335,3.7){0.16}
\pscircle[linewidth=0.04,dimen=outer,fillstyle=solid](10.515,3.38){0.16}
\pscircle[linewidth=0.04,dimen=outer,fillstyle=solid](11.795,5.3){0.16}
\pscircle[linewidth=0.04,dimen=outer,fillstyle=solid](8.855,4.88){0.16}
\pscircle[linewidth=0.04,dimen=outer,fillstyle=solid](9.415,5.92){0.16}
\psline[linewidth=0.06cm,linestyle=dotted,dotsep=0.16cm](9.475,5.78)(10.015,5.02)
\psline[linewidth=0.06cm,linestyle=dotted,dotsep=0.16cm](9.015,4.9)(9.975,4.8)
\psline[linewidth=0.06cm](9.335,5.82)(8.935,5.02)
\psline[linewidth=0.06cm,linestyle=dotted,dotsep=0.16cm](11.675,5.24)(10.355,4.9)
\psline[linewidth=0.06cm,linestyle=dotted,dotsep=0.16cm](10.375,4.7)(11.515,4.04)
\psline[linewidth=0.06cm](11.795,5.16)(11.675,4.14)
\psline[linewidth=0.06cm](11.735,4.02)(12.535,4.46)
\psline[linewidth=0.06cm,linestyle=dotted,dotsep=0.16cm](9.375,3.82)(10.035,4.68)
\psline[linewidth=0.06cm,linestyle=dotted,dotsep=0.16cm](10.215,4.66)(10.495,3.5)
\pscircle[linewidth=0.04,dimen=outer,fillstyle=solid](11.635,4.0){0.16}
\psdots[dotsize=0.32,dotstyle=asterisk](10.155,4.84)
\psline[linewidth=0.06cm](9.455,3.66)(10.375,3.4)
\pscircle[linewidth=0.04,dimen=outer,fillstyle=solid](12.675,0.64){0.16}
\pscircle[linewidth=0.04,dimen=outer,fillstyle=solid](9.335,-0.18){0.16}
\pscircle[linewidth=0.04,dimen=outer,fillstyle=solid](10.515,-0.52){0.16}
\pscircle[linewidth=0.04,dimen=outer,fillstyle=solid](11.795,1.42){0.16}
\pscircle[linewidth=0.04,dimen=outer,fillstyle=solid](8.855,1.0){0.16}
\pscircle[linewidth=0.04,dimen=outer,fillstyle=solid,fillcolor=blue](9.415,2.04){0.16}
\psline[linewidth=0.06cm](9.475,1.9)(10.055,1.06)
\psline[linewidth=0.06cm](9.015,1.02)(10.055,0.92)
\psline[linewidth=0.06cm](9.335,1.94)(8.935,1.14)
\psline[linewidth=0.06cm](11.675,1.36)(10.295,1.02)
\psline[linewidth=0.06cm](10.295,0.88)(11.515,0.16)
\psline[linewidth=0.06cm](11.795,1.28)(11.675,0.26)
\psline[linewidth=0.06cm](11.735,0.14)(12.535,0.58)
\psline[linewidth=0.06cm](9.375,-0.06)(10.095,0.9)
\psline[linewidth=0.06cm](10.195,0.8)(10.475,-0.36)
\pscircle[linewidth=0.04,dimen=outer,fillstyle=solid](10.155,0.96){0.16}
\pscircle[linewidth=0.04,dimen=outer,fillstyle=solid](11.635,0.12){0.16}
\psline[linewidth=0.06cm](9.435,-0.26)(10.375,-0.52)
\pscircle[linewidth=0.04,dimen=outer,fillstyle=solid](5.95,0.68){0.16}
\pscircle[linewidth=0.04,dimen=outer,fillstyle=solid](5.07,1.46){0.16}
\psline[linewidth=0.06cm](4.95,1.4)(3.57,1.06)
\psline[linewidth=0.06cm](3.57,0.92)(4.79,0.2)
\psline[linewidth=0.06cm](5.07,1.32)(4.95,0.3)
\pscircle[linewidth=0.04,dimen=outer,fillstyle=solid,fillcolor=blue](3.43,1.0){0.16}
\psellipse[linewidth=0.06,linestyle=dashed,dash=0.16cm 0.16cm,dimen=outer](4.82,0.88)(1.73,1.04)
\psline[linewidth=0.06cm](5.01,0.18)(5.81,0.62)
\pscircle[linewidth=0.04,dimen=outer,fillstyle=solid](4.91,0.16){0.16}
\pscircle[linewidth=0.04,dimen=outer,fillstyle=solid,fillcolor=blue](1.55,-0.48){0.16}
\pscircle[linewidth=0.04,dimen=outer,fillstyle=solid,fillcolor=blue](2.93,-0.72){0.16}
\pscircle[linewidth=0.06,linestyle=dashed,dash=0.16cm 0.16cm,dimen=outer](2.93,-0.72){0.28}
\pscircle[linewidth=0.04,dimen=outer,fillstyle=solid,fillcolor=blue](1.07,0.7){0.16}
\pscircle[linewidth=0.04,dimen=outer,fillstyle=solid,fillcolor=blue](1.89,2.02){0.16}
\psline[linewidth=0.06cm](2.13,1.88)(3.25,1.4)
\psline[linewidth=0.06cm](1.33,0.76)(3.05,0.82)
\psline[linewidth=0.06cm](1.71,1.84)(1.23,0.94)
\psline[linewidth=0.06cm](1.79,-0.28)(3.13,0.52)
\psline[linewidth=0.06cm](3.49,0.14)(3.05,-0.54)
\psline[linewidth=0.06cm](1.85,-0.46)(2.59,-0.7)
\pscircle[linewidth=0.08,linecolor=blue,dimen=outer](1.89,2.02){0.28}
\pscircle[linewidth=0.06,linestyle=dashed,dash=0.16cm 0.16cm,dimen=outer](1.57,-0.48){0.28}
\pscircle[linewidth=0.06,linestyle=dashed,dash=0.16cm 0.16cm,dimen=outer](1.07,0.7){0.28}
\pscircle[linewidth=0.04,dimen=outer,fillstyle=solid,fillcolor=blue](5.96,-3.42){0.16}
\pscircle[linewidth=0.04,dimen=outer,fillstyle=solid,fillcolor=blue](1.48,-5.18){0.16}
\pscircle[linewidth=0.04,dimen=outer,fillstyle=solid,fillcolor=blue](2.54,-5.26){0.16}
\pscircle[linewidth=0.04,dimen=outer,fillstyle=solid,fillcolor=blue](4.9,-2.36){0.16}
\pscircle[linewidth=0.04,dimen=outer,fillstyle=solid](1.48,-3.72){0.16}
\pscircle[linewidth=0.04,dimen=outer,fillstyle=solid](2.04,-2.66){0.16}
\psline[linewidth=0.06cm](2.12,-2.78)(2.68,-3.66)
\psline[linewidth=0.06cm](1.64,-3.7)(2.68,-3.8)
\psline[linewidth=0.06cm](1.96,-2.78)(1.56,-3.58)
\psline[linewidth=0.06cm](4.66,-2.48)(3.0,-2.98)
\psline[linewidth=0.06cm](3.04,-3.68)(4.52,-3.94)
\psline[linewidth=0.06cm](4.9,-2.62)(4.78,-3.64)
\psline[linewidth=0.06cm](5.04,-3.82)(5.7,-3.58)
\psline[linewidth=0.06cm](1.48,-4.88)(1.74,-4.26)
\psline[linewidth=0.06cm](2.4,-4.3)(2.48,-4.94)
\pscircle[linewidth=0.04,dimen=outer,fillstyle=solid,fillcolor=blue](2.78,-3.76){0.16}
\pscircle[linewidth=0.04,dimen=outer,fillstyle=solid,fillcolor=blue](4.8,-3.94){0.16}
\psline[linewidth=0.06cm](1.8,-5.16)(2.24,-5.16)
\pscircle[linewidth=0.06,linestyle=dashed,dash=0.16cm 0.16cm,dimen=outer](4.9,-2.36){0.28}
\pscircle[linewidth=0.124,linecolor=blue,dimen=outer](2.15,-3.39){1.05}
\pscircle[linewidth=0.06,linestyle=dashed,dash=0.16cm 0.16cm,dimen=outer](4.8,-3.96){0.28}
\pscircle[linewidth=0.06,linestyle=dashed,dash=0.16cm 0.16cm,dimen=outer](5.96,-3.44){0.28}
\pscircle[linewidth=0.06,linestyle=dashed,dash=0.16cm 0.16cm,dimen=outer](2.54,-5.26){0.28}
\pscircle[linewidth=0.06,linestyle=dashed,dash=0.16cm 0.16cm,dimen=outer](1.48,-5.18){0.28}
\pscircle[linewidth=0.04,dimen=outer,fillstyle=solid](12.79,-3.93){0.16}
\pscircle[linewidth=0.04,dimen=outer,fillstyle=solid](9.45,-4.75){0.16}
\pscircle[linewidth=0.04,dimen=outer,fillstyle=solid](10.63,-5.09){0.16}
\pscircle[linewidth=0.04,dimen=outer,fillstyle=solid](11.91,-3.15){0.16}
\pscircle[linewidth=0.04,dimen=outer,fillstyle=solid](8.97,-3.57){0.16}
\pscircle[linewidth=0.04,dimen=outer,fillstyle=solid](9.53,-2.53){0.16}
\psline[linewidth=0.06cm](9.59,-2.67)(10.17,-3.51)
\psline[linewidth=0.06cm](9.13,-3.55)(10.17,-3.65)
\psline[linewidth=0.06cm](9.45,-2.63)(9.05,-3.43)
\psline[linewidth=0.06cm](11.79,-3.21)(10.41,-3.55)
\psline[linewidth=0.06cm](10.41,-3.69)(11.63,-4.41)
\psline[linewidth=0.06cm](11.91,-3.29)(11.79,-4.31)
\psline[linewidth=0.06cm](11.85,-4.43)(12.65,-3.99)
\psline[linewidth=0.06cm](9.49,-4.63)(10.21,-3.67)
\psline[linewidth=0.06cm](10.29,-3.71)(10.59,-4.93)
\pscircle[linewidth=0.04,dimen=outer,fillstyle=solid,fillcolor=blue](10.27,-3.61){0.16}
\pscircle[linewidth=0.04,dimen=outer,fillstyle=solid](11.75,-4.45){0.16}
\psline[linewidth=0.06cm](9.55,-4.83)(10.49,-5.09)
\psline[linewidth=0.04cm,tbarsize=0.07055555cm 5.0,arrowsize=0.05291667cm 2.0,arrowlength=1.4,arrowinset=0.4]{|->}(6.4,-3.94)(8.16,-3.96)
\psline[linewidth=0.04cm,tbarsize=0.07055555cm 5.0,arrowsize=0.05291667cm 2.0,arrowlength=1.4,arrowinset=0.4]{|->}(6.72,0.56)(8.32,0.54)
\fontsize{16pt}{2pt}
\usefont{T1}{ptm}{m}{n}
\rput(7.137344,0.955){$\eta$}
\usefont{T1}{ptm}{m}{n}
\rput(6.8414063,-3.49){$\eta$}
\usefont{T1}{ptm}{m}{n}
\rput(14.641406,0.59){$v\neq v_0$}
\usefont{T1}{ptm}{m}{n}
\rput(14.191406,-3.93){$v= v_0$}
\psframe[linewidth=0.04,dimen=outer](16.38,6.32)(0.0,-6.32)
\psline[linewidth=0.04cm](0.02,2.92)(16.36,2.9)
\psline[linewidth=0.04cm](0.02,-1.66)(16.36,-1.68)
\usefont{T1}{ptm}{m}{n}
\rput(3.4314063,5.47){$v$}
\usefont{T1}{ptm}{m}{n}
\rput(12.811406,3.59){$g-v$}
\usefont{T1}{ptm}{m}{n}
\rput(5.3014064,3.51){$g$}
\end{pspicture}
}
\caption{A graph with a cutpoint $v$ and factorizations according with the selection of the distinguished vertex, $v\neq v_0$, and $v=v_0$.}\label{noprimos} 
\end{center}
\end{figure}
\subsection {The Hopf algebra \texorpdfstring{$\CN_{\Grp}$}{NGrp}.}
Recall that for a connected graph $g$, a vertex $v$ is called a {\em cutpoint} if the graph $g-v$, obtained by deleting $v$, is disconnected. A {\em nonseparable} (or {\em biconnected}) graph is a connected graph that does not have cutpoints. 

The prime  structures of the operad $\Grp$ are the pointed graphs $(g,v)$  such that $g$ is nonseparable. We prove that in the following proposition.
\begin{prop} A pointed graph $(g,v_0)\in \Grp[U]$ is a prime structure of $\Grp$ if and only if $g$ is nonseparable. 
\end{prop}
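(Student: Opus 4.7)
I would prove the two implications of the biconditional separately. For the easy direction (nonseparable implies prime), the plan is to assume $g$ is nonseparable and take any factorization $(g,v_0)=\eta((g_B,v_B)_{B\in\pi},(g'_\pi,B_0))$. If this factorization were nontrivial -- meaning $|\pi|\geq 2$ and some block $B$ has size $\geq 2$ -- then by the definition of $\eta$ in $\Grp$, an edge of $g$ crossing blocks arises only as an external edge joining distinguished vertices. Hence every vertex in $B\setminus\{v_B\}$ has all its $g$-neighbors inside $B$, so removing $v_B$ disconnects $B\setminus\{v_B\}$ from $U\setminus B$ in $g$. This would make $v_B$ a cutpoint, contradicting nonseparability; thus only the two trivial factorizations exist and $(g,v_0)$ is prime.

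For the converse direction, I would assume $g$ has a cutpoint and construct a nontrivial factorization, splitting into two cases according to whether $v_0$ is itself a cutpoint. If $v_0$ is a cutpoint, $g-v_0$ has at least two connected components $C_1,C_2,\ldots,C_k$, and I would take the partition $\pi=\{C_1\cup\{v_0\}\}\cup\{\{x\}\mid x\in U\setminus(C_1\cup\{v_0\})\}$ with the big block distinguished by $v_0$. I would then verify that this yields a valid nontrivial factorization: the induced subgraph on $C_1\cup\{v_0\}$ is connected, and every cross-edge either has $v_0$ as an endpoint (matching the distinguished vertex of the big block) or joins two singletons from a common $C_i$ with $i\geq 2$ (automatically distinguished, and no edges exist between distinct $C_i$'s).

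If $v_0$ is not a cutpoint, then $v_0$ lies in a single block of the biconnected decomposition, and since the block-cut tree of a graph with a cutpoint has at least two leaves -- all of which are biconnected blocks -- one can pick a leaf block $B$ with $v_0\notin V(B)$. The proposed factorization uses $\pi=\{V(B)\}\cup\{\{x\}\mid x\in U\setminus V(B)\}$ with $V(B)$ distinguished by its unique cutpoint $c_B$. The key observation to verify is that every vertex of $V(B)\setminus\{c_B\}$ has all its $g$-neighbors inside $V(B)$ (otherwise it would itself be a cutpoint lying in a leaf block's interior), so every cross-edge out of $V(B)$ passes through $c_B$, and $\{v_0\}$ serves as $B_0$ with $v_{B_0}=v_0$.

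The principal obstacle throughout is the distinguished-vertex compatibility of cross-edges: a naive partition like $\{\{c\},C_1,\ldots,C_k\}$ at a cutpoint $c$ fails whenever $c$ has two or more neighbors inside some $C_i$, because a single distinguished vertex of $C_i$ cannot simultaneously match all those external edges. Both constructions above sidestep this issue -- by absorbing $C_1$ together with $v_0$ so that the troublesome edges become block-internal (first case), or by using singletons outside a leaf block so that all remaining cross-edges run between singleton blocks whose distinguished vertices are forced (second case).
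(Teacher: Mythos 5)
Your proof is correct and follows essentially the same route as the paper: the forward direction is the identical observation that the distinguished vertex of any non-singleton factor in a nontrivial factorization must be a cutpoint, and the converse uses the same device of a partition with one big block (through whose distinguished vertex all cross-edges pass) together with singletons, with $B_0$ chosen according to whether $v_0$ lies in the big block. The only real difference is in how the big block is selected when $v_0$ is not the cutpoint: you invoke the block-cut tree to pick a leaf block avoiding $v_0$, whereas the paper more directly takes an arbitrary cutpoint $v$ and the subgraph induced by $\{v\}$ together with one component of $g-v$ not containing $v_0$, which achieves the same effect without that extra machinery.
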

\begin{proof}
Assuming that $(g,v_0)$ is not prime, let $(a,(g'_{\pi},B_0))\in\Grp[U]$ be a non trivial factorization of $(g,v_0)$. Then, by the definition of $\eta$,  every distinguished element $v$ of a non singleton graph $(g_B,v)$ in $a$ is a cutpoint of $g$, because if deleted, it disconnects from $g$ the rest of the vertices of $g_B$.
Conversely, we are going to prove that if $g$ contains a cutpoint, then there exists a non trivial factorization of $(g,v_0)$. Let $v$ a cutpoint of $g$. Then $g-v$ splits in some $k\geq 2$ conected components $g_1,g_2,\dots,g_k$. If $v\neq v_0$, chose one of them such that $v_0$ is not there. We choose $g_1$ assuming without loss of generality that  $v_0$ is not in $g_1$. Consider the graph $g_1+v$ generated in $g$ by $g_1$ and $v$. Denote by $U_1$ the set of vertices of $g_1+v$. Let $\pi$ be the partition formed by singletons in $U_2=U-U_1$ and only one `big block' $U_1$, $\pi=\{U_1\}\cup \{\{u\}\mid u\in U_2\}$. Let $a$ be the assembly with subjacent partition $\pi$ and whose structures are the singleton graphs on the elements of $U_2$ and the pointed graph $(g_1+v,v)$ on $U_1$. Note that $v_0$ is in $U_2$. Define $g'_{\pi}$ to be the graph obtained by contracting to a point the subgraph $g_1+v$ in $g$. Let $B_0=\{v_0\}$ be the block of singletons containing $v_0$. $\eta(a,(g'_\pi,B_0))$ is clearly equal to $(g,v_0)$. When $v=v_0$ the construction is similar, except that $B_0$ is chosen to be $U_1$ (see Figure (\ref{noprimos})).    
\end{proof}

Recall that a {\em biconnected component} of a graph is a maximal biconnected subgraph. Two different biconnected components have at most one vertex in common (a cutpoint), and they partition the edges of the graph. 
We denote by $\Bc$ the species of biconnected graphs.
The species of pointed graphs satisfies the identity 

\begin{equation}
\Grp=XE(\Bc'(\Grp)).
\end{equation}
The reader can check details in \cite [sect. 4.2.]{B-L-L}.
Hence, $\Grp$ is isomorphic to the species $\Arb_{E(\mathscr{B}'_c)}$ of trees enriched with assemblies of $\mathscr{B}'_c$-structures. Since $E(\mathscr{B}'_c)$ is a monoid, $\Arb_{E(\mathscr{B}'_c)}$ is an operad. 
The reader may also check that  $\Grp$ and $\Arb_{E(\mathscr{B}'_c)}$ are also isomorphic as operads. Then, we can deduce the antipode formula for $\CN_{\Grp}$ from the general one for $\CN_{\Arb_N}$(\ref{antipodaA_N}).

\begin{defi}Let $(g,v_0)$ be a pointed graph and let $c:\mathscr{E}(g)\rightarrow \PP$ be a coloration of the edges of $g$ with colors in the positive integers $\PP$ such that edges in the same biconnected component have the same color. Denote by $a(i)=\{g_{B}^{(i)}\}_B$ the assembly of subgraphs of $g$ induced by the edges colored $i$.
 We say that  $c$ is an {\em admissible coloring} relative to $(g,v_0)$ if it satisfies the following conditions
\begin{enumerate}
\item There is at least one edge colored $1$.
\item $c$ is weakly increasing in any sequence $B_0, B_1,\dots,B_n$ of biconnected components such that $B_{i-1}$ and $B_i$ share a cutpoint $i=1,\dots,n$, and  $v_0\in B_0$.
\item For $i\ge 2$ in the image of $c$, in every connected component $g_B$ of $a(i)$ there is always one vertex $v_B$ which is also in a biconnected component of color $i-1$. 
\end{enumerate}
\end{defi}

The vertex $v_B$ in the third condition is necessarily a cutpoint, since it is in two different biconnected components. Observe that as a consequence of the definition of admissibility, $a(1)$ is connected graph that contains $v_0$.
Denote by $a^{\bullet}(i)$ the assembly of pointed graphs obtained by choosing as distinguished element the cutpoint $v_B$ on each graph $g_B^{(i)}$,   $a^{\bullet}(i)=\{(g_{B}^{(i)},v_{B})\}_B$, $i\geq 2$. For the graph $a(1)=g_1$, choose $v_0$ to be the distinguish element, $a(1)=(g_1,v_0)$.
By condition $3$, the image of an admissible coloration $c$ is a segment $[k]=\{1,2,\dots,k\}$ of $\PP$.

 Now, we give our combinatorial recipe for the antipode of $\CN_{\Grp}$.

\begin{theo}Let $(g,v_0)$ be a pointed graph. The antipode $S_{\Grp}(g,v_o)$ is given by:
$$ S_{\Grp}(g,v_0)=\sum_{c\in\admc(g,v_0)}\prod_{i=1}^k(-1)^{|a^{\bullet}(i)|}t_{\tau(a^{\bullet}(i))}.$$
\noindent where $\admc(g,v_0)$ is the set of admissible colorations on $(g,v_0)$.
\end{theo}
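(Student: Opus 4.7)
My plan is to derive this formula as an instance of the general antipode formula (\ref{antipodaA_N}) for $\CN_{\Arb_N}$, exploiting the operad isomorphism $\Grp \cong \Arb_{E(\Bc')}$ recalled just before the statement. I would first fix $N = E(\Bc')$, which is a monoid under the concatenation (union) of assemblies of derived biconnected graphs. Under the operad isomorphism, a pointed graph $(g,v_0)$ is identified with its block-cutpoint tree rooted at $v_0$, whose fiber at each vertex $v$ records the biconnected components of $g$ that contain $v$ and lie on the leaf-side of $v$ in the rooted tree. In this guise, formula (\ref{antipodaA_N}) reads
$$ S(t_{\tau(g,v_0)}) \;=\; \sum_{\widehat{\nu}(T_{N_f})=(g,v_0)} \;\prod_{i\ge 1}(-1)^{|a_c(i)|}\, t_{\tau(a_c(i))}, $$
so what remains is to bijectively match the index set with $\admc(g,v_0)$ and check that the monomial contributions coincide.

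The key bijection sends a factored colored tree $T_{N_f}$ to the coloration it induces on the biconnected components of $g$. Since $\widehat{\nu}$ in the monoid $E(\Bc')$ is merely the union of assemblies, the only data erased by $\widehat{\nu}$ is, for each fiber, the partition of its biconnected components according to the depth-color indexing the tuple; this is precisely the data of an edge-coloring of $g$ that is constant on each biconnected component. Conversely, an admissible coloring $c$ on $(g,v_0)$ determines the color of every component and hence of every tuple entry in every fiber, reconstructing $T_{N_f}$ uniquely.

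The main obstacle is to verify that the three conditions of Definition \ref{admisibleNAP} on the block-cutpoint tree correspond exactly to conditions (1)--(3) defining $\admc(g,v_0)$. Condition (1) is immediate. Condition (2) — weakly increasing from root to leaves — becomes condition (2) on $g$ because the paths from $v_0$ in the block-cutpoint tree are exactly the sequences $B_0,B_1,\dots,B_n$ of biconnected components with $v_0\in B_0$ in which consecutive components share a cutpoint. Condition (3) — that the root of each color-$i$ subtree, $i\ge 2$, has an incident edge of color $i-1$ — becomes the requirement that every connected component of $a(i)$ shares a (necessarily cut-) vertex with some color-$(i-1)$ biconnected component; the shared vertex is exactly the distinguished vertex $v_B$ in $a^{\bullet}(i)$. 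For the color-$1$ subtree rooted at $v_0$, the same mechanism identifies $v_0$ as the distinguished vertex of $a^{\bullet}(1)$.

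Finally, I would check that under the operad isomorphism the assembly $a_c(i)$ of color-$i$ enriched subtrees of $T_{N_f}$ corresponds, component by component, to the pointed subgraph assembly $a^{\bullet}(i)$: its underlying graph is built out of the color-$i$ biconnected components, and its distinguished vertex is the cutpoint singled out above. Hence the signs agree, $(-1)^{|a_c(i)|}=(-1)^{|a^{\bullet}(i)|}$, and the monomial types agree, $t_{\tau(a_c(i))} = t_{\tau(a^{\bullet}(i))}$, yielding the stated formula term by term.
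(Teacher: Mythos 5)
Your proposal is correct and follows exactly the route the paper intends: the paper derives this theorem by invoking the operad isomorphism $\Grp\cong\Arb_{E(\Bc')}$ and specializing the general formula (\ref{antipodaA_N}) for $\CN_{\Arb_N}$, which is precisely your argument. You in fact supply more detail than the paper (which leaves the translation of factored colored trees into admissible colorations, and the matching of the three admissibility conditions, to the reader), and those details check out.
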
 

\begin{figure}
\begin{center}
\scalebox{0.7} 
{
\begin{pspicture}(0,-6.41)(15.26,6.41)
\pscircle[linewidth=0.04,dimen=outer](1.1726563,5.101719){0.33}
\usefont{T1}{ptm}{m}{n}
\rput(1.1473438,5.087656){\Large $1$}
\pscircle[linewidth=0.04,dimen=outer](2.8526564,5.641719){0.33}
\usefont{T1}{ptm}{m}{n}
\rput(2.8273437,5.6276565){\Large $3$}
\pscircle[linewidth=0.04,dimen=outer](1.9526563,5.641719){0.33}
\usefont{T1}{ptm}{m}{n}
\rput(1.9273437,5.6276565){\Large $2$}
\pscircle[linewidth=0.04,dimen=outer](3.5726562,4.681719){0.33}
\usefont{T1}{ptm}{m}{n}
\rput(3.5473437,4.6676564){\Large $4$}
\pscircle[linewidth=0.046,dimen=outer](8.372656,3.6817188){0.33}
\usefont{T1}{ptm}{m}{n}
\rput(8.347343,3.6676562){\Large $9$}
\pscircle[linewidth=0.04,dimen=outer](7.3726563,5.081719){0.33}
\usefont{T1}{ptm}{m}{n}
\rput(7.347344,5.067656){\Large $7$}
\pscircle[linewidth=0.04,dimen=outer](4.152656,4.081719){0.33}
\usefont{T1}{ptm}{m}{n}
\rput(4.1273437,4.067656){\Large $5$}
\pscircle[linewidth=0.04,dimen=outer](8.132656,4.521719){0.33}
\usefont{T1}{ptm}{m}{n}
\rput(8.107344,4.507656){\Large $8$}
\pscircle[linewidth=0.04,dimen=outer](5.8926563,4.201719){0.33}
\usefont{T1}{ptm}{m}{n}
\rput(5.867344,4.1876564){\Large $6$}
\psline[linewidth=0.04cm](1.34,4.871719)(2.2,4.271719)
\psline[linewidth=0.04cm](2.0,5.351719)(2.22,4.271719)
\psline[linewidth=0.04cm](2.78,5.351719)(2.26,4.271719)
\psdots[dotsize=0.16](2.24,4.2117186)
\psdots[dotsize=0.16](2.8,3.7117188)
\psline[linewidth=0.04cm](3.38,4.431719)(2.82,3.7317188)
\psline[linewidth=0.04cm](3.94,3.8517187)(2.84,3.7117188)
\psline[linewidth=0.04cm](2.24,4.2117186)(2.8,3.7117188)
\psdots[dotsize=0.16](4.86,2.2317188)
\psdots[dotsize=0.16](5.98,2.9317188)
\psdots[dotsize=0.16](6.98,3.6917188)
\psline[linewidth=0.04cm](4.86,2.2717187)(5.96,2.9117188)
\psline[linewidth=0.04cm](5.96,2.9317188)(6.94,3.6717188)
\psline[linewidth=0.04cm](5.98,2.9517188)(5.9,3.9117188)
\psline[linewidth=0.04cm](6.98,3.7117188)(7.26,4.791719)
\psline[linewidth=0.04cm](6.98,3.7117188)(7.88,4.351719)
\psline[linewidth=0.04cm](7.0,3.6917188)(8.08,3.6717188)
\psline[linewidth=0.04cm](2.78,3.7117188)(4.82,2.2517188)
\usefont{T1}{ppl}{m}{n}
\rput(5.158125,2.0617187){\large 1}
\usefont{T1}{ppl}{m}{n}
\rput(2.5190625,3.4817188){\large 2}
\usefont{T1}{ppl}{m}{n}
\rput(6.2790623,2.8017187){\large 2}
\usefont{T1}{ppl}{m}{n}
\rput(7.117656,3.3217187){\large 3}
\usefont{T1}{ppl}{m}{n}
\rput(1.9576563,3.9817188){\large 3}
\psdots[dotsize=0.2,linecolor=blue](1.64,4.351719)
\psdots[dotsize=0.2,linecolor=blue](2.42,3.7917187)
\psdots[dotsize=0.2,linecolor=blue](7.72,3.4917188)
\psdots[dotsize=0.2,linecolor=blue](6.56,3.1717188)
\psdots[dotsize=0.2,linecolor=blue](5.56,2.4517188)
\psarc[linewidth=0.04](2.83,2.7617188){1.29}{47.663002}{141.70984}
\psarc[linewidth=0.04](2.31,3.5417187){1.29}{70.51387}{150.52411}
\psarc[linewidth=0.04](6.6,3.4117188){1.3}{-12.60016}{71.56505}
\psarc[linewidth=0.04](4.95,1.6217188){1.29}{47.663002}{141.48308}
\psarc[linewidth=0.04](5.72,2.3717186){1.3}{23.962488}{92.7927}
\pscircle[linewidth=0.04,dimen=outer,fillstyle=solid,fillcolor=blue](0.64,3.8317187){0.14}
\pscircle[linewidth=0.04,dimen=outer](0.94,4.291719){0.14}
\pscircle[linewidth=0.04,dimen=outer](1.26,3.8317187){0.14}
\psline[linewidth=0.04cm](0.88,4.1917186)(0.68,3.9317188)
\psline[linewidth=0.04cm](1.02,4.2117186)(1.24,3.9317188)
\psline[linewidth=0.04cm](0.74,3.8317187)(1.14,3.8317187)
\pscircle[linewidth=0.04,dimen=outer,fillstyle=solid](7.6,2.5117188){0.14}
\pscircle[linewidth=0.04,dimen=outer](7.9,2.9717188){0.14}
\pscircle[linewidth=0.04,dimen=outer,fillstyle=solid,fillcolor=blue](8.22,2.5117188){0.14}
\psline[linewidth=0.04cm](7.84,2.8717186)(7.64,2.6117187)
\psline[linewidth=0.04cm](7.98,2.8917189)(8.2,2.6117187)
\psline[linewidth=0.04cm](7.7,2.5117188)(8.1,2.5117188)
\pscircle[linewidth=0.04,dimen=outer,fillstyle=solid,fillcolor=blue](1.06,3.2617188){0.14}
\pscircle[linewidth=0.04,dimen=outer](1.94,3.2617188){0.14}
\pscircle[linewidth=0.04,dimen=outer](1.5,3.2617188){0.14}
\psline[linewidth=0.04cm](1.62,3.2717187)(1.82,3.2717187)
\psline[linewidth=0.04cm](1.16,3.2517188)(1.38,3.2517188)
\pscircle[linewidth=0.04,dimen=outer,fillstyle=solid,fillcolor=blue](4.12,2.2217188){0.14}
\pscircle[linewidth=0.04,dimen=outer](3.66,2.2417188){0.14}
\psline[linewidth=0.04cm](3.76,2.2117188)(3.98,2.2117188)
\pscircle[linewidth=0.04,dimen=outer,fillstyle=solid](6.76,2.7217188){0.14}
\pscircle[linewidth=0.04,dimen=outer,fillstyle=solid,fillcolor=blue](7.2,2.7217188){0.14}
\psline[linewidth=0.04cm](6.86,2.7117188)(7.08,2.7117188)
\rput{8.787891}(0.71680516,-1.6753714){\pscircle[linewidth=0.04,dimen=outer](11.26016,3.8266149){0.33}}
\usefont{T1}{ptm}{m}{n}
\rput{8.787891}(0.71680516,-1.6753715){\rput(11.234848,3.8125522){\Large $1$}}
\rput{8.787891}(0.983463,-1.7388158){\pscircle[linewidth=0.04,dimen=outer](11.806326,5.5300536){0.33}}
\usefont{T1}{ptm}{m}{n}
\rput{8.787891}(0.983463,-1.7388158){\rput(11.781013,5.5159907){\Large $3$}}
\rput{8.787891}(0.9391137,-1.5225844){\pscircle[linewidth=0.04,dimen=outer](10.377119,5.349585){0.33}}
\usefont{T1}{ptm}{m}{n}
\rput{8.787891}(0.9391137,-1.5225844){\rput(10.351807,5.3355227){\Large $2$}}
\rput{8.787891}(0.8915544,-1.8705038){\pscircle[linewidth=0.04,dimen=outer](12.617274,4.866154){0.33}}
\usefont{T1}{ptm}{m}{n}
\rput{8.787891}(0.8915544,-1.8705038){\rput(12.591962,4.8520913){\Large $4$}}
\rput{8.787891}(1.0330262,-2.0041854){\pscircle[linewidth=0.04,dimen=outer](13.557886,5.71988){0.33}}
\usefont{T1}{ptm}{m}{n}
\rput{8.787891}(1.0330262,-2.0041854){\rput(13.532574,5.7058177){\Large $5$}}
\psline[linewidth=0.04cm](11.048281,4.067067)(10.507743,5.0763335)
\psline[linewidth=0.04cm](11.341703,4.1326656)(11.762117,5.209537)
\psline[linewidth=0.04cm](11.553202,3.942748)(12.416377,4.64284)
\psline[linewidth=0.04cm](12.881384,5.0385275)(13.379809,5.4798565)
\psline[linewidth=0.04cm](10.696036,5.4292436)(11.492756,5.5119348)
\usefont{T1}{ppl}{m}{n}
\rput{8.787891}(0.7762346,-1.8083756){\rput(12.148154,4.147137){\large 2}}
\usefont{T1}{ppl}{m}{n}
\rput{8.787891}(0.9269799,-1.9682126){\rput(13.263598,5.048128){\large 2}}
\usefont{T1}{ppl}{m}{n}
\rput{8.787891}(1.0128564,-1.623786){\rput(11.064703,5.7808647){\large 3}}
\usefont{T1}{ppl}{m}{n}
\rput{8.787891}(0.81762135,-1.5562034){\rput(10.527321,4.544248){\large 3}}
\usefont{T1}{ppl}{m}{n}
\rput{8.787891}(0.8589081,-1.6787276){\rput(11.345238,4.7516418){\large 3}}
\pscircle[linewidth=0.102,linecolor=blue,dimen=outer](12.432656,2.2828126){0.33}
\usefont{T1}{ptm}{m}{n}
\rput(12.407344,2.26875){\Large $9$}
\rput{-1.1497928}(-0.060150906,0.2804228){\pscircle[linewidth=0.04,dimen=outer](13.943315,3.1375144){0.33}}
\usefont{T1}{ptm}{m}{n}
\rput{-1.1497928}(-0.060150906,0.2804228){\rput(13.918002,3.123452){\Large $7$}}
\psline[linewidth=0.04cm](12.694653,2.3524125)(13.706893,2.9522212)
\psline[linewidth=0.04cm](12.7,2.15)(13.971471,2.083273)
\usefont{T1}{ppl}{m}{n}
\rput{-1.1497928}(-0.035556346,0.2690982){\rput(13.383497,1.9083432){\large 3}}
\usefont{T1}{ppl}{m}{n}
\rput{-1.1497928}(-0.055358376,0.2648761){\rput(13.163211,2.8929617){\large 3}}
\pscircle[linewidth=0.04,dimen=outer](12.792656,3.5028124){0.33}
\usefont{T1}{ptm}{m}{n}
\rput(12.7673435,3.48875){\Large $6$}
\psline[linewidth=0.04cm](12.56,2.59)(12.74,3.21)
\psline[linewidth=0.04cm](12.2,2.51)(11.36,3.55)
\usefont{T1}{ppl}{m}{n}
\rput(12.439062,3.0028124){\large 2}
\psline[linewidth=0.074cm,arrowsize=0.05291667cm 2.0,arrowlength=1.4,arrowinset=0.4]{<->}(8.92,3.67)(10.54,3.67)
\rput{8.787891}(-0.3490522,-1.0841572){\pscircle[linewidth=0.04,dimen=outer](6.88016,-2.8133852){0.33}}
\usefont{T1}{ptm}{m}{n}
\rput{8.787891}(-0.3490522,-1.0841572){\rput(6.8548474,-2.8274477){\Large $1$}}
\rput{8.787891}(-0.08919983,-1.0190341){\pscircle[linewidth=0.04,dimen=outer](6.5863256,-1.0899465){0.33}}
\usefont{T1}{ptm}{m}{n}
\rput{8.787891}(-0.089199826,-1.0190341){\rput(6.5610127,-1.104009){\Large $3$}}
\rput{8.787891}(-0.16011319,-0.8570949){\pscircle[linewidth=0.04,dimen=outer](5.497119,-1.470415){0.33}}
\usefont{T1}{ptm}{m}{n}
\rput{8.787891}(-0.16011319,-0.8570949){\rput(5.4718065,-1.4844775){\Large $2$}}
\rput{8.787891}(-0.087802045,-1.2050241){\pscircle[linewidth=0.04,dimen=outer](7.797274,-1.173846){0.33}}
\usefont{T1}{ptm}{m}{n}
\rput{8.787891}(-0.087802045,-1.2050241){\rput(7.7719617,-1.1879085){\Large $4$}}
\rput{8.787891}(-0.13976502,-1.3013184){\pscircle[linewidth=0.04,dimen=outer](8.397886,-1.5601199){0.33}}
\usefont{T1}{ptm}{m}{n}
\rput{8.787891}(-0.13976502,-1.3013184){\rput(8.372574,-1.5741824){\Large $5$}}
\psline[linewidth=0.04cm](6.668281,-2.5729327)(5.72,-1.71)
\psline[linewidth=0.04cm](6.8817034,-2.4873345)(6.7,-1.39)
\psline[linewidth=0.04cm](7.0932016,-2.597252)(7.7,-1.43)
\psline[linewidth=0.04cm](7.1813836,-2.7014725)(8.16,-1.73)
\usefont{T1}{ppl}{m}{n}
\rput{-0.20341204}(0.0070531643,0.027023572){\rput(7.6081543,-1.972863){\large 2}}
\usefont{T1}{ppl}{m}{n}
\rput{-0.35776374}(0.013429367,0.0406267){\rput(6.505238,-2.1283584){\large 3}}
\pscircle[linewidth=0.102,linecolor=blue,dimen=outer](8.052656,-4.3571873){0.33}
\usefont{T1}{ptm}{m}{n}
\rput(8.027344,-4.37125){\Large $9$}
\rput{-1.1497928}(0.07220757,0.19119534){\pscircle[linewidth=0.04,dimen=outer](9.563314,-3.5024855){0.33}}
\usefont{T1}{ptm}{m}{n}
\rput{-1.1497928}(0.07220757,0.19119534){\rput(9.538002,-3.5165482){\Large $7$}}
\rput{-1.1497928}(0.09207,0.19742326){\pscircle[linewidth=0.04,dimen=outer](9.883581,-4.489111){0.33}}
\usefont{T1}{ptm}{m}{n}
\rput{-1.1497928}(0.09207,0.19742326){\rput(9.858269,-4.5031734){\Large $8$}}
\psline[linewidth=0.04cm](8.314652,-4.2875876)(9.326893,-3.6877787)
\psline[linewidth=0.04cm](8.32,-4.49)(9.591471,-4.556727)
\usefont{T1}{ppl}{m}{n}
\rput{-1.1497928}(0.08678105,0.1811754){\rput(9.063498,-4.2316566){\large 3}}
\pscircle[linewidth=0.04,dimen=outer](8.172656,-3.0171876){0.33}
\usefont{T1}{ptm}{m}{n}
\rput(8.147344,-3.03125){\Large $6$}
\psline[linewidth=0.04cm](8.12,-4.03)(8.16,-3.29)
\psline[linewidth=0.04cm](7.82,-4.13)(6.98,-3.09)
\usefont{T1}{ppl}{m}{n}
\rput(7.9990625,-3.6971874){\large 2}
\psarc[linewidth=0.04](8.1,-4.4882812){1.3}{341.97757}{43.640797}
\psarc[linewidth=0.04](8.03,-4.7382812){1.29}{73.03264}{94.55013}
\pscircle[linewidth=0.04,dimen=outer,fillstyle=solid](9.579559,-5.548281){0.14}
\pscircle[linewidth=0.04,dimen=outer](9.279559,-5.088281){0.14}
\psline[linewidth=0.04cm](9.33956,-5.188281)(9.539559,-5.4482813)
\psline[linewidth=0.04cm](9.199559,-5.168281)(8.979559,-5.4482813)
\psline[linewidth=0.04cm](9.479559,-5.548281)(9.079559,-5.548281)
\psdots[dotsize=0.36,dotstyle=asterisk](8.8795595,-5.57)
\rput{-90.0}(11.878282,5.241719){\pscircle[linewidth=0.04,dimen=outer,fillstyle=solid](8.56,-3.3182812){0.14}}
\psline[linewidth=0.04cm](8.55,-3.4182813)(8.55,-3.6382813)
\psdots[dotsize=0.36,dotstyle=asterisk](8.54,-3.87)
\rput{-45.727913}(4.9257364,3.6650093){\pscircle[linewidth=0.04,dimen=outer,fillstyle=solid](6.808639,-4.0081687){0.14}}
\psline[linewidth=0.04cm](6.8712854,-4.086753)(7.02486,-4.24428)
\rput(0.0, 0.0){\psdots[dotsize=0.36,dotangle=44.272087,dotstyle=asterisk](7.179455,-4.417179)}
\psarc[linewidth=0.04](8.05,-4.418281){1.29}{113.7458}{146.16125}
\usefont{T1}{ppl}{m}{n}
\rput(7.365156,-3.905){\Large 1}
\psarc[linewidth=0.04](7.16,-2.8882813){1.3}{41.390053}{78.27685}
\psarc[linewidth=0.04](6.99,-2.9982812){1.29}{90.88374}{144.92729}
\rput{28.04805}(0.049261354,-4.2946925){\pscircle[linewidth=0.04,dimen=outer](8.621806,-2.048734){0.14}}
\rput{28.04805}(-0.09362951,-4.136397){\pscircle[linewidth=0.04,dimen=outer](8.233483,-2.2556274){0.14}}
\psline[linewidth=0.04cm](8.334687,-2.1903763)(8.511198,-2.096334)
\psline[linewidth=0.04cm](7.938117,-2.4243248)(8.132278,-2.3208783)
\rput(0.0, 0.0){\psdots[dotsize=0.4,dotangle=28.04805,dotstyle=asterisk](7.717708,-2.5210414)}
\rput{105.88233}(5.0850697,-8.134398){\pscircle[linewidth=0.04,dimen=outer,fillstyle=solid](5.613942,-2.1471655){0.14}}
\rput{105.88233}(4.2275047,-8.31566){\pscircle[linewidth=0.04,dimen=outer](5.253601,-2.561598){0.14}}
\psline[linewidth=0.04cm](5.333364,-2.4765222)(5.528706,-2.2130048)
\psline[linewidth=0.04cm](5.3524404,-2.616651)(5.6819572,-2.7516272)
\psline[linewidth=0.04cm](5.641309,-2.2433481)(5.7507734,-2.6280785)
\rput(0.0, 0.0){\psdots[dotsize=0.36,dotangle=-254.11766,dotstyle=asterisk](5.826396,-2.8144999)}
\psline[linewidth=0.074cm,arrowsize=0.05291667cm 2.0,arrowlength=1.4,arrowinset=0.4]{<->}(10.78,0.71)(9.82,-0.65)
\psline[linewidth=0.074cm,arrowsize=0.05291667cm 2.0,arrowlength=1.4,arrowinset=0.4]{<->}(5.12,0.79)(5.66,-0.49)
\usefont{T1}{ppl}{m}{n}
\rput(11.585156,2.775){\Large 1}
\rput{-1.1497928}(-0.04028848,0.28665072){\pscircle[linewidth=0.04,dimen=outer](14.263581,2.1508892){0.33}}
\usefont{T1}{ptm}{m}{n}
\rput{-1.1497928}(-0.04028848,0.28665072){\rput(14.238269,2.1368268){\Large $8$}}
\psline[linewidth=0.04cm](14.19716,2.4622848)(14.064814,2.8450174)
\usefont{T1}{ppl}{m}{n}
\rput{-1.1497928}(-0.051011056,0.28923845){\rput(14.379354,2.688517){\large 3}}
\psframe[linewidth=0.04,dimen=outer](15.26,6.41)(0.0,-6.41)
\end{pspicture}
}\end{center}\label{biyectionschrocol}\caption{Biyection between Shr\"oder trees and admissible colorations in a pointed graph.}
\end{figure}
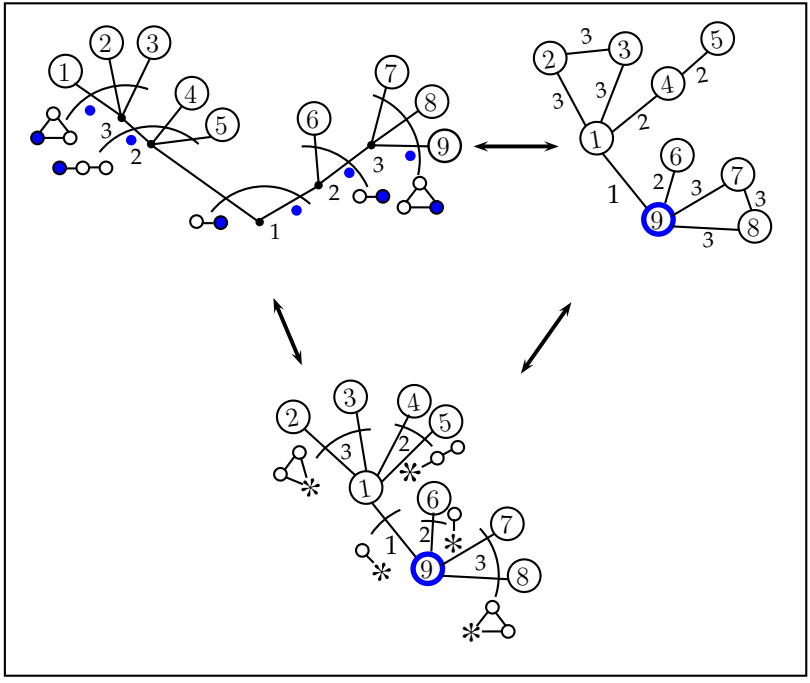

Before studying the antipode for $\CN_{\Gr}$, we introduce the general notion of divisibility on assemblies of $M$-structures, $M$ being a set operad.
 A set-operad $(M,\eta)$ is called a c-operad (cancellative operad) if it satisfies the left cancellation law \cite{PhdMendez, MendezandYang,MendezKoszulduality},
$$\eta(a,m)=\eta(a,m')\Rightarrow m=m'.$$

All the examples of operads treated in this article are c-operads.
The natural transformation $\eta:M(M)\rightarrow M$ can be extended by substitution with the identity of $E$, to $$\overline{\eta}:E(N(N))\rightarrow E(N).$$  The elements of $E(M(M))$ are pairs of assemblies of $M$-structures $(a_2,a'_2)$, where $a_2$ is an assembly over the partition subjacent  to $a_1$.
\begin{defi}
\label{divisibilidad} Let $a_1$ and $a_2$ be two assemblies of $M$-structures over the same set $U$. We say that $a_1$ {\em divides} $a_2$ if there exists a third one $a'_2$ such that
$$\overline{\eta}(a_1,a'_2)=a_2.$$ 
\noindent We denote this divisibility relation by $a_1\preceq_{\eta} a_2$, and by $a_2/a_1$ the assembly $a'_2$. It is unique because of the left cancellation law. The relation $\preceq_{\eta}$ is a partial order over the set $E(M)[U]$. We denote this poset by $P_M[U]$  \cite{MendezandYang, PhdMendez}.
\end{defi} 

\subsection{The Hopf algebra \texorpdfstring{$\Gr$}{Gr}}
The notion of module of a graph \cite{MODULE} plays an important role in the Hopf algebra structure of $\CN_{\Gr}$.
\begin{defi} Let $g$ be a connected graph, and $B$ a subset  the set $U$ of vertices of $g$. The graph $g_B$ generated in $g$ by $B$ is said to be a {\em module} of $g$ if every vertex $v$ in $U-B$, if connected to some vertex in $B$, it is connected to every vertex in $B$. 
\end{defi}
Let $g_1$ be the assembly of connected graphs formed by the union of $g_B$ and the singleton graphs corresponding to the vertices in $U-B$.
It is easy to see that $g_B$ is a module of $g$ if and only if $g_1$ divides $g$. The quotient $g/g_1$ is the graph obtained by contracting in $g$ the block $B$ to a point, 
\begin{itemize}\item The vertices  of $g/g_1$ are the blocks of the partition whose only not singleton block is $B$. \item The edges are either pairs of the form $\{\{x\},\{y\}\}$  with $x,y\in U-B$ and $\{x,y\}\in \mathscr{E}(g)$, or of the form $\{B,\{y\}\}$ with $\{x,y\}\in \mathscr{E}(g)$ for some (all) $x\in B$.
\end{itemize}
The prime elements of $\Gr$ are the connected graphs that do not have modules except the trivial ones. The primitive elements of $\CN_{\Gr}$ are hence completely classified.

\begin{defi}Let $c$ be a coloration of the edges of a connected graph $g$ is  a function from the edges of $g$ to $\PP$. For $i>1$ in the image of $c$, we denote by $g(i)$ the subgraph of $g$ obtained by deleting all the edges of color $j$, $1\leq j<i$. For $i=1$, we make $g(1)=g$.\\
The coloration $c$ is said to be {\em admissible} if it satisfies
\begin{enumerate}
\item \label{dosgr}For every $i\leq 2$ in the image of $c$, $g(i-1)$ divides $g(i)$.
\item \label{unogr}For every vertex $v$ of $g$, if there is an edge incident to $v$ colored $i$, $i\geq 2$, then there is at least one edge  of color $i-1$ incident with $v$.
\end{enumerate}
\end{defi}
\begin{prop}
The antipode of $\CN_{\Gr}$ is given by
\be S(g)=\sum_{c\in \mathrm{Ad}(g)}\prod_i(-1)^{|g(i-1)/g(i)|^{\#} }t_{\tau(g(i-1)/g(i))}.
\eeq
\noindent where $\mathrm{Ad}(g)$ is the set of edge admissible coloration of $g$, and $| g(i-1)/g(i)|^{\#}$ denotes the number of non-singleton connected components of $g(i-1)/g(i)$.
\end{prop}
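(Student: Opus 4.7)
The plan is to apply Theorem 1 to the operad $\Gr$ and translate the sum over $\Gr$-enriched Schr\"oder trees $\TT$ with $\widehat{\eta}(\TT)=g$ into a sum over admissible edge colorations of $g$, following the general coloring technique introduced at the start of Section 5: color each edge of $g$ by the depth $d_v$ of the internal vertex $v\in\Iv(\TT)$ that produces it via the operad product. The heart of the argument is to establish that this assignment $\TT\mapsto c_{\TT}$ is a bijection between $\{\TT\in\mathcal{F}_{\Gr}[U]:\widehat{\eta}(\TT)=g\}$ and $\mathrm{Ad}(g)$.

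First I would check that every $\TT$ gives rise to an admissible coloration. The root pattern is a connected graph on at least two blocks, so at least one edge of color $1$ exists, giving condition (\ref{unogr}). By the recursive description of $\widehat{\eta}$, the connected components of $g(i)$ are exactly the leaf sets of the maximal subtrees of $\TT$ rooted at internal nodes of depth $\geq i$, with singleton components for leaves of shallower depth. Consequently $g(i)$ divides $g(i-1)$ in $P_{\Gr}[U]$, and the quotient $g(i-1)/g(i)$ has as its non-singleton connected components exactly the pattern graphs $\{m_v:d_v=i-1\}$, yielding condition (\ref{dosgr}). Finally, because every pattern $m_v$ is a connected graph on $\pi_v$ with $|\pi_v|\geq 2$, each son of $v$ is incident to at least one edge of $m_v$; hence every vertex $u$ lying in a depth-$i$ subtree acquires edges of color $i-1$ to the vertices of some sibling subtree, which gives the incidence condition on each vertex.

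Conversely, given $c\in\mathrm{Ad}(g)$ I would reconstruct $\TT$ inductively: the root pattern is the non-singleton part of the quotient $g(1)/g(2)$, the sons of the root are indexed by the connected components of $g(2)$, and the construction is then applied recursively to each non-singleton component with the coloration shifted down by one. The admissibility conditions guarantee at each step that the quotient is a well-defined $\Gr$-structure on at least two blocks (so a legitimate decoration of an internal Schr\"oder vertex) and that the tree built in this way satisfies $\widehat{\eta}(\TT)=g$.

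Under this bijection, internal vertices of $\TT$ at depth $i$ correspond to the non-singleton connected components of $g(i-1)/g(i)$, so
\begin{equation*}
|\Iv(\TT)|=\sum_{i\geq 1}|g(i-1)/g(i)|^{\#},\qquad \prod_{v\in\Iv(\TT)}t_{\tau(m_v)}=\prod_{i\geq 1}t_{\tau(g(i-1)/g(i))},
\end{equation*}
and substituting into the formula of Theorem 1 yields the proposition. The main obstacle will be the careful bookkeeping in the reconstruction: one must show that condition (\ref{unogr}) (on incident edges at each vertex) is exactly the combinatorial content of ``each internal node of $\TT$ carries a \emph{connected} pattern on at least two blocks,'' since without it the inverse construction would attempt to build internal vertices with disconnected or trivial patterns, falling outside $\Gr_{2^+}$.
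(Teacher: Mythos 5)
Your proposal is correct and follows essentially the same route as the paper: both encode each $\Gr$-enriched Schr\"oder tree $\TT$ with $\widehat{\eta}(\TT)=g$ as an edge coloration by the depth of the internal vertex creating each edge, verify the two admissibility conditions, invert the construction level by level, and then substitute into the general antipode formula of Theorem 1. The only blemishes are cosmetic (a harmless off-by-one in matching depth-$i$ vertices to the quotient $g(i-1)/g(i)$, and attributing the ``at least one edge of color $1$'' observation to the wrong condition label), neither of which affects the argument.
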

\begin{proof}
We are going to codify the trees in the general formula (\ref{antipode}) into admissible colorations. For a Shr\"oder tree $\TT$ such that $\widehat{\eta}(\TT)=g$, we color $i$ the edges obtained when applying the products on $\TT$ corresponding to vertices of depth $i$. The coloration $c$ so obtained satisfies condition \ref{dosgr} because the non-singleton connected components of $g(i)$ are the graphs $\widehat{\eta}(\TT_v)$, where $v$ runs over the internal vertices of $\TT$ of depth $i$, and $\TT_v$ is the sub-tree of the descendants of $v$. By the recursive definition of $\widehat{\eta}$, $$g(i-1)=\overline{\eta}(g(i),\{g_v|d(v)=i-1\}),$$ \noindent $\{g_v|d(v)=i-1\}$ being the graphs enriching $\TT$ at depth $i-1$. Condition \ref{unogr} is satisfied because each internal vertex of depth $i$, $i\geq 2$, is the son of a vertex of depth $i-1$ which has at least two sons.  The reverse construction is as follows. Place at the root of the tree the graph $g(1)/g(2)$, then create $|g(2)|^{\#}$ vertices and edges connecting them with the root. Use the non-singleton connected graphs of $g(2)/g(3)$, to enrich the newly created $| g(2)|^{\#}=|g(2)/g(3)|^{\#}$ vertices with the non-singleton connected components of $g(2)/g(3)$. This procedure continues recursively on each of those vertices as roots until we reach $g(k)$, $k$ being the maximum of the image of $c$, and finish by placing the non-singleton components of $g(k)$. Condition \ref{unogr} assures that all the internal vertices of the tree obtained in this way have at least two descendants.
\end{proof}
For example, the antipode in formula (\ref{casikcuatro}) is readily computed from the admissible colorations in Fig. \ref{coloracionesgr}.
\begin{figure}
\begin{center}






\scalebox{1} 

{

\begin{pspicture}(0,-0.803125)(7.603802,0.803125)

\psdots[dotsize=0.12](2.343948,0.4748996)

\psdots[dotsize=0.12](3.1834223,0.4748996)

\psdots[dotsize=0.12](2.343948,-0.48622116)

\psdots[dotsize=0.12](3.1834223,-0.48622116)

\psline[linewidth=0.04cm](2.343948,0.41836306)(3.1414487,-0.42968467)

\psline[linewidth=0.04cm](3.1834223,0.36182654)(3.1834223,-0.3731481)

\psline[linewidth=0.04cm](2.3859217,0.4748996)(3.0994751,0.4748996)

\psline[linewidth=0.04cm](2.343948,0.36182654)(2.343948,-0.42968467)

\psline[linewidth=0.04cm](2.3859217,-0.48622116)(3.0994751,-0.48622116)

\usefont{T1}{ptm}{m}{n}

\rput(2.747934,0.631875){\footnotesize 2}

\usefont{T1}{ptm}{m}{n}

\rput(2.2279341,-0.048125){\footnotesize 2}

\usefont{T1}{ptm}{m}{n}

\rput(3.2596529,0.031875){\footnotesize 1}

\usefont{T1}{ptm}{m}{n}

\rput(2.7276735,-0.658125){\footnotesize 1}

\usefont{T1}{ptm}{m}{n}

\rput(2.8076737,0.121875){\footnotesize 1}

\usefont{T1}{ptm}{m}{n}

\rput(7.4845834,0.011875){\footnotesize 1}

\usefont{T1}{ptm}{m}{n}

\rput(6.9445834,-0.668125){\footnotesize 1}

\psdots[dotsize=0.12](6.5630255,0.46663132)

\psdots[dotsize=0.12](7.4025,0.46663132)

\psdots[dotsize=0.12](6.5630255,-0.4944894)

\psdots[dotsize=0.12](7.4025,-0.4944894)

\psline[linewidth=0.04cm](6.5630255,0.4100948)(7.360526,-0.43795294)

\psline[linewidth=0.04cm](7.4025,0.3535583)(7.4025,-0.38141635)

\psline[linewidth=0.04cm](6.6049995,0.46663132)(7.3185525,0.46663132)

\psline[linewidth=0.04cm](6.5630255,0.3535583)(6.5630255,-0.43795294)

\psline[linewidth=0.04cm](6.6049995,-0.4944894)(7.3185525,-0.4944894)

\usefont{T1}{ptm}{m}{n}

\rput(7.0525,0.121875){\footnotesize 2}

\usefont{T1}{ptm}{m}{n}

\rput(6.9042187,0.641875){\footnotesize 1}

\usefont{T1}{ptm}{m}{n}

\rput(6.4245834,-0.068125){\footnotesize 1}

\usefont{T1}{ptm}{m}{n}

\rput(5.3699565,-0.013125){\footnotesize 2}

\usefont{T1}{ptm}{m}{n}

\rput(4.8299565,-0.653125){\footnotesize 2}

\psdots[dotsize=0.12](4.42657,0.46989956)

\psdots[dotsize=0.12](5.2660446,0.46989956)

\psdots[dotsize=0.12](4.42657,-0.49122116)

\psdots[dotsize=0.12](5.2660446,-0.49122116)

\psline[linewidth=0.04cm](4.42657,0.41336307)(5.2240705,-0.43468466)

\psline[linewidth=0.04cm](5.2660446,0.35682654)(5.2660446,-0.3781481)

\psline[linewidth=0.04cm](4.4685435,0.46989956)(5.182097,0.46989956)

\psline[linewidth=0.04cm](4.42657,0.35682654)(4.42657,-0.43468466)

\psline[linewidth=0.04cm](4.4685435,-0.49122116)(5.182097,-0.49122116)

\usefont{T1}{ptm}{m}{n}

\rput(4.85329,0.126875){\footnotesize 1}

\usefont{T1}{ptm}{m}{n}

\rput(4.81329,0.626875){\footnotesize 1}

\usefont{T1}{ptm}{m}{n}

\rput(4.3013105,-0.023125){\footnotesize 1}

\psdots[dotsize=0.12](0.20103984,0.4816313)

\psdots[dotsize=0.12](1.0405142,0.4816313)

\psdots[dotsize=0.12](0.20103984,-0.47948942)

\psdots[dotsize=0.12](1.0405142,-0.47948942)

\psline[linewidth=0.04cm](0.20103984,0.4250948)(0.9985405,-0.42295292)

\psline[linewidth=0.04cm](1.0405142,0.3685583)(1.0405142,-0.36641636)

\psline[linewidth=0.04cm](0.2430136,0.4816313)(0.95656693,0.4816313)

\psline[linewidth=0.04cm](0.20103984,0.3685583)(0.20103984,-0.42295292)

\psline[linewidth=0.04cm](0.2430136,-0.47948942)(0.95656693,-0.47948942)

\usefont{T1}{ptm}{m}{n}

\rput(0.58421874,0.636875){\footnotesize 1}

\usefont{T1}{ptm}{m}{n}

\rput(0.52421874,-0.663125){\footnotesize 1}

\usefont{T1}{ptm}{m}{n}

\rput(1.1442188,-0.043125){\footnotesize 1}

\usefont{T1}{ptm}{m}{n}

\rput(0.62421876,0.156875){\footnotesize 1}

\usefont{T1}{ptm}{m}{n}

\rput(0.04421875,-0.023125){\footnotesize 1}

\end{pspicture} 
}\end{center} \caption{Example of admissible colorations on a graph.}\label{coloracionesgr}
\end{figure}
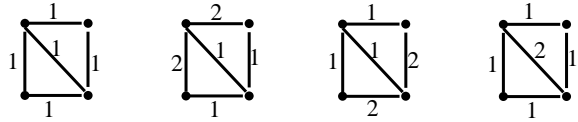

\section{The natural reduced incidence Hopf algebra}
We now make a brief sketch of the construction in \cite{PhdMendez} of the natural reduced incidence Hopf algebra.
When $M$ is a $c$-operad, the natural Hopf algebra can be obtained as a reduced incidence algebra from the family of intervals $\mathrm{Int}_M$ of the family of posets $P_M[U]$, $U$ a finite set. Those intervals are of the form $[a_1,a_2]$, $a_1$ and $a_2$ a pair of assemblies of $M$-structures over the same set. The family $ \mathrm{Int}_M$ is (module poset isomorphism) closed under poset product and by taking subintervals (in the terminology of \cite{Schmitt2}, it is a hereditary family).

We define the equivalence relation $\sim $ in $\mathrm{Int}_M$ by,
\be\label{natural}[a_1,a_2]\sim [a_1', a_2']\mbox{ iff } \tau(a_2/a_1)=\tau(a'_2/a'_1).\eeq 

\noindent Then, every interval in $\mathrm{Int}_M$ is equivalent to one of the form $[\hat{0},a]$. The types of the intervals can be identified with isomorphism types of assemblies of $M$-structures, and hence with monomials $$\widetilde{[\hat{0},a]}=\prod_{B\in \pi} t_{\tau(m_B)}. $$\\ The  equivalence relation $\sim$ is order and poset product compatible (a Hopf relation). According to Schmitt \cite{Schmitt2},  the algebra generated by the types ,  $\KK[t_{\alpha}|\alpha\in \Tt(M)] $ has a structure of bialgebra with coproduct and counity
\begin{eqnarray*}
\Delta (t_{\tau(m)})&=&\Delta \widetilde{[\hat{0},\{m\}]}= \sum_{\hat{0}\preceq_{\eta}a\preceq_{\eta}\{m\}}\widetilde{[\hat{0},a]}\otimes \widetilde{[a,\{m\}]}\\&=&\sum_{\hat{0}\preceq_{\eta}a\preceq_{\eta}\{m\}}\widetilde{[\hat{0},a]}\otimes \widetilde{[\hat{0},\{m\}/a]}=\sum_{\eta(a,m')=m}t_{\tau(a)}\otimes t_{\tau(m')}
\\ \epsilon(\widetilde{[a_1,a_2]})&=&\begin{cases}1&\mbox{ if $a_1=a_2$}\\0&\mbox{ otherwise.}\end{cases}\end{eqnarray*}

\noindent Identifying the type of the singleton intervals with $1$, this reduced incidence algebra becomes a Hopf algebra, which is clearly isomorphic to $\CN_M$. In a similar way, the poset isomorphism relation on $\mathrm{Int}_M$ gives rise to the standard reduced incidence Hopf algebra $\CH_N$ \cite{Chapoton-Livernet}. Since the relation $\sim$ implies poset isomorphism, we have the epimorphism of Hopf algebras
 \begin{eqnarray*}
P:\CN_M&\twoheadrightarrow &\CH_M,\\
P(\widetilde{[\hat{0},a]})&=&\widehat{[\hat{0},a]},
\end{eqnarray*}
  \noindent where  the hat in the right hand side denotes poset isomorphism type.

\subsection{\texorpdfstring{$\CN_{\Arb}$}{NA} and \texorpdfstring{$\HCK$}{HCK}.}

Denote by $\HCK$ the Connes and Kreimer Hopf algebra \cite{ConnesKreimer}.
$\HCK$, like $\CN_{\Arb}$, has as generators unlabelled rooted trees. Its identity $1$ is thought of as the empty tree. We describe its coproduct by:
\be \label{cock}\Delta_{{\mathrm CK}}(t_{\tau(T)})=\sum_{b_v}t_{\tau(a_{b_v}(2))}\otimes t_{\tau(T_{b_v}(1))},
\eeq

\noindent where the sum of above is over all the bicoloring $b_v:V(T)\rightarrow \{1,2\}$ of the vertices of $T$, such that the subgraph generated by the vertices of color $1$, $T_{b_v}(1)$, is a sub-rooted tree of $T$. $a_{b_v}(2)$ is the forest generated by the vertices of color $2$. This description of the coproduct is equivalent to the usual one in terms of admissible cuts.

Let $T_1,T_2,\dots, T_k$ be a forest of trees in $E(\Arb)[U]$ for some set of vertices $U$.
Denote by $T=[T_1,T_2,\dots,T_k]$ the labelled tree obtained by adding an edge from the root of each of the trees in the forest, to a vertex not in $U$, which becomes the root.
Define $B^-(t_{\tau(T)})=t_{\tau(T_1)}t_{\tau(T_2)}\dots t_{\tau(T_k)}$ and $B^+(t_{\tau(T_1)}t_{\tau(T_2)}\dots t_{\tau(T_k)})=t_{\tau(T)}$.
 
In \cite[Theorem 6.11]{Chapoton-Livernet} an elegant proof of the isomorphism between the Hopf algebras $\CH_{\Arb}$ and $\HCK$ was given, based upon the universal property of the pair $(\HCK, B^+)$. On the other hand we have the epimorphism $P:\CN_{\Arb}\rightarrow \CH_{\Arb}$, that composed with Chapoton-Livernet isomorphism gives us the Hopf algebra epimorphism $B^{-}=\hat{P}:\CN_{\Arb}\rightarrow \HCK$. Here we give a direct combinatorial proof of this fact. 

\begin{prop} The multiplicative extension of
$B^-$ is a Hopf algebra epimorphism,
$$B^-:\CN_{\Arb}\twoheadrightarrow \HCK.$$  
\end{prop}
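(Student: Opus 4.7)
The strategy is to verify $B^-$ is a bialgebra morphism; the Hopf algebra morphism property (antipode compatibility) then follows automatically by uniqueness of the antipode. $B^-$ is an algebra map by construction, and it is surjective since every generator $t_{\tau(T_1)}\cdots t_{\tau(T_k)}$ of $\HCK$ is $B^-$ of $t_{\tau([T_1,\ldots,T_k])}$. Counit preservation is trivial: $B^-(1)=1$, and $B^-(t_{\tau(T)})$ has no constant term for $T\ne\bullet$. The substantive content is coproduct compatibility, which I plan to verify on a generator $t_{\tau(T)}$ with $T=[T_1,\ldots,T_k]$ rooted at $r$, by matching both sides term by term via a bijection between colorings.

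Concretely, I would build a bijection $\Psi$ between the valid $\Arb$-edge-bicolorings $b:\mathscr{E}(T)\to\{1,2\}$ used in $\Delta_{\Arb}$ (whose color-$1$ edges induce a subrooted tree of $T$) and the tuples $(c_1,\ldots,c_k)$ of valid CK-vertex-bicolorings $c_i:V(T_i)\to\{1,2\}$ used in $\Delta_{CK}$ (whose color-$1$ vertices form a subrooted tree of $T_i$). Define $\Psi(b)=(c_1,\ldots,c_k)$ by
\[
c_i(\mathrm{root}(T_i))=b(r,\mathrm{root}(T_i)),\qquad c_i(v)=b(v,\mathrm{parent}_{T_i}(v))\ \text{for } v\ne\mathrm{root}(T_i).
\]
Validity of each $c_i$ is automatic: for $v\in V(T_i)$ to lie in $T_b(1)$, the entire edge-path from $v$ up to $r$ must be color $1$ (since $T_b(1)$ is a connected subgraph containing $r$), which gives the subrooted-tree condition on $c_i$; and if the spine edge $(r,\mathrm{root}(T_i))$ is color $2$, then connectedness of the color-$1$ subgraph forces every edge of $T_i$ to be color $2$, matching the forced all-color-$2$ vertex bicoloring when $c_i(\mathrm{root}(T_i))=2$. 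The inverse of $\Psi$ uses the same recipe read in reverse.

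Under $\Psi$ I would then identify the two tensor factors of each summand. Setting $J=\{i:c_i(\mathrm{root}(T_i))=1\}$, the tree $T_b(1)=[T_i^{c_i}(1)]_{i\in J}$ has root $r$, so $B^-(t_{\tau(T_b(1))})=\prod_{i\in J}t_{\tau(T_i^{c_i}(1))}=\prod_i t_{\tau(T_i^{c_i}(1))}$, where the factors with $i\notin J$ equal $1$; this matches the right-hand tensor factor of $\Delta_{CK}\circ B^-$. For the left factor, the partition underlying $a_b(2)$ consists of one big block $B_0=\{r\}\cup\bigcup_{i\notin J}V(T_i)$ with internal tree $[T_i]_{i\notin J}$ rooted at $r$, plus, for each $i\in J$ and each color-$1$ vertex $v$ of $T_i^{c_i}(1)$, a block $C_v$ consisting of $v$ together with the maximal color-$2$ subtrees hanging from $v$'s color-$2$ children, rooted at $v$. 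Applying $B^-$ strips each such color-$1$ root: from $B_0$ one gets $\prod_{i\notin J}t_{\tau(T_i)}=\prod_{i\notin J}t_{\tau(a_{c_i}(2))}$ (each $c_i$ with $i\notin J$ being all-color-$2$), and from the $C_v$'s one gets the product of the $t_{\tau(S)}$ over color-$2$ subtrees $S$ rooted at color-$2$ children of color-$1$ vertices in $T_i$, which is exactly $t_{\tau(a_{c_i}(2))}$. Combining, $B^-(t_{\tau(a_b(2))})=\prod_i t_{\tau(a_{c_i}(2))}$, so the identity $\Delta_{CK}\circ B^-=(B^-\otimes B^-)\circ\Delta_{\Arb}$ holds.

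The main obstacle is the final identification of the left tensor factors: each color-$2$ block in the $\Arb$ factorization carries along one color-$1$ vertex as its root, while the CK forest records only color-$2$ vertices. The perfect reconciliation is that $B^-$ itself strips the root, so its application to the $\Arb$-blocks yields precisely the rooted components of the CK forest.
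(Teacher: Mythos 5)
Your proposal is correct and takes essentially the same route as the paper's proof: the heart of both arguments is the bijection that transfers each edge's color to its endpoint farthest from the root (turning the edge-bicolorings of $\Delta$ into the vertex-bicolorings of $\Delta_{\mathrm{CK}}$), together with the observation that $B^-$ strips the distinguished root off each color-$2$ block so that the two tensor factors match. The only difference is organizational: the paper pre-composes with $B^+$ and checks the identity on the planted tree $[T]$, whereas you verify it directly on an arbitrary generator by decomposing $T$ into its root branches, which if anything treats the non-planted generators more explicitly.
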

 \begin{proof}
$B^-$ obviously preserves identities and coidentities. Denote by $\Delta$ the coproduct of $\CN_{\Arb}$.  We will prove now that $B^-$ is a comultiplicative map,
$$ B^-\otimes B^-\circ \Delta=\Delta_{{\mathrm CK}}\circ B^-. $$
By multiplicativity of the maps involved, it is enough to prove that for every tree $T$ 
\be\label{coalgebraT}  (B^-\otimes B^- \circ  \Delta)(t_{\tau(T)})
=\Delta_{{\mathrm CK}}\circ B^- (t_{\tau(T)}). \eeq
Since $B^+$ is the inverse of $B^-$ when restricted to the vector space generated by the monomials $t_{\tau(T)}$, Eq. (\ref{coalgebraT}) is equivalent to
\be\label{coalgebraT1}  (B^-\otimes B^-\circ \Delta\circ B^+) (t_{\tau(T)})=\Delta_{{\mathrm CK}}(t_{\tau(T)}).\eeq
 Observe that $[T]$ is the planted tree obtained by attaching to $T$ an extra vertex as root. Since $B^+ t_{\tau (T)}=t_{\tau([T])}$, the right hand side of Eq. (\ref{coalgebraT1}) is equal to 
$$(B^-\otimes B^-\circ\Delta)t_{\tau[T]}=\sum_{b_e:\mathscr{E}([T])\rightarrow \{1,2\}} B^- t_{\tau(a_{b_e}(2))}\otimes B^- t_{\tau([T]_{b_e}(1))}.$$
\noindent The above sum is over the edge bicolorings $b_e$
of $[T]$ as in the Eq. (\ref{coproductnap}). There is a bijection between edge bicolorings of $[T]$ and  vertex bicolorings of $T$, $b_e\leftrightarrow b_v$, $b_v$ obtained from $b_e$ by `moving up' the color of each edge to  its endpoint vertex furthest from the root. We can easily check
$$B^-t_{\tau (a_{b_e}(2))}=t_{\tau (a_{b_v}(2))}\mbox{, and }
B^-t_{\tau[T]_{b_e}(1)}=t_{\tau (T_{b_v}(1))},$$
\noindent which finishes the proof. 

\end{proof}
\bibliographystyle{amsplain}

\providecommand{\bysame}{\leavevmode\hbox to3em{\hrulefill}\thinspace}
\providecommand{\MR}{\relax\ifhmode\unskip\space\fi MR }
\providecommand{\MRhref}[2]{%
  \href{http://www.ams.org/mathscinet-getitem?mr=#1}{#2}
}
\providecommand{\href}[2]{#2}


\begin{thebibliography}{10}

\bibitem{LibroMarcelo}
M.~Aguiar, \emph{Monoidal functors, species and {H}opf algebras}, vol.~29, CRM
  Monogr. Ser., 2010.

\bibitem{B-L-L}
F.~Bergeron, P.~Leroux, and G.~Labelle, \emph{Combinatorial species and
  tree-like structures}, vol.~67, Encyclopedia Math. Appl., 1998.

\bibitem{MODULE}
P.~Bonizzoni and R.~McConnell, \emph{Nesting of prime substructures in $k$-ary
  relations}, Theoret. Comput. Sci. \textbf{259} (2001), 341--357.

\bibitem{Chapoton-Livernet}
F.~Chapoton and M.~Livernet, \emph{Relating two {H}opf algebras built from an
  operad}, Int. Math. Res. Not. IMRN \textbf{24} (2007).

\bibitem{ArbolesChen}
W.~Chen, \emph{A general biyective algorithm for trees}, Proc. Nat. Acad. Sci.
  USA \textbf{87} (1990), 9635--9639.

\bibitem{ConnesKreimer}
A.~Connes and D.~Kreimer, \emph{Hopf algebras, renormalization and
  nonconmutative geometry}, Commun. Math. Phys. \textbf{199} (1998), 203--242.

\bibitem{Ehrenborg-Mendez}
R.~Ehrenborg and M.~M\'endez, \emph{Schr\"oder parenthesizations and
  chordates}, J. Comb. Theory Ser. A \textbf{67} (1994), 127--139.

\bibitem{Getzler}
E.~Getzler, \emph{Operads and moduli spaces of genus 0 {R}iemann surfaces}, The
  moduli space of curves (Texel Island, 1994), Progr. Math., vol. 129,
  Birkhauser Boston, Boston MA, 1995, pp.~199--230.

\bibitem{Ginzburg-Kapranov}
V.A. Ginzburg and M.M. Kapranov, \emph{Koszul duality for operads}, Duke Math.
  J. \textbf{76} (1994), 203--272.

\bibitem{Haimann-Schmitt}
M.~Haiman and W.~Schmitt, \emph{Incidence algebra antipodes and {L}agrange
  inversion in one and several variables}, J. Combin. Theory Ser. A \textbf{50}
  (1989), 172--185.

\bibitem{Joni-Rota}
S.A. Joni and G-C. Rota, \emph{Coalgebras and bialgebras in combinatorics},
  Stud. Appl. Math. \textbf{61} (1979), 93--139.

\bibitem{Joyal1}
A.~Joyal, \emph{Une th\'eorie combinatoire des series formelles}, Adv. Math.
  \textbf{42} (1981), 1--82.

\bibitem{Livernet}
M.~Livernet, \emph{A rigidity theorem for pre-{L}ie algebras}, J. Pure Appl.
  Algebra \textbf{207} (2006), no.~1, 1--18.

\bibitem{Bruno2}
J-L. Loday and B.~Vallet, \emph{Algebraic operads}, Fundamental Principles of
  Mathematical Sciences, vol. 346, Springer, Heidelberg, 2012.
\begin{sloppypar}
\bibitem{PhdMendez}
M.~M\'endez, \emph{Monoides, c-monoides, especies de {M}\"{o}bius y
  co\'algebras}, Ph.D. thesis, Universidad Central de Venezuela, 1989, see
  electronic version in:
  \url{http://www.ivic.gob.ve/matematicas/documentos/tesisMiguelMendez.pdf}.
\end{sloppypar}

\bibitem{MendezKoszulduality}
\bysame, \emph{Kozul duality for monoids and the operad of enriched rooted
  trees}, Adv. in Appl. Math. \textbf{44} (2010), 261--297.

\bibitem{MendezandYang}
M.~M\'endez and J.~Yang, \emph{M\"obius species}, Adv. Math. \textbf{85}
  (1991), 83--128.

\bibitem{Schmitt2}
W.~Schmitt, \emph{Incidence {H}opf algebras}, J. Pure Appl. Algebra \textbf{96}
  (1994), no.~3, 299--330.

\bibitem{Schroder}
E.~Schr\"oder, \emph{Vier combinatorische probleme}, Z. f\"ur Math. Physik
  \textbf{15} (1870), 361--376.

\bibitem{Bruno1}
B.~Vallette, \emph{Homology of generalized partition posets}, J. Pure Appl.
  Algebra (2001).

\bibitem{Vanderlaan}
P.~Van~der Laan, \emph{Operads, {H}opf algebras and coloured {K}oszul duality},
  Ph.D. thesis, Universiteit Ultrech, 2004.

\end{thebibliography}
\end{document}